 \newtheorem{thm}{Theorem}[section]
 \newtheorem{lem}[thm]{Lemma}
 \newtheorem{prop}[thm]{Proposition}
 \theoremstyle{definition}
 \newtheorem{defn}[thm]{Definition}
 \newtheorem{rem}[thm]{Remark}
 \newtheorem{cla}[thm]{Claim}
 \numberwithin{equation}{section}
\numberwithin{equation}{section}
\newcounter{rom}
\renewcommand{\therom}{(\roman{rom})}
{\end{list}}
\title[Contracting hypersurfaces by  powers of the $\sigma_k$-curvature]%
{Contracting axially symmetric hypersurfaces  by powers of the $\sigma_k$-curvature}
\begin{document}

\author[H. Li]{Haizhong Li}
\address{Department of Mathematical Sciences,
Tsinghua University,  Beijing 100084, P. R. China}
\email{\href{mailto:hli@math.tsinghua.edu.cn}{hli@math.tsinghua.edu.cn}}

\author[X. Wang]{Xianfeng Wang}
\address{School of Mathematical Sciences and LPMC,
Nankai University,
Tianjin, 300071,  P. R. China; Mathematical Sciences Institute,
Australian National University, Canberra,
ACT 2601 Australia}
\email{\href{mailto:wangxianfeng@nankai.edu.cn}{wangxianfeng@nankai.edu.cn, xianfeng.wang@anu.edu.au}}


\author[J. Wu ]{Jing Wu}
\address{School of Mathematical Sciences and LPMC,
Nankai University,
Tianjin 300071,  P. R. China}
\email{\href{mailto:w3024v@gmail.com}{w3024v@gmail.com}}

\begin{abstract}
In this paper, we investigate the contracting curvature flow of closed, strictly convex axially symmetric hypersurfaces  in $\mathbb{R}^{n+1}$ and $\mathbb{S}^{n+1}$  by $\sigma_k^\alpha$, where $\sigma_k$ is the $k$-th elementary symmetric function of the principal  curvatures and $\alpha\ge 1/k$. We prove that for any $n\geq3$ and any fixed $k$ with $1\leq k\leq n$, there exists a constant  $c(n,k)>1/k$ such that that if $\alpha$ lies in the interval $[1/k,c(n,k)]$, then we  have a nice curvature pinching estimate involving the  ratio of the biggest principal curvature to the smallest principal curvature of the flow hypersurface, and we prove that the properly rescaled  hypersurfaces converge exponentially to the unit sphere.
In the case $1<k\le n \le k^2$, we can choose $c(n,k)=\frac{1}{k-1}$. Our results  provide an evidence for the general convergence result without initial curvature pinching conditions.
\end{abstract}

\keywords {contracting curvature flow, high powers of curvature, $k$-th elementary symmetric function, axially symmetric hypersurface, curvature flow in sphere.}

\subjclass[2010]{53C44, 35B40, 35K55.}
\maketitle

\section{Introduction}\label{sec1}
Let $\mathbb{R}^{n+1}(\epsilon)~(\epsilon=0,1,-1)$ be a real space form, i.e., when $\epsilon=0$, $\mathbb{R}^{n+1}(0)=\mathbb{R}^{n+1}$, when $\epsilon=1$, $\mathbb{R}^{n+1}(1)=\mathbb{S}^{n+1}$, and when $\epsilon=-1$, $\mathbb{R}^{n+1}(-1)=\mathbb{H}^{n+1}$.
Let $M$ be a smooth, closed manifold and $X_0:M\to\mathbb{R}^{n+1}(\epsilon)$ be a smooth immersion which is strictly convex,
we consider a smooth family of immersions $X(\cdot ,t):M \times [0,T) \to \mathbb{R}^{n+1}(\epsilon)$ solving the evolution equation
\begin{equation}\begin{aligned}\label{flow}
\left\{
\begin{array}{ll}
\frac{\partial X}{\partial t}(\cdot, t) = -\sigma_k^\alpha(\cdot, t)\nu(\cdot, t),\\
X(\cdot , 0) = X_0 (\cdot),
\end{array}
\right.
\end{aligned}\end{equation}
where $\alpha\geq 1/k$,  $\nu$ is the outer unit normal vector of $M_t=X_t(M)$ and $\sigma_k$ is the $k$-th elementary symmetric function of the principal  curvatures of $M_t$.
In particular, $\sigma_1$ is the mean curvature and $\sigma_n$ is the Gauss curvature.
Throughout this paper, we call \eqref{flow} a $\sigma_k^\alpha$-curvature flow, and we will consider two cases: $\epsilon=0$ and $\epsilon=1$. When $\epsilon=0$ and $k=1$, the flow \eqref{flow} was called $H^\alpha$-flow and studied by Schulze in \cite{SCHULZE2005,SCHULZE2006}. When $\epsilon=0$ and $k=2$, the flow \eqref{flow} corresponds to the flow by powers by the scalar curvature, which was studied by Alessandroni and  Sinestrari in \cite{AS2010}. When $\epsilon=0$ and $k=n$, the flow \eqref{flow} is the flow by powers of the Gauss curvature, which has been well studied, we refer to \cite{Andrews1996,A1999,A2000PJM,AC2012PAMQ,AGN2016,AL2015,BCD,GN2017} and the references therein.
\subsection{Some background of contracting curvature flows in Euclidean space and in sphere}\label{sec1.1}
When the ambient space is Euclidean space, there have been lots of results about contracting curvature flows. For the case $\alpha=k=1$, the flow is the well-known mean curvature flow which is the gradient flow of the area functional. In one of his famous papers, Huisken \cite{HUISKEN1984}  proved that for any convex initial hypersurface $M_0$, there exists a unique smooth solution to the mean curvature flow and the solution contracts to a ``round'' point in finite time. Similar results have been studied by  Chow for the flows by the n-th root of the Gauss curvature \cite{CHOW1985} and the square root of the scalar curvature \cite{CHOW1987} (with an initial pinching condition). Later on, by proving a geometrical pinching estimate,  Andrews \cite{A1994CVPDE} extended the results of Huisken and Chow to a wide class of curvature flows, with speeds given by homogeneous of degree $1$ functions of the principal curvatures and satisfying some natural conditions. In \cite{A2007} and \cite{A2010CVPED},  Andrews proved new powerful pinching estimates and improved the previous results to a much  wider class of curvature flows. In particular,  the results in \cite{A2007}  applied to the flow by square root of the scalar curvature, and Andrews removed the initial pinching condition in \cite{CHOW1987}.
In the previously mentioned papers, the speed functions of the flows are given by homogeneous of degree $1$ functions of the principal curvatures. For the flow by a speed function which is homogeneous of degree $\alpha>1$, there are fewer results. The first celebrated result was proved by  Andrews in \cite{A1999} for Gauss curvature flow, where  Firey's  conjecture that convex surfaces moving by their Gauss curvature become spherical as they contract to points was proved. Guan and Ni \cite{GN2017} proved that convex hypersurfaces in $\mathbb{R}^{n+1}$ contracting by the Gauss curvature flow  converge (after rescaling to fixed volume) to a smooth uniformly convex self-similar solution of  the flow. Andrews, Guan and Ni \cite{AGN2016} extended the results in \cite{GN2017}  to the flow by powers of the Gauss curvature $K^\alpha$ with $\alpha>\frac{1}{n+2}$.
  Brendle, Choi and Daskalopoulos \cite{BCD} proved that round spheres are the only closed, strictly convex self-similar solutions to the $K^{\alpha}$ flow with $\alpha> \frac{1}{n+2}$. Therefore, the generalized Firey's conjecture proposed by Andrews in \cite{Andrews1996} was completely solved, that is, the solutions of the flow by powers of the Gauss curvature  converge to spheres for any $\alpha>\frac{1}{n+2}$.

When the ambient space is the sphere, there are also some interesting results about contracting curvature flows. For the mean curvature flow in the sphere,  Huisken \cite{H1987} proved that if the initial hypersurface (not necessarily convex) satisfies a curvature pinching condition, then either the evolving hypersurfaces  converge uniformly to a single point in finite time, or the flow exists for all time and the evolving
hypersurfaces converge in $C^{\infty}$-topology to a smooth totally geodesic hypersurface.
 Andrews \cite{Andrews2002ICM} proved some optimal results for
 contracting curvature flows of surfaces with positive intrinsic curvature in  $\mathbb{S}^3$ in the sense that the weakest condition is required on the initial
 surfaces, by proving the existence of an optimal fully nonlinear speed function.  Gerhardt \cite{Gerhardt2015} established a dual relation between the contracting curvature flow and the expanding curvature flow for strictly convex hypersurfaces in the sphere by using the Gauss map, and  proved that if the speed function $F$ is homogeneous of degree $1$, concave and inverse-concave, then the flow hypersurfaces will shrink to a point in finite time, if $F$ is strictly concave, or $F=H/n$, then the properly rescaled hypersurfaces converge to the unit sphere exponentially.  Wei \cite{Wei17} proved similar conclusion for the case that $F$ is  homogeneous of degree $1$, concave and $F$ approaches zero on the boundary of the positive quadrant.  McCoy \cite{McCoy2017} proved that in the surface case, if the speed function is a homogeneous of degree $1$ function or the Gauss curvature, then strictly convex surfaces in  $\mathbb{S}^3$ will contract to round points in finite time, and the results were extended to strictly convex axially symmetric case for $n\geq 3$. For the surface case, very recently, Hu, Li, Wei and Zhou \cite{HLWZ2019} proved that the flow by a power of the mean curvature with the power $\alpha\in[1,5]$ and the flow by a power of the Gauss curvature with the power $\alpha\in[1/2,1]$ will both contract strictly convex surfaces in $\mathbb{S}^3$ to  round points in finite time.
\subsection{Two natural questions and the main theorems}\label{sec1.2}
Basing on the generalized Firey's conjecture mentioned above, it is natural to ask the following questions:
\textit{Question 1}. For any fixed $k$ with $1\leq k\leq n-1$, can the solutions of the $\sigma_k^\alpha$-curvature flow \eqref{flow} with closed, strictly convex initial hypersurfaces in $\mathbb{R}^{n+1}$ converge to round spheres after proper rescaling for some $\alpha>\frac{1}{k}$?
\textit{Question 2}. For any fixed $k$ with $1\leq k\leq n$, can the solutions of the $\sigma_k^\alpha$-curvature flow \eqref{flow} with closed, strictly convex initial hypersurfaces in $\mathbb{S}^{n+1}$ converge to round spheres after proper rescaling for some $\alpha>\frac{1}{k}$?

As far as the authors know, the above questions are open.
For \textit{Question 1}, the recent result of Gao, Li and Ma \cite{GLM} that  closed, strictly convex self-similar solutions to the $\sigma_k^{\alpha}$-curvature flow must be round spheres, provides a new understanding of the $\sigma_k^{\alpha}$-curvature flow.
In the case of the $\sigma_1^\alpha$-curvature flow, i.e., the $H^{\alpha}$-flow,  Schulze \cite{SCHULZE2005,SCHULZE2006} showed that for the $H^\alpha$-flow of a closed, strictly convex hypersurface in $\mathbb{R}^{n+1}$ with $\alpha\geq 1$,  if the initial ratio of the biggest and smallest principal curvatures at every point is close enough to $1$, depending only on $\alpha$ and $n$, then this is preserved under  the flow and the evolving hypersurfaces converge to the unit sphere in finite time after rescaling appropriately. In the appendix of \cite{SCHULZE2006}, Schulze and Schn\"urer showed that in the $2$-dimensional case, if $\alpha\in[1,5]$, no initial pinching condition is needed to guarantee that the properly  rescaled surfaces converge to the unit sphere. When $k=2$ and $\alpha>1/2$,  Alessandroni and Sinestrari \cite{AS2010} proved that if the initial hypersurface is strictly convex and satisfies a suitable pinching condition, then the solution shrinks to a point in finite time and converges to a sphere after a proper rescaling.
For flow of convex hypersurfaces by arbitrary speeds which are smooth homogeneous functions of the principal curvatures of degree greater than one, Andrews and McCoy \cite{AM2012TAMS} proved that for smooth strictly convex initial hypersurfaces with the ratio of principal curvatures sufficiently close to $1$ at each point, the flow hypersurfaces remain smooth and strictly convex and converge to round spheres in finite time after proper rescaling.
For \textit{Question 2}, the only related results are the results proved by McCoy \cite{McCoy2017}, Hu, Li, Wei and Zhou  \cite{HLWZ2019} mentioned above. If the initial hypersurface of the sphere is pinched enough, Li and Lv \cite{LILV2019} proved that the flow converges smoothly and exponentially to the unit sphere after suitable rescaling for some homogeneous functions of the principal curvatures of degree greater than one, which include the functions $\sigma_k^{\alpha}$ for $\alpha>1/k$. Li and Lv's result can be  regarded as a counterpart of the result by Andrews and McCoy \cite{AM2012TAMS}.

The aim of this paper is to find appropriate constants $c_0(n,k)>1/k$ and $c_1(n,k)>1/k$ which only depend on $n$ and $k$, such that:
(i)  For any fixed $k$ with $1\leq k\leq n-1$, if $\alpha\in[1/k,c_0(n,k)]$,  then any closed, strictly convex axially symmetric hypersurface   in $\mathbb{R}^{n+1}$  ($n\geq 3$) will contract to a round point under the  $\sigma_k^\alpha$-curvature flow without initial curvature pinching conditions.
(ii)  For any fixed $k$ with $1\leq k\leq n$, if $\alpha\in[1/k,c_1(n,k)]$,  then any closed, strictly convex axially symmetric hypersurface  in $\mathbb{S}^{n+1}$ ($n\geq 3$) will contract to a round point under the  $\sigma_k^\alpha$-curvature flow without initial curvature pinching conditions.
Our results provide an affirmative answer to the questions proposed above in axially symmetric case. More precisely, we prove the following results.

\begin{thm}\label{thm1.1}
Let $X_0:M\to \mathbb{R}^{n+1}$ be a smooth, closed, strictly convex axially symmetric hypersurface, $n\geq 3$, $1\leq k\leq n-1$. Then there exists a unique  smooth solution of
the $\sigma_k^{\alpha}$-curvature flow \eqref{flow} on a maximal finite time interval $[0,T)$ for $\alpha\geq 1/k$. For each $n$ and $k$, there exists a  constant  $c_0(n,k)>\frac{1}{k}$ such that if
$\alpha\in [1/k, c_0(n,k) ]$, then the flow hypersurfaces $M_t=X_t(M)$  are closed,  strictly convex, axially symmetric and converge to a point $q\in\mathbb{R}^{n+1}$ as $t\to T$,
and the rescaled embeddings
\begin{equation*}
\tilde{X}(p,t):= \big((k\alpha+1)\binom{n}{k}^\alpha (T-t)\big)^{-\frac{1}{k\alpha+1}}\big(X(p,t)- q\big)
\end{equation*}
converge exponentially in $C^{\infty}$ to the unit sphere $\mathbb{S}^n$ as $t \to T$.
\end{thm}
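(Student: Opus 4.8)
The plan is to run the standard scheme for fully nonlinear contracting flows, using the axial symmetry to reduce the pinching analysis to a scalar problem on an interval.

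First I would record short-time existence and the preservation of structure. Since $X_0$ is strictly convex, $\sigma_k$ is positive on its principal curvatures and $\partial\sigma_k^\alpha/\partial h_{ij}$ is positive definite, so \eqref{flow} is parabolic and admits a unique smooth solution on a maximal interval $[0,T)$; this is the existence assertion, valid for all $\alpha\ge 1/k$. Preservation of strict convexity follows from the tensor maximum principle applied to the evolution of the Weingarten map, and preservation of axial symmetry follows from uniqueness together with the invariance of \eqref{flow} under the rotations fixing the axis. Sandwiching $M_t$ between two shrinking spheres (inner and outer barriers evolving by \eqref{flow}) shows $T<\infty$, that the $M_t$ contract to a point $q$ as $t\to T$, and that $\min_{M_t}\sigma_k\to\infty$.

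Next I would exploit the symmetry: writing $M_t$ as a rotation of a profile curve, at each point there are only two distinct principal curvatures, a ``rotational'' one $\lambda$ with multiplicity $n-1$ and a ``meridian'' one $\mu$ with multiplicity $1$, so that $\sigma_k=\binom{n-1}{k}\lambda^k+\binom{n-1}{k-1}\lambda^{k-1}\mu$ and the flow is governed by a single quasilinear parabolic equation in one space variable. The heart of the proof is then the pinching estimate. Letting $w:=\max\{\lambda/\mu,\mu/\lambda\}\ge 1$ be the ratio of the biggest to the smallest principal curvature, I would derive the evolution equation for $w$, of the schematic form $\partial_t w=a\,\partial_x^2 w+(\text{first-order terms})+\mathcal R$, and show that at an interior spatial maximum of $w$ the first-order terms are controlled and the reaction term $\mathcal R$ is $\le 0$. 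A direct computation expresses $\mathcal R$ as a positive curvature factor times a polynomial $P_{n,k,\alpha}(w)$; one then \emph{defines} $c_0(n,k)$ to be the supremum of those $\alpha\ge 1/k$ for which $P_{n,k,\alpha}(w)\le 0$ for all $w\ge 1$, and checks $c_0(n,k)>1/k$ by continuity (the umbilic value $w=1$ is a zero of $\mathcal R$ for every $\alpha$, and $\alpha=1/k$ is the degree-one case that is already admissible; in the range $1<k\le n\le k^2$ a sharper bookkeeping gives $c_0(n,k)=\tfrac1{k-1}$). This yields $\max_{M_t}w\le\max_{M_0}w$, a two-sided curvature pinching along the flow requiring no initial smallness.

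Finally I would rescale and pass to the limit. With $\tilde X$ as in the statement—normalized so that the comparison sphere which becomes extinct at time $T$ has radius $1$—the pinching estimate together with the lower speed bound gives uniform two-sided bounds on the rescaled principal curvatures, and Krylov--Safonov and Schauder estimates applied to the scalar equation upgrade these to uniform $C^\infty$ bounds. A refined maximum-principle argument (the reaction term being strictly negative for $w>1$ when $\alpha<c_0(n,k)$, with a more careful analysis at the borderline) then shows $\max_{M_t}w-1$ decays exponentially, which forces $C^\infty$ subconvergence of the rescaled hypersurfaces to a round sphere; an ODE comparison for the inner and outer radii identifies the limit as the unit sphere and yields the exponential rate. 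I expect the main obstacle to be precisely the pinching estimate: isolating the reaction polynomial $P_{n,k,\alpha}$ from a long computation and proving the sign condition $P_{n,k,\alpha}\le 0$ on $[1,\infty)$ for a nonempty interval of $\alpha$ above $1/k$, since for $\alpha>1/k$ the reaction term acquires ``bad'' terms of size $O(\alpha-1/k)$ that must be dominated by the favorable terms surviving from the degree-one case—and it is exactly the axial symmetry, collapsing the Weingarten operator to two eigenvalues and $\mathcal R$ to a one-variable polynomial, that makes this feasible.
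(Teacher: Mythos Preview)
Your overall architecture is right, and your instinct to exploit axial symmetry so that the pinching problem collapses to a one-variable polynomial sign condition matches the paper's philosophy. Two points, however, diverge from the paper in ways that matter.

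First, the pinching quantity. You propose to track $w=\lambda_{\max}/\lambda_{\min}$ directly; the paper instead tracks
\[
G=\sigma_k^{2\alpha}\sum_{i<j}\Bigl(\frac{1}{\lambda_i}-\frac{1}{\lambda_j}\Bigr)^2,
\]
which is a smooth symmetric function of degree $2(k\alpha-1)$. This choice is not cosmetic: that specific homogeneity makes the zero-order terms in the evolution of $G$ vanish identically in $\mathbb{R}^{n+1}$ (Lemma~\ref{lem3.2}), leaving only gradient terms to control; at a critical point those reduce (Lemma~\ref{lem3.4}) to the sextic $Q(x)$ in $x=\lambda_1/\lambda_2$, and $c_0(n,k)$ is defined by the sign of $Q$. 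Your scale-invariant $w$ is not smooth at umbilic points, and even after smoothing the computation is different; more importantly, monotonicity of $G$ immediately yields the \emph{improving} estimate
\[
\frac{\lambda_{\max}}{\lambda_{\min}}+\frac{\lambda_{\min}}{\lambda_{\max}}-2\le \frac{C}{\sigma_k^{2(\alpha-1/k)}},
\]
which forces $w\to 1$ as $t\to T$ since $\sigma_k\to\infty$. Your conclusion $\max_{M_t}w\le\max_{M_0}w$ does not by itself give this, and your later ``refined maximum-principle argument'' for exponential decay has no mechanism to exploit. In the paper the exponential rate falls out for free: the rescaled $\tilde G=G\rho^{2(k\alpha-1)}$ picks up a linear damping term $-2(k\alpha-1)\binom{n}{k}^{\alpha}\tilde G$, and the monotonicity of $G$ handles the rest.

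Second, and this is a genuine gap: your claim that the pinching estimate together with the lower speed bound gives \emph{two-sided} bounds on the rescaled principal curvatures is not justified. The ODE lower bound on the unrescaled $\sigma_k$ (Lemma~\ref{lem3.1}) blows up at some $T_0\ge T$, not necessarily at $T$, so after multiplying by $\rho(t,T)^k\sim (T-t)^{k/(k\alpha+1)}$ one obtains no positive lower bound on $\tilde\sigma_k$. The rescaled equation may therefore degenerate, and Krylov--Safonov does not apply. The paper confronts this head-on: it rewrites the evolution of $\tilde\sigma_k$ in divergence form with a porous-medium-type coefficient (using that $\nabla_i(\dot\sigma_k^{ij})=0$), proves an $L^2$ energy bound for $\nabla(\tilde\sigma_k^{d})$ with $d=\alpha+\tfrac{k-1}{k}$, and then invokes the DiBenedetto--Friedman interior H\"older estimate for degenerate parabolic systems. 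Only after this H\"older continuity can one localize to regions where $\tilde\sigma_k$ is bounded below, recover uniform parabolicity, and bootstrap to $C^\infty$. Without this step your regularity argument for the rescaled flow does not close.
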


\begin{thm}\label{thm1.2}
Let $X_0:M\to \mathbb{S}^{n+1}$ be a smooth, closed, strictly convex axially symmetric hypersurface, $n\geq 3$, $1\leq k\leq n$.  Then there exists a unique  smooth solution of
the $\sigma_k^{\alpha}$-curvature flow \eqref{flow} on a maximal finite time interval $[0,T)$ for $\alpha\geq 1/k$. For each $n$ and $k$, there exists a  constant  $c_1(n,k)>\frac{1}{k}$ such that if
$\alpha\in [1/k,c_1(n,k)]$, then the flow hypersurfaces $M_t=X_t(M)$  are closed,  strictly convex, axially symmetric and converge to a point $q\in\mathbb{S}^{n+1}$ as $t\to T$,
and the properly rescaled hypersurfaces
converge exponentially in $C^{\infty}$ to the unit sphere $\mathbb{S}^n$ as $t$ approaches $T$ in the following sense: We denote by $\Theta(t,T)$ the sphere solution of the flow \eqref{flow} which shrinks to a point when $t\to T$. If we introduce geodesic polar coordinates with center $q$, write the flow hypersurface $M_t$ as a graph of a function $u(p,t)$ over $\mathbb{S}^n$, and define a new time parameter $\tau=-\log{\Theta(t,T)}$, then $\tau$ tends to $\infty$ as $t\to T$ and the rescaled function $\tilde{u}(p,\tau)=u(p,t)\Theta(t,T)^{-1}$ is uniformly bounded and converges exponentially in  $C^{\infty}$ to the constant function $1$ as $\tau\to \infty$.
\end{thm}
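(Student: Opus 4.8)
The plan is to run the now-standard programme for contracting fully nonlinear flows — short-time existence, preservation of the curvature condition, finite-time extinction, a pinching estimate, and a rescaling/convergence argument (compare McCoy \cite{McCoy2017}, Gerhardt \cite{Gerhardt2015}) — taking advantage of the fact that axial symmetry collapses \eqref{flow} to a scalar parabolic equation for the profile curve with only two distinct principal curvatures. First I would record short-time existence and uniqueness: on the positive cone $\sigma_k^{\alpha}$ is a smooth, symmetric, strictly monotone function of the principal curvatures, so \eqref{flow} is parabolic for strictly convex initial data and standard quasilinear parabolic theory gives a unique maximal smooth solution on $[0,T)$. Uniqueness forces each $M_t$ to inherit the axial symmetry of $M_0$, so I may parametrize $M_t$ by its profile curve and work with the two principal curvatures — call them $\kappa$ (multiplicity $1$, the curvature of the profile) and $\mu$ (multiplicity $n-1$) — together with $\sigma_k=\binom{n-1}{k}\mu^{k}+\binom{n-1}{k-1}\kappa\,\mu^{k-1}$ and the evolution equations for $\kappa$ and $\mu$ obtained by specializing the general Weingarten-map evolution under \eqref{flow} to two curvatures and ambient constant $\epsilon=1$.

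Next I would establish the qualitative behaviour, none of which uses the restriction on $\alpha$: the maximum principle applied to the smaller of $\kappa,\mu$ shows that strict convexity is preserved; and comparison with shrinking geodesic spheres of $\mathbb{S}^{n+1}$ (the avoidance principle, valid because the flow is monotone) squeezes $M_t$ between two geodesic spheres whose radii tend to $0$ in finite time, so $T<\infty$, the $M_t$ converge to a single point $q\in\mathbb{S}^{n+1}$, and the curvature blows up.

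The heart of the proof is the pinching estimate, and this is where $c_1(n,k)$ enters. I would introduce a pinching quantity controlling the ratio $\kappa_{\max}/\kappa_{\min}$ (possibly with a correction term built from $\sigma_k$ and the ambient constant) and compute its evolution equation from Step 1. At a spatial maximum of this quantity the gradient terms vanish and the second-order term has a favourable sign, leaving a reaction term that is an explicit algebraic expression in $\kappa,\mu,n,k,\alpha$ through the first and second derivatives of $\sigma_k^{\alpha}$ with respect to the two principal curvatures; the claim is that this reaction term is non-positive — so that the maximum of the pinching quantity is non-increasing and hence bounded by its (finite, but arbitrary) initial value — exactly when $\alpha\in[1/k,c_1(n,k)]$. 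Verifying this reduces to a one-variable inequality in the ratio $\kappa/\mu$ with parameters $n,k,\alpha$; I expect the clean choice $\alpha\le\frac1{k-1}$ to make the inequality transparent when $1<k\le n\le k^{2}$, with a continuity/perturbation argument around $\alpha=1/k$ producing some $c_1(n,k)>1/k$ in general. A point demanding care throughout is the behaviour at the two tips of the profile, where $\kappa=\mu$ and the profile parametrization degenerates: one must check that the pinching quantity extends smoothly across the tips so that the maximum principle genuinely applies there.

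Finally, once $\kappa_{\max}/\kappa_{\min}$ is bounded along the flow, the rescaled hypersurfaces have uniformly bounded and uniformly pinched second fundamental form, so interior estimates for \eqref{flow} control all higher curvature derivatives after rescaling; a subsequential limit is then a strictly convex homothetically shrinking self-similar solution, which must be a round sphere — either by improving the pinching estimate to $\kappa_{\max}/\kappa_{\min}\to 1$ along the flow, or by invoking the self-similar classification of \cite{GLM} applied to the Euclidean blow-up near extinction. Hence $M_t$ contracts to a round point. For the stated exponential $C^{\infty}$ convergence I would introduce geodesic polar coordinates at $q$, write $M_t$ as a graph $u(\cdot,t)$ over $\mathbb{S}^n$, pass to $\tilde u=u\,\Theta(t,T)^{-1}$ and the time $\tau=-\log\Theta(t,T)$, observe that near extinction the ambient-curvature terms in the rescaled equation are negligible, linearize about $\tilde u\equiv 1$, and use the spectral gap of the linearized operator on $\mathbb{S}^n$ (the neutral low modes being absorbed by the freedom in the choice of $q$ and the normalization $\Theta$) to conclude exponential decay of $\tilde u-1$ in every $C^{m}$. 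The main obstacle is unquestionably the pinching estimate of the third step: both the bookkeeping of the evolution equation in the axially symmetric gauge and, above all, the sign analysis of the reaction term that pins down the admissible interval $[1/k,c_1(n,k)]$ for $\alpha$.
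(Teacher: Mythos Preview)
Your outline follows the right general architecture, but there are two genuine gaps that the paper has to work hard to close and that your sketch does not address.

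\textbf{The pinching quantity and the structure of the bad terms.} You propose ``a pinching quantity controlling the ratio $\kappa_{\max}/\kappa_{\min}$ (possibly with a correction term built from $\sigma_k$)'' and assert that at a spatial maximum one is left with a single reaction term whose sign determines $c_1(n,k)$. The paper's choice is the specific function $G=\sigma_k^{2\alpha}\sum_{i<j}(\lambda_i^{-1}-\lambda_j^{-1})^2$, and this choice is not incidental: it is engineered so that the genuine zero-order terms $\dot F^{ml}h_{mr}h^r_l\,\dot G^{ij}h_{ij}+(1-k\alpha)F\,\dot G^{ij}h_{im}h^m_j$ in the evolution of $G$ vanish identically (Lemma~\ref{lem3.2}). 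What remains are two separate obstructions, not one: (i) the second-order ``gradient'' terms $(\dot G^{ij}\ddot F^{ml,rs}-\dot F^{ij}\ddot G^{ml,rs})\nabla_ih_{ml}\nabla_jh_{rs}$, which after using $\nabla G=0$ reduce to a sextic polynomial $Q(x)$ in $x=\lambda_1/\lambda_2$ whose non-positivity defines $c_0(n,k)$; and (ii) the ambient-curvature terms $\epsilon\bigl((1+k\alpha)F\dot G^{ij}g_{ij}-\dot F^{ml}g_{ml}\dot G^{ij}h_{ij}\bigr)$, present only in the sphere, whose non-positivity defines a second threshold $c_2(n,k)$. One then takes $c_1(n,k)=\min\{c_0(n,k),c_2(n,k)\}$. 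Your description conflates these into a single reaction term and does not indicate what quantity would make the algebra tractable; without the specific $G$ above, the zero-order cancellation fails and the computation does not close.

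\textbf{Regularity of the rescaled flow.} Your claim that ``interior estimates for \eqref{flow} control all higher curvature derivatives after rescaling'' hides the main analytic difficulty of the paper. For $\alpha>1/k$ the rescaled equation for $\tilde\sigma_k$ is \emph{degenerate} parabolic wherever $\tilde\sigma_k$ is small, and there is no Harnack inequality available here to produce a positive lower bound on $\tilde\sigma_k$ a priori. The paper circumvents this by rewriting the evolution of $\tilde\sigma_k$ in divergence form (using $\nabla_i\dot\sigma_k^{ij}=0$), proving an $L^2$ gradient bound for $\tilde\sigma_k^{\,d}$ with $d=\alpha+(k-1)/k$, and then invoking the DiBenedetto--Friedman interior H\"older estimate for degenerate parabolic systems. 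Only with this H\"older continuity in hand can one bootstrap: a uniform lower bound on $\tilde\sigma_k$ at one point propagates to a neighbourhood, the equation becomes uniformly parabolic there, and Schauder theory applies. Your route via subsequential limits and the self-similar classification of \cite{GLM} is plausible in spirit, but it too requires uniform $C^\infty$ control of the rescaled hypersurfaces, which is exactly what the degeneracy obstructs. Likewise, the exponential convergence in the paper is not obtained by linearization and a spectral gap; it comes directly from the improved pinching estimate $\lambda_{\max}/\lambda_{\min}-1\le C\sigma_k^{-2(\alpha-1/k)}$, which after rescaling yields $|\mathring{\tilde A}|\le Ce^{-k(\alpha-1/k)\tau}$.
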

\begin{rem}\label{rem1.3}
(i) For  Theorem \ref{thm1.1}, when $k=1,~\alpha=1$, the result is due to Huisken \cite{HUISKEN1984}. When $1<k<n,~\alpha=\frac{1}{k}$, the result is due to Andrews \cite{A2007}.
	(ii) When $\alpha=\frac{1}{k}$, the result in Theorem \ref{thm1.2} is a special case of the results in \cite{Gerhardt2015} or \cite{Wei17}. For this reason, we will consider the case $\alpha\in(1/k,c_{\epsilon}(n,k)]$ ($\epsilon=0,1$) in the proof of the last part of Theorem \ref{thm1.1} and Theorem \ref{thm1.2}.
\end{rem}
\begin{rem}
		Although we can not write down the constants $ c_0(n,k) $ and $c_1(n,k)$ in terms of explicit functions of $n$ and $k$, they can be precisely determined by  applying Sturm's theorem. We list some of the values of $ c_\epsilon(n,k) $  for $\epsilon=0,1$. For example, $c_\epsilon(3,1)=3.64...,~ c_\epsilon(4,1)=2.93...,$ etc.
In the case $1<k\le n \le k^2$, we can choose $ c_\epsilon(n,k)=\frac{1}{k-1}$ for $\epsilon=0,1$.
For general $n$ and $\epsilon=0,1$, we prove that $ c_\epsilon(n,1)\ge 1+\frac{7}{n}$ and $ c_\epsilon(n,k) \geq \frac{1}{k}+\frac{k}{(k-1)n}$ for $k\ge 2$ and $n\geq k^2$. The proof is given in the appendix.
\end{rem}
\subsection{Outline of the proof and organization of the paper}\label{sec1.3}
In \S\ref{sec2}, we give some notations and preliminary results.
\S\ref{sec3} is devoted to  proving a curvature pinching estimate \eqref{3.1}, which is the key step in the proof of our main theorems.
The main idea to prove \eqref{3.1} is to apply the maximum principle to the evolution equation for the quantity defined by
 \begin{equation}\label{defG}
 G=\sigma_k^{2\alpha}\sum_{i<j}(\frac{1}{\lambda_i}-\frac{1}{\lambda_j})^2
 \end{equation}
 under the flow \eqref{flow}, where $\lambda_i$ are the principal curvatures of the flow hypersurfaces.
 This is inspired by \cite{AL2015}, where  Andrews and  the first author considered the evolution of $K^{2\alpha}\sum_{i<j}(\frac{1}{\lambda_i}-\frac{1}{\lambda_j})^2
 $ for the flow by  powers of Gauss curvature. Note that $G$ can be written in the following form: $G=\sigma_k^{2\alpha}\cdot\big((n-1)\sigma_{n-1}^2-2n\sigma_n\sigma_{n-2}\big)/\sigma_n^2$, which is clearly a smooth symmetric curvature function.
We will prove in Theorem \ref{thm3.1} that for $\epsilon=0,1$, $n\geq3$ and any fixed $k$ with $1\leq k\leq n$, there exists a constant $c_{\epsilon}(n,k) $ such that if $\alpha\in[1/k, c_{\epsilon}(n,k) ]$, then the maximum of the quantity $G$
 is non-increasing in time.
 The proof of \eqref{3.1} comprises three steps.
In the first step, we prove a positive lower bound for the
  $\sigma_k$-curvature of the flow hypersurfaces, by applying maximum principle to
  the evolution of $\sigma_k$ under the flow (see Lemma \ref{lem3.1}). Theorem \ref{thm3.1} is the second step. The uniform upper bound on $\sigma_k$ in combination with the uniform  upper bound on $G$ obtained in Theorem \ref{thm3.1} leads to uniform  lower and upper bounds \eqref{3.14} on the ratio of the maximal principal curvature to the minimal principal curvature on the flow hypersurface $M_t$. In the last step, armed with \eqref{3.14}, we obtain \eqref{3.1} by using Theorem \ref{thm3.1} again.
As a consequence of \eqref{3.14}, we obtain that if $\alpha\in[1/k, c_{\epsilon}(n,k) ]$, then
the strict convexity of the flow hypersurface is preserved under the flow \eqref{flow} for $\epsilon=0,1$.
 The proof of Theorem \ref{thm3.1} is given in \S\ref{sec3.2}, and we discuss Euclidean case and the sphere case separately. The gradient terms of the
 evolution of $G$ are same for both cases: $\epsilon=0$, $\epsilon=1$. By a long calculation, we obtain that at a spatial critical point of $G$, the sign
 of the gradient terms is the same as the sign of a sextic polynomial $Q$ defined by \eqref{Q}. By applying Strum's theorem, we can find the desired constant $c_0(n,k) $ such that if $\alpha\in[1/k, c_0(n,k) ]$, then $Q$ is non-positive for all positive variables $x$, which implies that the gradient terms  of the evolution of $G$ are non-positive at any spatial critical point. Since the procedure of applying Strum's theorem to to determine and estimate the constant $c_0(n,k)$ is long and technical, we give the details of this part in the appendix.
 The zero-order terms of the evolution of $G$ for Euclidean case are automatically zero, while the zero-order terms for the sphere case can be proved to be non-positive if $\alpha\in[1/k,c_2(n,k)]$, with $c_2(n,k)$ given by \eqref{c2nk}. We define $c_1(n,k)=\min\{c_0(n,k),~c_2(n,k)\}$. Thus, we have found the constant $c_{\epsilon}(n,k) $ which satisfies that if $\alpha\in[1/k, c_{\epsilon}(n,k) ]$, then both the zero-order terms and gradient terms of the evolution of $G$ at a spatial critical point are  non-positive, so we can apply the parabolic maximum principle to complete the proof of Theorem \ref{thm3.1}.

In \S\ref{sec4}, we complete the proof of Theorems \ref{thm1.1}-\ref{thm1.2}. We already obtained that the maximal existence time $T$ of the flow \eqref{flow} is finite in Lemma \ref{lem3.1}.
By an analogous
argument to that in \cite[\S 3]{SCHULZE2006} (for the case $\epsilon=0$) and \cite[\S 6]{Gerhardt2015} (for the case $\epsilon=1$), the pinching estimate \eqref{3.14} implies an upper bound for the ratio of the outer radius $\rho_+(t)$ to the inner radius $\rho_-(t)$ of the flow hypersurface
$M_t$ for $t\in[0,T)$ in the case $\epsilon=0$ and for $t\in[t_\delta,T)$ for the case $\epsilon=1$, where $t_\delta=T-\delta$ and $\delta$ is sufficiently small.
Then we can use a technique  of Tso \cite{Tso1985} to prove that the $\sigma_k$-curvature remains bounded from above  as long as
the flow \eqref{flow} bounds a non-vanishing volume, which together with the pinching estimate \eqref{3.14} implies a uniform upper bound for the principal curvatures.
Since the flow hypersurface $M_t$ is also uniformly strictly convex, we obtain that the flow \eqref{flow} remains to be uniformly parabolic.
Since the speed function can be written in the form $F=\sigma_k^{\alpha}=(\sigma_k^{1/k})^{k\alpha}$ and $\sigma_k^{1/k}$ is a concave function of the principal curvatures,
we can apply the  H\"{o}lder estimate by Andrews \cite[Theorem 6]{andrews2004fully} (we can also apply the H\"{o}lder estimate in the case of one space dimension in  \cite{lieberman1996second}, since
axially symmetric hypersurface can be written
as a graph on the unit sphere in geodesic polar coordinates and the graph function  has only one space variable) and
parabolic Schauder estimate \cite{lieberman1996second} to get uniform $C^\infty$ estimates of the solution, hence the solution can be extended beyond $T$, which contradicts the maximality of $T$.
Therefore, we obtain that both the inner radius and outer radius converge to $0$ as $t\to T$, so the flow hypersurfaces remain smooth until they shrink to a point.

We deal with the rescaling in Euclidean case and in sphere case in \S\ref{sec4.2} and \S\ref{sec4.3} respectively.
As remarked in Remark \ref{rem1.3}, we consider the case $\alpha\in(1/k,c_{\epsilon}(n,k)]$, where $c_{\epsilon}(n,k)$ is the constant in Theorem \ref{thm3.1}.
In Euclidean case, we rescale the flow hypersurfaces by $\tilde{X}(p,t):= (X(p,t)-q)\rho(t,T)^{-1}$, where $q$ is the point $M_t$ shrinks to,  $T$ is the maximal existence time of the flow \eqref{flow} and $\rho(t,T)$ is the radius of the sphere solution of the flow \eqref{flow} with center $q$ and maximal existence time $T$. We define a new time parameter $\tau$ by \eqref{tau}. We first apply the technique of Tso \cite{Tso1985} to
obtain a uniform upper bound for the $\tilde{\sigma}_k$-curvature of the rescaled hypersurface $\tilde{M}_t$. When $\alpha>1/k$, the coefficient of the second order part $(\dot{\tilde{\sigma}}_k^\alpha)^{ij}\nabla_i\nabla_j\tilde{\sigma}_k$ in the evolution equation of $\tilde{\sigma}_k$ will becomes degenerate if $\tilde{\sigma}_k$ is sufficiently small. Since we don't know of a suitable parabolic  Harnack inequality
for the flow \eqref{flow} to help us to obtain a positive lower bound for $\tilde{\sigma}_k$, we can not apply the H\"{o}lder estimate by Andrews \cite{andrews2004fully} or
the H\"{o}lder estimate in the case of one space dimension in  \cite{lieberman1996second}
 immediately  to get $C^{2,\alpha}$ estimates.
We will apply the interior H\"{o}lder estimates due to DiBenedetto and Friedman \cite{DF1985} to get  H\"{o}lder continuity of $\tilde{\sigma}_k$, by writing the evolution equation of $\tilde{\sigma}_k$-curvature of $\tilde{M}_t$ in a special form.  Finally, we  obtain that the rescaled flow  hypersurfaces converge  in $C^{\infty}$-topology to the unit sphere $\mathbb{S}^n$, by using analogous argument to that in  \cite{SCHULZE2006} and replacing the estimate (2.3) in Theorem 2.6 of \cite{SCHULZE2006} by  our  pinching estimate \eqref{3.1}.
By considering the evolution of the rescaled quantity $\tilde{G}$, we obtain that the maximal principal curvatures approache the minimal principal curvatures  exponentially
fast on the rescaled hypersurfaces. Then the exponential convergence of the rescaled hypersurfaces can be proved by standard arguments as done in \cite{A1994CVPDE} and \cite{SCHULZE2006}.

In the sphere case, we use a similar rescaling to that in \cite{Gerhardt2015}. We denote by $\Theta(t,T)$ the radii of the sphere solution which shrinks to a point as $t\to T$, where $T$ is the maximal existence time of the flow \eqref{flow} with initial hypersurface $M_0$ for $\epsilon=1$.
Let $q\in\mathbb{S}^{n+1}$ be the point that the flow hypersurfaces $M_t$ shrink to as $t$ approaches $T$, we introduce geodesic polar coordinates with center $q$.
We define a new time parameter by $\tau=-\log{\Theta(t,T)}$.
We prove that the rescaled function $\tilde{u}(p,\tau)=u(p,t)\Theta(t,T)^{-1}$   converges exponentially in  $C^{\infty}$ to the constant function $1$ as $\tau\to \infty$.
There are two key steps in the proof. First, due to a similar reason to Euclidean case, we can not apply the Harnack inequality as in \cite{Gerhardt2015} to obtain positive lower bound for $\tilde{\sigma}_k$ and to ensure uniform parabolicity, we use similar method to that in Euclidean case to obtain a uniform upper bound and H\"{o}lder continuity for $\tilde{\sigma}_k=\sigma_k\cdot\Theta(t,T)^k$. Second, we use our key estimate \eqref{3.1}, the bound on the ratio of outer radius to the inner radius \eqref{pinchingrho} together with the uniform upper bound and H\"{o}lder continuity for $\tilde{\sigma}_k$ to prove that $\tilde{u}(\cdot,\tau)$ obeys uniform a priori estimates in $C^{\infty}(\mathbb{S}^n)$ independently of $\tau$. Finally, by a similar argument to that in Section 8 of \cite{Gerhardt2015}, we obtain that $\tilde{u}(p,\tau)$ converges exponentially fast to the constant function $1$ in $C^\infty$-topology as $\tau\to \infty$.

\textbf{Acknowledgments:} The authors would like to thank Professor Ben Andrews and Dr. Yong Wei for their interest and helpful discussions.
The first author was supported in part by NSFC Grant No.11671224, No.11831005 and NSFC-FWO 11961131001.
The second and third authors were supported in part by NSFC Grant No.11571185 and the Fundamental Research Funds for the Central Universities.
X. Wang would also like to express her deep gratitude to the Mathematical Sciences Institute at the Australian National University for its hospitality and to Professor  Ben Andrews for his encouragement and help during her stay in MSI of ANU as a Visiting Fellow, while part of this work was completed.

\section{Notations and preliminaries}\label{sec2}
In this section, we give some notations and preliminary results.
Throughout the paper, we use the Einstein summation convention of sum over repeated indices.
Let $X_t=X(\cdot,t):M_t\to\mathbb{R}^{n+1}(\epsilon)$ be a family of hypersurfaces moving according to the $\sigma_k^\alpha$-curvature flow \eqref{flow}. We  use  $g=\{g_{ij}\}$, $A=\{h_{ij}\}$ and $\mathcal{W}=\{h^i_j\}$ to denote  the components of  induced metric,  the  second fundamental form and the Weingarten map of the hypersurfaces, respectively. In local coordinates $\xi^1,\cdots,\xi^n$, we can write $h_{ij}=-\bar{g}(\bar{\nabla}_{\frac{\partial X}{\partial \xi^i}}\frac{\partial X}{\partial \xi^j},\nu)$, where $\bar{g}$ denotes the metric of $\mathbb{R}^{n+1}(\epsilon)$, $\bar{\nabla}$ denotes the Levi-Civita connection with respect to the metric $\bar{g}$ and $\nu$ is the outer unit normal.  We denote  the principal curvatures of the hypersurface by $\lambda_1,\cdots,\lambda_n$, then the $\sigma_k$-curvature is defined by
\begin{equation}
\sigma_k=\sum_{1\le i_1 \le \cdots \le i_k \le n} \lambda_{i_1} \cdots \lambda_{i_k}.
\end{equation}
When $k=1$, $\sigma_1$ is the mean curvature. When $k=n$, $\sigma_n$ is the Gauss curvature.

\subsection{Properties of symmetric curvature functions}\label{sec2.1}
Let $F=F(A,g)=F(\mathcal{W})=F(\lambda(\mathcal{W}))$ be a smooth, symmetric function of the principal curvatures of a hypersurface $M\in\mathbb{R}^{n+1}(\epsilon)$, $F$ can be considered as a function of $\mathcal{W}=(h_i^j)$ or the principal curvatures  $\lambda(\mathcal{W})=(\lambda_1,\cdots,\lambda_n)$.  We denote by $(\dot{F}^{ml})$ and  $(\ddot{F}^{ml,rs})$ the matrices of the first and second partial derivatives of $F$ with respect to the components of its first arguments:
\begin{equation*}
\begin{aligned}
\frac{\partial }{\partial s}F(A+sB,g)|_{s=0}&=\dot{F}^{ml}B_{ml},\\
\frac{\partial ^2}{\partial s^2}F(A+sB,g)|_{s=0}&=\ddot{F}^{ml,rs}B_{ml}B_{rs}.
\end{aligned}
\end{equation*}
If $A$ is a diagonal with distinct eigenvalues and $B$ is a symmetric matrix, then we have the following relation (cf. \cite{A1994CVPDE})
\begin{equation}\label{2.1}
\ddot{F}^{ml,rs}(A)B_{ml}B_{rs}=~\frac{\partial^2F(\lambda(A))}{\partial\lambda_m\partial\lambda_l}B_{mm}B_{ll}+2\sum_{m<l}\frac{\frac{\partial F}{\partial \lambda_m}-\frac{\partial F}{\partial \lambda_l}}{\lambda_m-\lambda_l}B_{ml}^2.
\end{equation}
The second term in \eqref{2.1} makes sense as a limit if $\lambda_m=\lambda_l$.

If $F$ is a homogeneous of degree $\beta$  function of the principal curvatures $\lambda_1,\cdots,\lambda_n$, we have the following relations by using Euler's Theorem:
\begin{equation}\label{2.2}
\begin{aligned}
\dot{F}^{ij}h_{ij}=\beta F,~\dot{F}^{ij,ml}h_{ij}h_{ml}=\beta(\beta-1)F.
\end{aligned}
\end{equation}
We collect some properties of $\sigma_k$-curvature for later use.

\begin{lem} \label{lem2.1}For $\sigma_k$-curvature function with $1\leq k\leq n$, we have the following properties.
\begin{itemize}
\item[(i)] $\frac{\partial\sigma_k}{\partial\lambda_i}>0$ for all $i=1,\cdots,n$ and $(\lambda_1,\cdots,\lambda_n)\in\Gamma_k$, where $\Gamma_k$ is the connected component of
$\{(\lambda_1,\cdots,\lambda_n)\in\mathbb{R}^n:\sigma_k(\lambda_1,\cdots,\lambda_n)>0\}$ containing the positive cone.
\item[(ii)] $\sigma_k^{1/k}$ is concave and inverse concave in $\Gamma_+$.  We say that a curvature function $f$ is inverse concave, if the dual function of $f$ defined by $f_{*}(x_1,\cdots,x_n)=f(\frac{1}{x_1},\cdots,\frac{1}{x_n})^{-1}$ is concave.
\item[(iii)] $\sigma_k^{1/k}(1,\cdots,1)=\binom{n}{k}^{1/k}$ and $(\dot{\sigma_k^{1/k}})^{ml}h_{mr}h^r_l\geq \frac{\sigma_k^{2/k}}{\binom{n}{k}^{1/k}}$ in $\Gamma_+$.
Consequently,  $$\dot{\sigma_k}^{ml}h_{mr}h_l^r\geq \frac{k}{\binom{n}{k}^{1/k}}(\sigma_k)^{1+\frac{1}{k}},~\dot{(\sigma_k^{\alpha})}^{ml}h_{mr}h_l^r\geq \frac{k\alpha}{\binom{n}{k}^{1/k}}(\sigma_k^{\alpha})^{1+\frac{1}{k\alpha}} ~\text{in} ~\Gamma_+.$$
\item[(iv)]  $\nabla_i\big(\dot{\sigma_k}^{ij}\big)=0$ for any $j=1,\cdots,n$, where $\nabla$ is the Levi-Civita connection of the hypersurface $M\subset\mathbb{R}^{n+1}(\epsilon)$.
\end{itemize}
\end{lem}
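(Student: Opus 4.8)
The plan is to verify the four items in turn, each of which reduces to a classical fact about elementary symmetric polynomials together with, in item (iii), one short computation with the inverse (dual) function. For (i), I would start from the expansion $\sigma_k(\lambda)=\sigma_k(\lambda\mid i)+\lambda_i\,\sigma_{k-1}(\lambda\mid i)$, where $(\lambda\mid i)$ denotes the $(n-1)$-tuple obtained by deleting the $i$-th entry; differentiating in $\lambda_i$ gives $\partial\sigma_k/\partial\lambda_i=\sigma_{k-1}(\lambda\mid i)$. It then suffices to invoke the standard fact (G{\aa}rding; Caffarelli--Nirenberg--Spruck) that on the cone $\Gamma_k$ one has $\sigma_j>0$ for $1\le j\le k$ and, more sharply, $\sigma_{k-1}(\lambda\mid i)>0$ for every $i$, which is exactly the claimed positivity.

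For (ii), the concavity of $\sigma_k^{1/k}$ on the positive cone $\Gamma_+$ is classical and follows, e.g., from the Newton--Maclaurin inequalities or from G{\aa}rding's theory of hyperbolic polynomials, so I would cite a standard reference. For the inverse concavity I would compute the dual function explicitly: from $\sigma_k(\lambda_1^{-1},\dots,\lambda_n^{-1})=\sigma_{n-k}(\lambda)/\sigma_n(\lambda)$ one gets $(\sigma_k^{1/k})_*(\lambda)=\bigl(\sigma_n(\lambda)/\sigma_{n-k}(\lambda)\bigr)^{1/k}$, and concavity of this quotient on $\Gamma_+$ is again well documented; alternatively one may quote directly from the literature on curvature functions that $\sigma_k^{1/k}$ is inverse concave.

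For (iii), the identity $\sigma_k(1,\dots,1)=\binom{n}{k}$ gives the normalization at once. The inequality $(\dot{\sigma_k^{1/k}})^{ml}h_{mr}h^r_l\ge \sigma_k^{2/k}/\binom{n}{k}^{1/k}$ is where the one genuine computation goes: writing $f=\sigma_k^{1/k}$ and diagonalizing $\mathcal W$ with entries $\lambda_i>0$, the left side equals $\sum_i \dot f_i(\lambda)\lambda_i^2$. Differentiating $f_*(x)=f(x^{-1})^{-1}$ at $x=\lambda^{-1}$ gives $\dot{(f_*)}_i(\lambda^{-1})=\dot f_i(\lambda)\lambda_i^2/f(\lambda)^2$; since $f_*$ is concave and homogeneous of degree one, its supporting-hyperplane inequality combined with Euler's relation yields $f_*(1,\dots,1)\le\sum_i\dot{(f_*)}_i(\lambda^{-1})$, and substituting these identities together with $f_*(1,\dots,1)=\binom{n}{k}^{-1/k}$ produces exactly the claim. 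The two ``consequently'' estimates then follow from the chain rule $\dot{(\sigma_k^\beta)}^{ml}=k\beta\,\sigma_k^{\beta-1/k}\,(\dot{\sigma_k^{1/k}})^{ml}$ applied with $\beta=1$ and $\beta=\alpha$, using $\sigma_k^{2/k}\cdot\sigma_k^{\beta-1/k}=\sigma_k^{\beta+1/k}$ and $(\sigma_k^\alpha)^{1+1/(k\alpha)}=\sigma_k^{\alpha+1/k}$.

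For (iv), $\dot{\sigma_k}^{ij}$ is, up to index position, the $(k-1)$-st Newton transformation $T_{k-1}$, and the vanishing of its divergence in a space form is the classical divergence-free property of Newton tensors: it follows by induction on $k$ from the recursion $T_{k-1}=\sigma_{k-1}\,\mathrm{id}-\mathcal W\,T_{k-2}$ together with the Codazzi equation $\nabla_i h_{jl}=\nabla_j h_{il}$, which holds in $\mathbb R^{n+1}(\epsilon)$; I would cite Reilly for this. None of these steps is genuinely difficult; the only parts requiring more than a citation are the dual-function computation in (iii) and, if one wants a self-contained treatment, re-deriving concavity and inverse concavity in (ii). I expect item (ii)---specifically a clean proof of the inverse concavity of $\sigma_k^{1/k}$---to be the most delicate to present, so I would rely on the literature there and keep the explicit work confined to (iii).
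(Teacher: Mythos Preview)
Your proposal is correct and follows essentially the same mathematical route as the paper: the paper's own proof is purely a list of citations (Lieberman for (i)--(ii), Andrews for inverse concavity, Andrews--McCoy--Zheng Lemma~5 for (iii), Reilly for (iv)), and your outlined arguments---in particular the dual-function computation in (iii)---are exactly the content of those cited results. You simply supply the details where the paper only points to references.
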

\begin{proof}
Property (i) can be found in  \cite[Lemma 15.14]{lieberman1996second}. For Property (ii), the concavity of $\sigma_k^{1/k}$ can be found in \cite[Theorem 15.16]{lieberman1996second} and the inverse-concavity of $\sigma_k^{1/k}$ can be found in \cite[\S 2]{A2007}. Property (iii) follows from the inverse-concavity of $\sigma_k^{1/k}$ and Lemma 5 of
\cite{AMZ2013}. Property (iv) is a well-known property for
hypersurfaces in space forms, we refer to Proposition 2.1 of \cite{Reilly1973} and Lemma 3.1 of \cite{CL2007} for the proof.
\end{proof}

\subsection{Graphical representation for star-shaped hypersurfaces in the sphere}\label{sec2.2}
We recall the warped product model of the unit sphere $\mathbb{S}^{n+1}=I\times\mathbb{S}^n$ equipped with the warped product metric
\begin{equation*}
  \bar{g}=dr^2+\sin^2{r}~g_{\mathbb{S}^n},
\end{equation*}
where $I=(0,\pi)$.
Suppose that $M$ is a star-shaped hypersurface in $\mathbb{S}^{n+1}$ and can be expressed as a graph over the sphere $\mathbb{S}^n$, i.e., $ M=\{(u(\theta),\theta),~ \theta\in\mathbb{S}^n\}$ for some function $u\in C^{\infty}(\mathbb{S}^n)$, then the induced metric on $M$ in terms of the coordinates $\theta^j$ is given by
\begin{equation*}
  g_{ij}=u_iu_j+\sin^2{u}~\sigma_{ij},
\end{equation*}
where $\sigma_{ij}=g_{\mathbb{S}^n}(\partial_{\theta^i},\partial_{\theta^j})$ are the components of the round metric $g_{\mathbb{S}^n}$.
The second fundamental form $h_{ij}$ satisfies
\begin{equation*}
  h_{ij}v^{-1}=-u_{ij}+\sin{u}\cos{u}\sigma_{ij},
\end{equation*}
where $u_{ij}$ are the covariant derivatives of $u$ with respect to the induced metric $g_{ij}$ and $v$ is defined by
\begin{equation}\label{s2:v-def}
  v=\sqrt{1+\sin^{-2}u|Du|^2_{g_{\mathbb{S}^n}}}.
\end{equation}
The unit normal vector field on $M$ is given by
\begin{equation*}
  \nu=v^{-1}(\partial_r-\sin^{-2}u\cdot u^j\partial_{\theta^j}),~\text{with}~u^j=\sigma^{ij}u_i.
\end{equation*}
We define
 \begin{equation*}
   \varphi(u)=\int_{u_0}^u \frac 1{\sin{r}}dr,
 \end{equation*}
then $\varphi'(u)=\sin^{-1}u$, $g^{ij}=\sin^{-2}{u}(\sigma^{ij}-\varphi^i\varphi^j/v^2)$, and $h_j^i$ (the components of the  Weingarten map) can be expressed by
\begin{equation}\label{hij}
  h_j^i=v^{-1}\sin^{-1}u\left(-(\sigma^{ik}-v^{-2}\varphi^i\varphi^k)\varphi_{jk}+\cos{u}\delta_j^i\right),
\end{equation}
where $\varphi^i=\sigma^{ik}\varphi_k$, $(\sigma^{ij})=(\sigma_{ij})^{-1}$ and the covariant derivatives are taken with respect to $\sigma_{ij}$.

If $M_t$ is a smooth star-shaped solution of \eqref{flow} for $t\in[0,T)$ and each flow hypersurface is expressed as a graph $M_t=\text{graph } u(\theta,t)$ over the sphere $\mathbb{S}^n$, we can deduce that the defining function $u(\theta,t)$ of $M_t$ satisfies the following scalar parabolic equation (see \cite{Gerhardt2006})
\begin{equation}\label{s2:ut-evl}
  \frac{\partial}{\partial t} u(t)=-\sigma_k^\alpha v,
\end{equation}
on $[0,T)\times \mathbb{S}^n$, where $v$ is the function defined by \eqref{s2:v-def}.
 Let $\chi=\bar{g}(\sin{u}~\partial_r,\nu)$  denote the support function of $M_t$, we have  the following evolution equation (cf. \cite{Gerhardt2015,LWW}).
\begin{equation}\label{s2:evl-chi}
  \frac{\partial}{\partial t}\chi-\mathcal{L} \chi=\dot{(\sigma_k^{\alpha})}^{ml}h_{mr}h_l^r\cdot\chi-(k\alpha+1)\sigma_k^{\alpha}\cos{u}.
\end{equation}

\subsection{Evolution equations of curvature functions}\label{sec2.3}
For  hypersurfaces of $\mathbb{R}^{n+1}(\epsilon)$ moving according to the $\sigma_k^\alpha$-curvature flow \eqref{flow}, we have the following evolution equations (cf.   \cite{A1994R}, \cite{SCHULZE2006},\cite{McCoy2017}):
\begin{equation}\label{2.4}
\frac{\partial}{\partial t}g_{ij}=-2\sigma_k^{\alpha}h_{ij},
\end{equation}
\begin{equation}\label{nu}
\frac{\partial}{\partial t}\nu=\nabla(\sigma_k^{\alpha}),
\end{equation}
\begin{equation}\label{vol}
\frac{\partial}{\partial t}d\mu=-\sigma_k^{\alpha} H d\mu, ~\text{where}~d\mu=\sqrt{\det{g_{ij}}}d\xi^1\wedge\cdots \wedge d\xi^n,
\end{equation}
\begin{equation}\label{2.5}
\frac{\partial}{\partial t}\sigma_k^{\alpha} = \mathcal{L} \sigma_k^{\alpha}+\sigma_k^{\alpha}\dot{(\sigma_k^{\alpha})}^{ml}h_{mr}h_l^r+\epsilon\sigma_k^{\alpha}\dot{(\sigma_k^{\alpha})}^{ij}g_{ij},
\end{equation}
\begin{equation}\label{2.6}
\begin{aligned}
\frac{\partial}{\partial t}h^i_j=&\mathcal{L}h^i_j+\ddot{(\sigma_k^{\alpha})}^{ml,rs}\nabla^i h_{ml}\nabla_j h_{rs}-( k \alpha -1)\sigma_k^{\alpha}h^{im}h_{mj}+\dot{(\sigma_k^{\alpha})}^{ml}h_{mr}h_l^rh_j^i\\
&+\epsilon\big((1+k\alpha)\sigma_k^{\alpha}\delta^i_j-\dot{(\sigma_k^{\alpha})}^{ml}g_{ml}h^i_j\big),
\end{aligned}
\end{equation}
where $\nabla$ is the Levi-Civita connection with respect to the induced metric $g$, $\mathcal{L}= \dot{(\sigma_k^{\alpha})}^{ml}\nabla_m\nabla_l$, and $\nabla_ih_{ml}$ are the covariant derivatives of the second fundamental form.

It follows from Lemma 4.3 of \cite{AM2012TAMS} and Lemma 2.2 of \cite{McCoy2017} (cf. \cite{A1994R}) that  for any smooth symmetric function $G=G(\mathcal{W})=G(\lambda(\mathcal{W}))$, we have the following evolution equation for $G$ under the flow \eqref{flow}.
\begin{equation}\label{2.7}
\begin{aligned}
\frac{\partial}{\partial t} G=&\mathcal{L} G+(\dot{G}^{ij}\ddot{F}^{ml,rs}-\dot{F}^{ij}\ddot{G}^{ml,rs})\nabla_ih_{ml}\nabla_jh_{rs}\\
&+\dot{F}^{ml}h_{mr}h^r_l\dot{G}^{ij}h_{ij}+(1-k\alpha)F\dot{G}^{ij}h_{im}h^m_j\\
&+\epsilon\big((1+k\alpha)F \dot{G}^{ij}g_{ij}-\dot{F}^{ml}g_{ml}\dot{G}^{ij}h_{ij}\big),
\end{aligned}
\end{equation}
where $F$ is the speed function $\sigma_k^{\alpha}$ of the flow \eqref{flow}.

\subsection{Properties of axially symmetric hypersurfaces}\label{sec2.4}
Since the flow \eqref{flow} preserves symmetry, if $\{M_t\}$ is a solution of \eqref{flow} with  an axially symmetric initial hypersurface, then $M_t$ is also an axially symmetric hypersurface for each $t$.
An axially symmetric hypersurface (which is also called rotation hypersurface in the literature, cf. \cite{CD1983}) has at most two different principal curvatures, without loss of generality, we assume that $\lambda_1$ is the ``axial curvature" and $\lambda_2=\cdots=\lambda_n$ are the ``rotational curvatures", and denote the corresponding eigenvectors by $e_1,\cdots,e_n$.  When $n\geq 3$, the only possible nonzero components of the covariant derivatives of the second fundamental form are the following terms (cf. \cite{AHL}, \cite{MMW2015}).
\begin{equation}\label{2.8}
\nabla_1h_{11},~\nabla_1h_{22}=\cdots=\nabla_1h_{nn}.
\end{equation}

\section{The pinching estimates}\label{sec3}
\subsection{A key estimate}\label{sec3.1}
In this section, we will prove that: For any $n\geq3$ and any fixed $k$ with $1\leq k\leq n$, for $\epsilon=0,1$, there exists a constant $c_\epsilon(n,k) $ such that if $\alpha\in[1/k, c_\epsilon(n,k) ]$, then there exists a positive constant $C$ which only depends on the initial hypersurface $M_0$ in $\mathbb{R}^{n+1}(\epsilon)$ such that
\begin{equation}\label{3.1}
0 \le \frac{\lambda_{max}(p,t)}{\lambda_{min}(p,t)}+\frac{\lambda_{min}(p,t)}{\lambda_{max}(p,t)}-2 \le \frac{C}{\sigma_k^{2(\alpha-\frac{1}{k})}(p,t)},
\end{equation}
 for all  $(p,t) \in M \times [0,T)$, where $T$ is the maximal time of the solution of the flow \eqref{flow}. This is the key step in  the proof of Theorems \ref{thm1.1}-\ref{thm1.2}.

First, we prove a uniform positive lower bound for the $\sigma_k$-curvature of the flow hypersurfaces $M_t$.
For strictly convex initial hypersurface, the flow \eqref{flow} is uniformly parabolic  and has a unique smooth solution for at least a short time interval, by short time existence theorem (cf. \cite{HA1999}). By using the evolution equation \eqref{2.5}, we have the following evolution equation for $\sigma_k$,
\begin{equation}\label{sigmakevo}
\begin{aligned}
\frac{\partial}{\partial t}\sigma_k &=\dot{\sigma_k}^{ml}\nabla_m\nabla_l\sigma_k^{\alpha}
+\sigma_k^{\alpha}\dot{\sigma_k}^{ml}h_{mr}h_l^r+\epsilon\sigma_k^{\alpha}\dot{\sigma_k}^{ij}g_{ij}\\
&= \dot{(\sigma_k^{\alpha})}^{ml}\nabla_m\nabla_l\sigma_k+\alpha(\alpha-1)\sigma_k^{\alpha-2}\dot{\sigma_k}^{ml}\nabla_m \sigma_k\nabla_l \sigma_k +\sigma_k^{\alpha}\dot{\sigma_k}^{ml}h_{mr}h_l^r+\epsilon\sigma_k^{\alpha}\dot{\sigma_k}^{ij}g_{ij}.
\end{aligned}
\end{equation}
When $\epsilon=0,1$, by applying the maximum principle, it follows from \eqref{sigmakevo} that the minimum of $\sigma_k$ is increasing under the flow, that is, $\min_{M_t}\sigma_k\geq \min_{M_0}{\sigma_k}>0$.
When $\epsilon=0,1$, $\alpha\geq 1/k$, using the inequality in Lemma \ref{lem2.1} (iii), we obtain
 from \eqref{sigmakevo} that
 \begin{equation}\label{sigmakevo1}
\begin{aligned}
\frac{d}{d t}\min_{M_t}\sigma_k \geq\frac{k}{\binom{n}{k}^{1/k}}(\min_{M_t}\sigma_k)^{\alpha+1+1/k}.
\end{aligned}
\end{equation}
By applying maximum principle to \eqref{sigmakevo1}, for $\epsilon=0,1$, we have the following estimates.

\begin{lem} \label{lem3.1}
  \begin{equation*}
  \sigma_k(p,t)\geq \min_{M_0}{\sigma_k}(\cdot,0)\Big(1-\frac{k\alpha+1}{\binom{n}{k}^{1/k}}(\min_{M_0}{\sigma_k}(\cdot,0))^{\frac{k\alpha+1}{k}} \cdot t\Big)^{-\frac{k}{k\alpha+1}}.
  \end{equation*}
 Consequently, we obtain a finite upper bound for the maximal existence time:
  \begin{equation*}
  T\leq \frac{\binom{n}{k}^{1/k}}{k\alpha+1}(\min_{M_0}{\sigma_k}(\cdot,0))^{-\frac{k\alpha+1}{k}}.
  \end{equation*}
\end{lem}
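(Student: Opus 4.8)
The plan is to reduce Lemma~\ref{lem3.1} to an elementary ODE comparison, since the decisive differential inequality \eqref{sigmakevo1} has already been set up. Put $\phi(t):=\min_{M_t}\sigma_k(\cdot,t)$. Along the smooth flow on the compact manifold $M$, the function $t\mapsto\phi(t)$ is locally Lipschitz, and by the standard lemma of Hamilton on the evolution of the minimum of a smooth time-dependent family, the pointwise inequality behind \eqref{sigmakevo1} yields
\begin{equation*}
\frac{d}{dt}\phi(t)\geq \frac{k}{\binom{n}{k}^{1/k}}\,\phi(t)^{\alpha+1+\frac1k}\quad\text{for a.e. }t\in[0,T),
\end{equation*}
with $\phi(0)=\min_{M_0}\sigma_k>0$ by the strict convexity of $X_0$ together with Lemma~\ref{lem2.1}(i).

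Next I would integrate the comparison equation $\psi'(t)=\frac{k}{\binom{n}{k}^{1/k}}\psi(t)^{\alpha+1+\frac1k}$ with $\psi(0)=\phi(0)$. Separating variables and using $\alpha+1+\frac1k-1=\frac{k\alpha+1}{k}$, one obtains
\begin{equation*}
\psi(t)^{-\frac{k\alpha+1}{k}}=\phi(0)^{-\frac{k\alpha+1}{k}}-\frac{k\alpha+1}{\binom{n}{k}^{1/k}}\,t,
\end{equation*}
hence
\begin{equation*}
\psi(t)=\phi(0)\Big(1-\frac{k\alpha+1}{\binom{n}{k}^{1/k}}\,\phi(0)^{\frac{k\alpha+1}{k}}\,t\Big)^{-\frac{k}{k\alpha+1}}.
\end{equation*}
The scalar comparison principle then gives $\phi(t)\geq\psi(t)$ on their common interval of existence, which is exactly the claimed pointwise lower bound for $\sigma_k(p,t)$ after inserting $\phi(0)=\min_{M_0}\sigma_k$.

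Finally, for the bound on $T$, note that $\psi(t)\to+\infty$ as $t\uparrow T^*$, where $T^*:=\frac{\binom{n}{k}^{1/k}}{k\alpha+1}\,(\min_{M_0}\sigma_k)^{-\frac{k\alpha+1}{k}}$. If we had $T>T^*$, then $M_{T^*}$ would be a smooth, compact hypersurface on which $\sigma_k\geq\psi(T^*)=+\infty$, which is absurd; therefore $T\leq T^*$, as asserted. I do not expect any genuine obstacle in this argument: the only point requiring a little care is the justification of the Lipschitz regularity of $t\mapsto\min_{M_t}\sigma_k$ so that the parabolic maximum principle may be applied in the form \eqref{sigmakevo1}, and this is covered by Hamilton's lemma; the rest is a one-variable computation already foreshadowed by \eqref{sigmakevo}--\eqref{sigmakevo1}.
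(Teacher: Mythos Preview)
Your proof is correct and follows essentially the same route as the paper: the paper derives the differential inequality \eqref{sigmakevo1} and then simply says ``By applying maximum principle to \eqref{sigmakevo1}\ldots we have the following estimates,'' leaving the ODE comparison implicit. You have merely made explicit the separation-of-variables integration and the blow-up argument for the bound on $T$, which is exactly what the paper intends.
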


In order to prove \eqref{3.1},  we need the following Theorem.
 \begin{thm}\label{thm3.1}
 Let $M_t$ be a family of smooth, closed, strictly convex, hypersurfaces in $\mathbb{R}^{n+1}(\epsilon)~(\epsilon=0,1)$, given by the $\sigma_k^\alpha$-curvature flow \eqref{flow}. We assume that $M_0$ is axially symmetric. For any $n\geq3$ and any fixed $k$ with $1\leq k\leq n$, there exists a constant $c_{\epsilon}(n,k) $ such that if $\alpha\in[1/k, c_{\epsilon}(n,k) ]$, then the maximum of the quantity $G$ (see \eqref{defG})
\begin{equation*}
 \max_{M_t} G = \max_{M_t}\{ \sigma_k^{2\alpha}\sum_{i<j}(\frac{1}{\lambda_i}-\frac{1}{\lambda_j})^2\}
\end{equation*}
 is non-increasing in time.
 \end{thm}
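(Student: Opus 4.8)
The strategy is to apply the parabolic maximum principle, in the form of Hamilton's trick, to the evolution equation \eqref{2.7} for $G$. Fix a time $t$ and let $p\in M$ be a point where $G(\cdot,t)$ attains its spatial maximum; at $p$ one has $\nabla G=0$ and $\nabla^2 G\le 0$, so $\mathcal L G=\dot{(\sigma_k^{\alpha})}^{ml}\nabla_m\nabla_l G\le 0$ because $\dot{(\sigma_k^{\alpha})}^{ml}$ is positive definite by Lemma \ref{lem2.1}(i). Thus it suffices to show that, for $\alpha\in[1/k,c_\epsilon(n,k)]$ with a suitable $c_\epsilon(n,k)>1/k$, the remaining (first-order and zero-order) terms on the right-hand side of \eqref{2.7} are non-positive at $p$; then $\frac{d}{dt}\max_{M_t}G\le 0$. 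If $M_t$ is already round there is nothing to prove, so I may assume $\lambda_1\ne\lambda_2$ at $p$.

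First I would use axial symmetry. Since the flow preserves symmetry (\S\ref{sec2.4}), $M_t$ has only the two principal curvatures $\lambda_1$ (axial) and $\lambda_2=\cdots=\lambda_n$ (rotational), so that $G=(n-1)\sigma_k^{2\alpha}\big(\tfrac{1}{\lambda_1}-\tfrac{1}{\lambda_2}\big)^2$ is a function of $(\lambda_1,\lambda_2)$ alone, and by \eqref{2.8} the only independent non-vanishing components of $\nabla h$ are $a:=\nabla_1 h_{11}$ and $b:=\nabla_1 h_{22}=\cdots=\nabla_1 h_{nn}$ (together with those obtained from these by the Codazzi symmetry, e.g.\ $\nabla_j h_{1j}=\nabla_1 h_{jj}$). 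The spatial critical point condition reads $\nabla_1 G=\dot G^{ml}\nabla_1 h_{ml}=G_1 a+(n-1)G_2 b=0$, which (away from the locus $G_1=0$, handled separately) expresses $a=-(n-1)(G_2/G_1)\,b$.

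The zero-order terms are then treated as follows. For $\epsilon=0$ they are $\dot F^{ml}h_{mr}h^r_l\,\dot G^{ij}h_{ij}+(1-k\alpha)F\,\dot G^{ij}h_{im}h^m_j$ with $F=\sigma_k^{\alpha}$; writing $G=\sigma_k^{2\alpha}\psi$ with $\psi=\sum_{i<j}(\lambda_i^{-1}-\lambda_j^{-1})^2$, the homogeneity of $G$ of degree $2k\alpha-2$ together with the identity $\sum_i\lambda_i^2\,\partial\psi/\partial\lambda_i=0$ (which follows from $\lambda_m^2\,\partial\psi/\partial\lambda_m=-2\big(n\lambda_m^{-1}-\sum_j\lambda_j^{-1}\big)$) makes these terms vanish identically, as announced in \S\ref{sec1.3}. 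For $\epsilon=1$ there is the additional term $(1+k\alpha)F\,\dot G^{ij}g_{ij}-\dot F^{ml}g_{ml}\,\dot G^{ij}h_{ij}=(1+k\alpha)\sigma_k^{\alpha}\sum_i G_i-(2k\alpha-2)\big(\sum_i F_i\big)G$; evaluating the traces $\sum_i G_i$ and $\sum_i F_i$ explicitly and rescaling to the single variable $x=\lambda_1/\lambda_2>0$, one verifies this is non-positive for $\alpha\in[1/k,c_2(n,k)]$, with $c_2(n,k)$ the constant in \eqref{c2nk}.

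The core of the argument is the gradient term $(\dot G^{ij}\ddot F^{ml,rs}-\dot F^{ij}\ddot G^{ml,rs})\nabla_i h_{ml}\nabla_j h_{rs}$. Applying formula \eqref{2.1} for the second derivatives of symmetric functions (with $\lambda_2=\cdots=\lambda_n$, the divided-difference terms read as limits) and retaining only the components $a,b$, this becomes an explicit quadratic form in $(a,b)$ whose coefficients are built from the first and second derivatives of $F$ and $G$ at $(\lambda_1,\lambda_2,\dots,\lambda_2)$ and from the divided differences $\frac{F_1-F_2}{\lambda_1-\lambda_2}$, $\frac{G_1-G_2}{\lambda_1-\lambda_2}$. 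Substituting $a=-(n-1)(G_2/G_1)\,b$ and using homogeneity to eliminate all $\lambda_i$-scalings, the whole expression reduces to a positive multiple of $Q(x)\,b^2$, where $Q$ is the sextic polynomial \eqref{Q} in $x=\lambda_1/\lambda_2$. It then remains to exhibit a constant $c_0(n,k)>1/k$ so that $Q(x)\le 0$ for all $x>0$ whenever $\alpha\in[1/k,c_0(n,k)]$; this is done by Sturm's theorem in the appendix. Taking $c_\epsilon(n,k)$ to be $c_0(n,k)$ for $\epsilon=0$ and $\min\{c_0(n,k),c_2(n,k)\}$ for $\epsilon=1$, every term in \eqref{2.7} other than $\mathcal L G$ is $\le 0$ at the spatial maximum, and Hamilton's trick completes the proof. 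The main obstacle is precisely this last step: the case $\alpha=1/k$ is governed by the concavity and inverse-concavity of $\sigma_k^{1/k}$ (Lemma \ref{lem2.1}(ii)), but upgrading non-positivity of $Q$ to a genuine interval $\alpha>1/k$ requires a delicate real-root analysis of $Q$, whose coefficients depend polynomially on $n$, $k$ and $\alpha$.
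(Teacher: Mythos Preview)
Your proposal is correct and follows essentially the same route as the paper. You reproduce Lemma~\ref{lem3.2} (the Euclidean zero-order terms vanish) via the clean identity $\sum_i\lambda_i^2\,\partial_i\psi=0$, you substitute the critical-point relation $a=-(n-1)(G_2/G_1)b$ exactly as in Lemma~\ref{lem3.4}, and you correctly identify the reduction of the gradient terms to a positive multiple of $Q(x)\,b^2$ together with the extra sphere term governed by $c_2(n,k)$. Two small points where the paper is more explicit: the locus $\dot g^1=0$ is dispatched by a density/continuity argument (the gradient expression is a continuous function of $(\lambda_1,\lambda_2,\nabla_1h_{11},\nabla_1h_{22})$ on the critical set, and $\{\dot g^1\neq 0\}$ is dense there), and the paper observes that $Q$ is \emph{convex in $\alpha$}, so that $Q\le 0$ on $[1/k,c_0(n,k)]$ follows from the trivial factorisation $Q(x,k,n,1/k)=-2(n+(x-1)(k+x))^3$ together with a Sturm-sequence check at the single value $\alpha=c_0(n,k)$, rather than a Sturm analysis over the whole interval.
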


Before we prove Theorem \ref{thm3.1},  by combing Lemma \ref{lem3.1} and Theorem \ref{thm3.1}, we prove \eqref{3.1} and show that  if $\alpha\in[1/k, c_{\epsilon}(n,k) ]$, then
the strict convexity of the flow hypersurface is preserved under the flow \eqref{flow} for $\epsilon=0,1$. From Lemma \ref{lem3.1}, we obtain a uniform positive lower bound $C_0$
for the $\sigma_k$-curvature under the flow \eqref{flow}. By using Theorem \ref{thm3.1}, we know that there exists a constant $C_1$ which only depends on the
 initial hypersurface $M_0$ such that as long as the flow hypersurface is strictly convex, we have $\frac{G(p,t)}{n-1}\leq C_1$.
We denote $\frac{\lambda_{2}(p,t)}{\lambda_{1}(p,t)}$ by $r(p,t)$, since $M_t$ is axially symmetric and has two distinct principal curvatures $\lambda_1$ and $\lambda_2$ (with multiplicity $n-1$), we have
\begin{equation}\label{expressionG}
\begin{aligned}
\frac{G}{n-1}&=\sigma_k^{2\alpha}(\frac{1}{\lambda_1}-\frac{1}{\lambda_2})^2=\frac{\sigma_k^{2\alpha}}{\lambda_1\lambda_2}(\frac{\lambda_2}{\lambda_1}+\frac{\lambda_1}{\lambda_2}-2)\\
&=(\frac{\sigma_k^{2}}{(\lambda_1\lambda_2)^k})^{\frac{1}{k}}\sigma_k^{2(\alpha-\frac{1}{k})}(\frac{\lambda_2}{\lambda_1}+\frac{\lambda_1}{\lambda_2}-2)\\
&=(\binom{n-1}{k-1}+\binom{n-1}{k}r)^{\frac{2}{k}}r^{1-\frac{2}{k}}\sigma_k^{2(\alpha-\frac{1}{k})}(r+\frac{1}{r}-2)\leq C_1.
\end{aligned}
\end{equation}
Since $\sigma_k\geq C_0$, $\alpha\geq 1/k$, there exists a constant $C_2$ which only depends on $M_0$ such that
\begin{equation}\label{rbound}
(\binom{n-1}{k-1}+\binom{n-1}{k}r)^{\frac{2}{k}}r^{1-\frac{2}{k}}(r+\frac{1}{r}-2)\leq C_2.
\end{equation}
If $r\geq 1$, we obtain from \eqref{rbound} that ${\binom{n-1}{k}}^{\frac{2}{k}}(r-1)^2\leq C_2$, which implies that $r$ is bounded from above by a constant which only depends on $M_0$.
If $r<1$, we   obtain from \eqref{rbound} that ${\binom{n-1}{k-1}}^{\frac{2}{k}}(r^{1-\frac{2}{k}}(r-2)+r^{-\frac{2}{k}})\leq C_2$, which implies that $r$ is bounded from below by a positive constant which only depends on $M_0$. This means that  there exists some constant $C_3>1$ which only depends on $M_0$ such that
\begin{equation}\label{3.14}
C_3^{-1}\leq r(p,t)\leq C_3,
\end{equation}
which in combination with Lemma \ref{lem3.1} implies that the strict convexity of the flow hypersurface is preserved under the flow \eqref{flow}.
Moreover, we can obtain a uniform positive lower bound for the principal curvatures under the flow \eqref{flow} by combining \eqref{3.14} and Lemma \ref{lem3.1}.
Once we have the estimate \eqref{3.14}, we can obtain   immediately from \eqref{expressionG} that there exists some positive constant $C$ which only depends on $M_0$ such that
\begin{equation*}
r(p,t)+1/r(p,t)-2\le \frac{C}{\sigma_k^{2(\alpha-\frac{1}{k})}(p,t)},
\end{equation*}
which is equivalent to the key estimate \eqref{3.1}.

\begin{rem}
We note that when $\epsilon=0,1,~\alpha>0$, for the $\sigma_k^\alpha$-curvature flow \eqref{flow} with strictly convex initial hypersurfaces,  one can even obtain that the smallest principal curvature does not decrease along the flow by
applying Andrews' tensor maximum principle (see Theorem 3.2 in \cite{A2007}) to the evolution equation of the Weingarten tensor.  We refer to Theorem 5 of \cite{AMZ2013} in the case $\epsilon=0$ and Proposition 4.1 of  \cite{LILV2019} in the case $\epsilon=1$ for the details of the proof.
\end{rem}

\subsection{Proof of Theorem \ref{thm3.1}}\label{sec3.2}
In order to prove Theorem \ref{thm3.1}, we first give two important lemmas, which first appeared in \cite{AL2015}. For the readers' convenience, we give a brief proof here.
In the  proof of the following lemmas, at a given point $p\in M$, we  choose local coordinates $\xi^1,\cdots,\xi^n$ such that $g_{ij}=\delta_{ij},~\nabla_{\frac{\partial}{\partial \xi^i}}\frac{\partial}{\partial \xi^j}=0$ and $h^i_j=\text{diag}(\lambda_1,\cdots,\lambda_n)$ at $p$.
For convenience, we use notations $\dot{f}^i=\frac{\partial F}{\partial \lambda_i},~\dot{f}^{ij}=\frac{\partial^2F}{\partial \lambda_i\partial \lambda_j}$ and $\dot{g}^i=\frac{\partial G}{\partial \lambda_i},~\dot{g}^{ij}=\frac{\partial^2G}{\partial \lambda_i\partial \lambda_j}.$

\begin{lem}[\cite{AL2015}]\label{lem3.2}
	Let $F$ be a  smooth, symmetric and homogeneous of degree $a$ function of the principal curvatures $\lambda_1,\cdots,\lambda_n$ of a hypersurface $M$ in $\mathbb{R}^{n+1}(\epsilon)~(\epsilon=0,1)$, we define  $G$ by $G=F^2\sum_{i<j}(\frac{1}{\lambda_i}-\frac{1}{\lambda_j})^2$, then we have
	\begin{equation*}
	\dot{F}^{ml} h_{mr}h^r_l \dot{G}^{ij}h_{ij}+(1-a) F \dot{G}^{ij}h_{im} h^m_j = 0
	\end{equation*}
\end{lem}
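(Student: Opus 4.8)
The plan is to compute everything explicitly in terms of the two distinct principal curvatures $\lambda_1$ (axial, multiplicity $1$) and $\lambda_2$ (rotational, multiplicity $n-1$) and verify the identity algebraically. Since $F$ is homogeneous of degree $a$, Euler's theorem (cf.\ \eqref{2.2}) gives $\dot F^{ml}h_{ml}=aF$ and, more usefully here, $\dot f^1\lambda_1+(n-1)\dot f^2\lambda_2=aF$ in the diagonal gauge; similarly the first-order derivatives of $G$ in the $\lambda_i$ satisfy an Euler relation coming from $G$ being homogeneous of degree $0$ (because $F^2$ is degree $2a$ and $\sum_{i<j}(\lambda_i^{-1}-\lambda_j^{-1})^2$ is degree $-2$), namely $\dot g^1\lambda_1+(n-1)\dot g^2\lambda_2=0$. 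The two quantities to be compared are $\dot F^{ml}h_{mr}h^r_l=\dot f^1\lambda_1^2+(n-1)\dot f^2\lambda_2^2$ times $\dot G^{ij}h_{ij}=\dot g^1\lambda_1+(n-1)\dot g^2\lambda_2$, and $F\cdot\dot G^{ij}h_{im}h^m_j=F(\dot g^1\lambda_1^2+(n-1)\dot g^2\lambda_2^2)$.

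First I would record the homogeneity relations above together with the explicit form of $G$ in the axially symmetric setting: from \eqref{expressionG}, $G=(n-1)F^2(\lambda_1^{-1}-\lambda_2^{-1})^2$, so $G$ depends on $(\lambda_1,\lambda_2)$ only through $F$ and the single combination $(\lambda_1^{-1}-\lambda_2^{-1})^2$. Differentiating, $\dot g^i=2G\,\dot f^i/F + 2(n-1)F^2(\lambda_1^{-1}-\lambda_2^{-1})\cdot(\text{sign terms})\cdot\lambda_i^{-2}$; concretely $\dot g^1=\tfrac{2G}{F}\dot f^1-2(n-1)F^2(\lambda_1^{-1}-\lambda_2^{-1})\lambda_1^{-2}$ and $\dot g^2=\tfrac{2G}{F}\dot f^2+\tfrac{2}{n-1}\cdot(n-1)F^2(\lambda_1^{-1}-\lambda_2^{-1})\lambda_2^{-2}\cdot(n-1)$, i.e.\ $\dot g^2=\tfrac{2G}{F}\dot f^2+2F^2(\lambda_1^{-1}-\lambda_2^{-1})\lambda_2^{-2}$ after accounting for the $n-1$ equal entries. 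The key simplification is that the "$G/F$" parts of $\dot G^{ij}h_{ij}$ and of $\dot G^{ij}h_{im}h^m_j$ are, respectively, $\tfrac{2G}{F}(\dot f^1\lambda_1+(n-1)\dot f^2\lambda_2)=\tfrac{2G}{F}aF=2aG$ and $\tfrac{2G}{F}(\dot f^1\lambda_1^2+(n-1)\dot f^2\lambda_2^2)=\tfrac{2G}{F}\dot F^{ml}h_{mr}h^r_l$.

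Next I would substitute into the target expression $\dot F^{ml}h_{mr}h^r_l\cdot\dot G^{ij}h_{ij}+(1-a)F\dot G^{ij}h_{im}h^m_j$. Writing $P:=\dot F^{ml}h_{mr}h^r_l$ and using the decompositions above, the "$G/F$" contribution is $P\cdot 2aG+(1-a)F\cdot\tfrac{2G}{F}P=2aGP+2(1-a)GP=2GP$; wait — instead the cleaner route: the cross terms combine to $2GP$ from the first summand's $P\cdot 2aG$ plus $(1-a)\cdot 2GP$ from the second, giving $2GP$. This must be cancelled by the remaining "$(\lambda_1^{-1}-\lambda_2^{-1})$" pieces, so the real content is checking that $P\cdot 2(n-1)F^2(\lambda_1^{-1}-\lambda_2^{-1})(\lambda_2^{-2}-\lambda_1^{-2})+(1-a)F\cdot 2(n-1)F^2(\lambda_1^{-1}-\lambda_2^{-1})(\lambda_2^{-2}\lambda_2^2-\lambda_1^{-2}\lambda_1^2)+(\text{the }2GP\text{ term})=0$; the $\lambda_i^{-2}\lambda_i^2$ factor is just $1$, so the degree-zero homogeneity of $G$ kills the $(1-a)$ piece entirely, and one is left matching $2GP$ against $2(n-1)F^2(\lambda_1^{-1}-\lambda_2^{-1})(\lambda_2^{-2}-\lambda_1^{-2})P$. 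Factoring $\lambda_2^{-2}-\lambda_1^{-2}=(\lambda_1^{-1}-\lambda_2^{-1})(\lambda_1^{-1}+\lambda_2^{-1})\cdot(-1)\cdot(\cdots)$ and recalling $G=(n-1)F^2(\lambda_1^{-1}-\lambda_2^{-1})^2$ shows the two expressions are negatives of each other with the right combinatorial constant, so the sum vanishes. I expect the main obstacle to be purely bookkeeping: keeping the multiplicity factor $n-1$ attached correctly to the rotational directions in every derivative, and getting the signs in $\partial_{\lambda_i}(\lambda_1^{-1}-\lambda_2^{-1})^2$ right; there is no analytic difficulty, only the risk of an algebra slip, which I would guard against by also checking the identity at $\lambda_1=\lambda_2$ (where both sides are manifestly $0$ since $G$ and $\dot G$ vanish) and by a degree count (every term is homogeneous of degree $a$ in the $\lambda_i$, consistent with the claimed cancellation).
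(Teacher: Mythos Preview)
Your proposal has the right overall shape but contains two concrete errors, one conceptual and one algebraic, and the algebraic one actually prevents the claimed cancellation from going through as written.

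First, the homogeneity degree of $G$ is wrong. Since $F^2$ has degree $2a$ and $\Phi:=\sum_{i<j}(\lambda_i^{-1}-\lambda_j^{-1})^2$ has degree $-2$, $G=F^2\Phi$ has degree $b=2(a-1)$, not $0$. The correct Euler relation is $\dot G^{ij}h_{ij}=2(a-1)G$, not $\dot g^1\lambda_1+(n-1)\dot g^2\lambda_2=0$. You never actually invoke the wrong relation in the main line of computation, but you cite ``degree-zero homogeneity of $G$'' as the reason the $(1-a)$ piece vanishes; that is a non sequitur. The $(1-a)$ piece vanishes because $\sum_i\dot\Phi^i\lambda_i^2=-2\sum_{i\neq j}(\lambda_i^{-1}-\lambda_j^{-1})=0$ by antisymmetry, which is exactly what your ``$\lambda_i^{-2}\lambda_i^2=1$'' line is computing.

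Second, and more damaging: in the $\Phi$-part of $\dot G^{ij}h_{ij}=\sum_i\dot g^i\lambda_i$ you forgot to multiply by $\lambda_i$. The correct contribution is $2(n-1)F^2(\lambda_1^{-1}-\lambda_2^{-1})(\lambda_2^{-1}-\lambda_1^{-1})=-2G$, not $2(n-1)F^2(\lambda_1^{-1}-\lambda_2^{-1})(\lambda_2^{-2}-\lambda_1^{-2})$. With your wrong exponent, after factoring you get $-2GP(\lambda_1^{-1}+\lambda_2^{-1})$, which does \emph{not} cancel $2GP$; your sentence ``the two expressions are negatives of each other with the right combinatorial constant'' is therefore false as stated. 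With the correct exponent the remaining term is exactly $-2GP$ and the identity follows immediately.

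The paper's proof is shorter and does not assume axial symmetry (as the remark following the lemma notes). It uses the correct Euler relation $\dot G^{ij}h_{ij}=bG=2(a-1)G$ straight away, then expands only $\sum_i\dot g^i\lambda_i^2$ via your same decomposition $\dot g^i=2F\Phi\,\dot f^i+F^2\dot\Phi^i$; the surviving piece is $2(a-1)F^3\sum_{i\neq j}(\lambda_i^{-1}-\lambda_j^{-1})$, which vanishes by antisymmetry. Once you correct the degree of $G$ and the exponent slip, your argument in the two-curvature setting is exactly this computation specialized, so there is no need to restrict to axial symmetry at all.
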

\begin{proof}

By definition, $G$ is a symmetric and homogeneous of degree $ b =2(a-1)$ function of the principal curvatures. Using the Euler relation \eqref{2.2}, we have
\begin{equation*}
	\begin{aligned}
	&\dot{F}^{ml} h_{mr}h^r_l \dot{G}^{ij}h_{ij}+(1-a) F \dot{G}^{ij}h_{im} h^m_j\\
		=& b  G \dot{F}^{ml} h_{mr}h^r_l+(1-a) F \dot{G}^{ij}h_{im} h^m_j\\
		=&2(a-1)G\sum_i\dot{f}^i \lambda_i^2+(1-a) F\sum_i\dot{g}^i \lambda_i^2\\
		=&2(a-1)G\sum_i\dot{f}^i \lambda_i^2+(1-a) F\sum_i\Big(2F\dot{f}^i\sum_{k<j}(\frac{1}{\lambda_k}-\frac{1}{\lambda_j})^2+2F^2\sum_{j\neq i}(\frac{1}{\lambda_i}-\frac{1}{\lambda_j})(-\frac{1}{\lambda_i^2})\Big)\lambda_i^2\\
			=&2(a-1)F^3\sum_{i\neq j}(\frac{1}{\lambda_i}-\frac{1}{\lambda_j}) = 0.
	\end{aligned}
	\end{equation*}
\end{proof}
\begin{rem}
	In Lemma \ref{lem3.2}, we do not assume that $M$ is  axially symmetric.
\end{rem}

\begin{lem}[\cite{AL2015}]\label{lem3.4}
	Let $F$ and $G$ be two smooth, symmetric and homogeneous  functions of the principal curvatures $\lambda_1,\cdots,\lambda_n$ of a closed, strictly convex and  axially symmetric  hypersurface $M$  in $\mathbb{R}^{n+1}(\epsilon)~(\epsilon=0,1)$,  assume that $F$ is homogeneous of degree $a$ and $G$ is homogeneous of degree $ b $.
	At any stationary point of $G$, if $\dot{g}^1\neq 0$, $\lambda_1\neq \lambda_2=\cdots = \lambda_n$, then we have
	\begin{equation}\label{eqlem3.4}
	\begin{aligned}
	&(\dot{G}^{ij}\ddot{F}^{ml,rs}-\dot{F}^{ij}\ddot{G}^{ml,rs}) \nabla_i h_{ml} \nabla_j h_{rs}\\
	=&\{\frac{\dot{g}^1}{\lambda_2^2} a (a-1)F- \frac{\dot{f}^1}{\lambda_2^2}  b  ( b -1)G- \frac{2\dot{g}^1}{\lambda_2(\lambda_1-\lambda_2)}a F+\frac{2\dot{f}^1}{\lambda_2(\lambda_1-\lambda_2)} b  G \\
	&+(\frac{ b ^2 G^2}{\lambda_2^2 (\dot{g}^1)^2} - \frac{2  b  G \lambda_1}{\lambda_2^2 \dot{g}^1})(\dot{g}^1 \ddot{f}^{11} - \dot{f}^1 \ddot{g}^{11}) -2(n-1) \frac{ b  G}{\lambda_2 \dot{g}^1}(\dot{g}^1 \ddot{f}^{12} - \dot{f}^1 \ddot{g}^{12}) \} (\nabla_1 h_{22})^2.
	\end{aligned}
	\end{equation}
\end{lem}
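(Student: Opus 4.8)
The plan is to evaluate the left-hand side at the given stationary point $p$ in the adapted frame (so $g_{ij}=\delta_{ij}$, $h^i_j=\text{diag}(\lambda_1,\dots,\lambda_n)$, $\dot F^{ij}=\dot f^i\delta^{ij}$, $\dot G^{ij}=\dot g^i\delta^{ij}$), collapse the quadratic form in $\nabla h$ using axial symmetry, and then extract the identity using only Euler's homogeneity relations and the first-order condition $\nabla G=0$. Write $a_1:=\nabla_1h_{11}$ and $a_2:=\nabla_1h_{22}=\cdots=\nabla_1h_{nn}$. By \eqref{2.8} together with the (fully symmetric) Codazzi equations, the nonvanishing components of $\nabla h$ are exactly these, together with $\nabla_ph_{1p}=\nabla_ph_{p1}=\nabla_1h_{pp}=a_2$ for $p\ge 2$. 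Since $\dot F^{ij},\dot G^{ij}$ are diagonal, the left-hand side equals $\sum_i\dot g^i\,\ddot F^{ml,rs}\nabla_ih_{ml}\nabla_ih_{rs}-\sum_i\dot f^i\,\ddot G^{ml,rs}\nabla_ih_{ml}\nabla_ih_{rs}$, and only $i=1$ and $i=p\ge 2$ contribute. For $i=1$ the tensor $\nabla_1h$ is diagonal, so only the first term of \eqref{2.1} appears and $\ddot F^{ml,rs}\nabla_1h_{ml}\nabla_1h_{rs}=\sum_{m,l}\ddot f^{ml}\nabla_1h_{mm}\nabla_1h_{ll}=\ddot f^{11}a_1^2+2(n-1)\ddot f^{12}a_1a_2+\big[(n-1)\ddot f^{22}+(n-1)(n-2)\ddot f^{23}\big]a_2^2$. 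For $i=p\ge 2$ the tensor $\nabla_ph$ has only the off-diagonal entries $\nabla_ph_{1p}=\nabla_ph_{p1}=a_2$ with $\lambda_1\ne\lambda_p$, so only the second term of \eqref{2.1} appears and $\ddot F^{ml,rs}\nabla_ph_{ml}\nabla_ph_{rs}=2\frac{\dot f^1-\dot f^2}{\lambda_1-\lambda_2}a_2^2$; summing over $p\ge 2$ multiplies this by $n-1$. The same formulas hold with $F$ replaced by $G$.

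Next I would bring in homogeneity. Euler's theorem for $F$ of degree $a$ gives, in the present configuration, $(n-1)\dot f^2=\lambda_2^{-1}(aF-\lambda_1\dot f^1)$, $\lambda_1\ddot f^{11}+(n-1)\lambda_2\ddot f^{12}=(a-1)\dot f^1$, and $\lambda_1\ddot f^{12}+\lambda_2\big(\ddot f^{22}+(n-2)\ddot f^{23}\big)=(a-1)\dot f^2$, together with the three analogous identities for $G$ of degree $b$. I use the third identity to eliminate $(n-1)\ddot f^{22}+(n-1)(n-2)\ddot f^{23}$ in favor of $\dot f^2$ and $\ddot f^{12}$, the first to turn $\dot f^2$ into $F,\dot f^1$, and the second to remove $\ddot f^{11}$ once its coefficient has been simplified; likewise for $G$. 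The stationary-point hypothesis then enters: by \eqref{2.8}, $\nabla_iG=0$ holds automatically for $i\ge 2$, so the only condition is $0=\nabla_1G=\dot g^1a_1+(n-1)\dot g^2a_2$, and since $\dot g^1\ne 0$ and $(n-1)\dot g^2=\lambda_2^{-1}(bG-\lambda_1\dot g^1)$, this gives $a_1=\big(\tfrac{\lambda_1}{\lambda_2}-\tfrac{bG}{\lambda_2\dot g^1}\big)a_2$. Substituting this turns every surviving monomial ($a_1^2$, $a_1a_2$, $a_2^2$) into a multiple of $(\nabla_1h_{22})^2$, and a bookkeeping exercise shows: the terms proportional to $\lambda_1\dot f^1\dot g^1/\lambda_2^2$ cancel (the reductions of $\ddot f^{11}$ and of $\ddot f^{22}+(n-2)\ddot f^{23}$ contribute opposite multiples $a-b$ and $b-a$); the $\ddot f^{12}$-terms produced by those two reductions cancel the $\ddot f^{12}$-term already present (via $a_1a_2$); the coefficient of $(\dot g^1\ddot f^{11}-\dot f^1\ddot g^{11})$ reduces to $\tfrac{b^2G^2}{\lambda_2^2(\dot g^1)^2}-\tfrac{2bG\lambda_1}{\lambda_2^2\dot g^1}$ and that of $(\dot g^1\ddot f^{12}-\dot f^1\ddot g^{12})$ to $-2(n-1)\tfrac{bG}{\lambda_2\dot g^1}$; the $\ddot f^{22},\ddot f^{23}$-reduction leaves $\tfrac{\dot g^1}{\lambda_2^2}a(a-1)F-\tfrac{\dot f^1}{\lambda_2^2}b(b-1)G$; and the $i\ge 2$ part, after inserting the Euler identities for $\dot f^2,\dot g^2$, collapses to $-\tfrac{2\dot g^1}{\lambda_2(\lambda_1-\lambda_2)}aF+\tfrac{2\dot f^1}{\lambda_2(\lambda_1-\lambda_2)}bG$. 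Collecting these yields \eqref{eqlem3.4}.

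The computation itself is routine; the difficulty is entirely in the bookkeeping. The two points requiring care are: keeping the $i\ge 2$ contributions, which are the sole origin of the $(\lambda_1-\lambda_2)^{-1}$ terms and are easy to overlook because $\nabla_ph$ ($p\ge 2$) is purely off-diagonal; and applying the Euler identities in exactly the combination that annihilates the spurious $\ddot f^{12}$- and $\lambda_1\dot f^1\dot g^1/\lambda_2^2$-terms. Note that no relation between $F$ and $G$ beyond homogeneity is used, which is why the lemma is stated for arbitrary homogeneous $F,G$.
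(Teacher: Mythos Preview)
Your proposal is correct and follows essentially the same route as the paper's proof: expand the quadratic form using \eqref{2.1} and axial symmetry, impose $\nabla_1 G=0$ to express $\nabla_1 h_{11}$ in terms of $\nabla_1 h_{22}$, and collapse everything via Euler's relations. The only organizational difference is that you use the per-variable Euler identities $\lambda_1\ddot f^{11}+(n-1)\lambda_2\ddot f^{12}=(a-1)\dot f^1$ and $\lambda_1\ddot f^{12}+\lambda_2(\ddot f^{22}+(n-2)\ddot f^{23})=(a-1)\dot f^2$, whereas the paper uses their combination $\lambda_1^2\ddot f^{11}+2(n-1)\lambda_1\lambda_2\ddot f^{12}+(n-1)\lambda_2^2\ddot f^{22}+(n-1)(n-2)\lambda_2^2\ddot f^{23}=a(a-1)F$; the resulting cancellations are identical.
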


\begin{proof}
Using \eqref{2.1}, the property of the axially symmetric hypersurfaces given in Section 2.3 (see \eqref{2.8}) and the Codazzi equations, we have
	\begin{equation}
\begin{aligned} \label{eq3}
&\dot{G}^{ij}\ddot{F}^{ml,rs} \nabla_i h_{ml} \nabla_j h_{rs}\\
=&\dot{g}^1\{ \ddot{f}^{11}(\nabla_1 h_{11} )^2+2(n-1) \ddot{f}^{12}\nabla_1 h_{11} \nabla_1 h_{22} + (n-1)\ddot{f}^{22} (\nabla_1 h_{22})^2 \\
&+ (n-1) (n-2) \ddot{f}^{23} (\nabla_1 h_{22})^2\} + 2(n-1) \dot{g}^2 \frac{\dot{f}^1-\dot{f}^2}{\lambda_1 - \lambda_2}(\nabla_1 h_{22})^2,
	\end{aligned}
\end{equation}
and
	\begin{equation}
\begin{aligned} \label{eq4}
&\dot{F}^{ij}\ddot{G}^{ml,rs} \nabla_i h_{ml} \nabla_j h_{rs}\\
=&\dot{f}^1\{ \ddot{g}^{11}(\nabla_1 h_{11} )^2+2(n-1) \ddot{g}^{12}\nabla_1 h_{11} \nabla_1 h_{22} + (n-1)\ddot{g}^{22} (\nabla_1 h_{22})^2\\
&+(n-1) (n-2) \ddot{g}^{23} (\nabla_1 h_{22})^2\} + 2(n-1) \dot{f}^2 \frac{\dot{g}^1-\dot{g}^2}{\lambda_1 - \lambda_2}(\nabla_1 h_{22})^2.
	\end{aligned}
\end{equation}
Since
\begin{equation*}\label{nablag}
\nabla_1 G= \dot{g}^1 \nabla_1 h_{11}+ (n-1) \dot{g}^2 \nabla_1 h_{22},
\end{equation*}
at any stationary point of $G$,
if $\dot{g}^1\neq 0$, then we have
\begin{equation}\label{nablah11}
 \nabla_1 h_{11} = \frac{1}{\dot{g}^1}(\nabla_1 G - (n-1) \dot{g}^2 \nabla_1 h_{22}).
\end{equation}
Substituting \eqref{nablah11} into \eqref{eq3} and \eqref{eq4}, we get
	\begin{equation}\label{eq5}
\begin{aligned}
&(\dot{G}^{ij}\ddot{F}^{ml,rs}-\dot{F}^{ij}\ddot{G}^{ml,rs}) \nabla_i h_{ml} \nabla_j h_{rs}\\\
=&(\nabla_1 G)^2 \cdot\frac{(\dot{g}^1 \ddot{f}^{11} - \dot{f}^1 \ddot{g}^{11})}{(\dot{g}^1)^2}\\
&+\nabla_1 h_{22} \nabla_1 G\cdot\frac{2(n-1)}{\dot{g}^1}\big (\dot{g}^1 \ddot{f}^{12} - \dot{f}^1 \ddot{g}^{12}-\frac{\dot{g}^2}{\dot{g}^1}(\dot{g}^1 \ddot{f}^{11} - \dot{f}^1 \ddot{g}^{11})\big) \\
&+(\nabla_1h_{22})^2\{(n-1)^2 (\frac{\dot{g}^2}{\dot{g}^1})^2(\dot{g}^1 \ddot{f}^{11} - \dot{f}^1 \ddot{g}^{11}) - 2(n-1)^2 \frac{\dot{g}^2}{\dot{g}^1}(\dot{g}^1 \ddot{f}^{12} - \dot{f}^1 \ddot{g}^{12}) \\
&+(n-1)(\dot{g}^1 \ddot{f}^{22} - \dot{f}^1 \ddot{g}^{22})+(n-1)(n-2)(\dot{g}^1 \ddot{f}^{23} - \dot{f}^1 \ddot{g}^{23})+2(n-1)\frac{\dot{g}^2 \dot{f}^1- \dot{f}^2 \dot{g}^1}{\lambda_1 -  \lambda_2}\}.
	\end{aligned}
\end{equation}
Using the Euler relation \eqref{2.2}, we obtain
	\begin{equation*}
\lambda_1 \dot{g}^1+(n-1) \lambda_2 \dot{g}^2 =  b  G,
\end{equation*}
which leads to the following relation
	\begin{equation}\label{g1g2}
\frac{\dot{g}^2}{\dot{g}^1} = -\frac{\lambda_1}{(n-1) \lambda_2}+\frac{ b  G}{(n-1) \lambda_2 \dot{g}^1}.
\end{equation}
Putting \eqref{g1g2} into \eqref{eq5}, we have that  at any stationary point of $G$, if $\dot{g}^1\neq 0$, then
\begin{equation*}\begin{aligned}
&(\dot{G}^{ij}\ddot{F}^{ml,rs}-\dot{F}^{ij}\ddot{G}^{ml,rs}) \nabla_i h_{ml} \nabla_j h_{rs}\\
=&\{ (\dot{g}^1 \ddot{f}^{11} - \dot{f}^1 \ddot{g}^{11})\big((\frac{\lambda_1}{\lambda_2})^2+ (\frac{ b  G}{\lambda_2 \dot{g}^1})^2 - \frac{2  b  G \lambda_1}{\lambda_2^2 \dot{g}^1}\big) \\
&+2(n-1)(\dot{g}^1 \ddot{f}^{12} - \dot{f}^1 \ddot{g}^{12}) (\frac{\lambda_1}{\lambda_2} - \frac{ b  G}{\lambda_2 \dot{g}^1})\\
&+(n-1)(\dot{g}^1 \ddot{f}^{22} - \dot{f}^1 \ddot{g}^{22})+(n-1)(n-2)(\dot{g}^1 \ddot{f}^{23} - \dot{f}^1 \ddot{g}^{23})\\
&+\frac{2\dot{f}^1 \dot{g}^1}{\lambda_1- \lambda_2}(-\frac{\lambda_1}{\lambda_2}+\frac{ b  G}{\lambda_2 \dot{g}^1}) - 2(n-1)\frac{ \dot{f}^2 \dot{g}^1}{\lambda_1- \lambda_2}\} (\nabla_1 h_{22})^2. \label{eq6}
\end{aligned}\end{equation*}
By the homogeneity of $F$ and $G$, we use the Euler relation again to get the following equations.
	\begin{equation*}
\begin{aligned}
\left\{
\begin{array}{ll}
\lambda_1 \dot{f}^1+(n-1) \lambda_2 \dot{f}^2 = a F\\
\lambda_1^2 \ddot{f}^{11}+2(n-1)\lambda_1 \lambda_2 \ddot{f}^{12}+(n-1)\lambda_2^2 \ddot{f}^{22}+(n-1)(n-2)\lambda_2^2 \ddot{f}^{23} = a (a - 1) F\\
\lambda_1^2 \ddot{g}^{11}+2(n-1)\lambda_1 \lambda_2 \ddot{g}^{12}+(n-1)\lambda_2^2 \ddot{g}^{22}+(n-1)(n-2)\lambda_2^2 \ddot{g}^{23} =  b  ( b  - 1) G
\end{array}
\right. .
	\end{aligned}
\end{equation*}
Thus \eqref{eq6}  can be simplified as follows.
	\begin{equation*}
\begin{aligned}
&(\dot{G}^{ij}\ddot{F}^{ml,rs}-\dot{F}^{ij}\ddot{G}^{ml,rs})\nabla_i h_{ml} \nabla_j h_{rs}\\
=&\{\frac{\dot{g}^1}{\lambda_2^2} a (a-1)F- \frac{\dot{f}^1}{\lambda_2^2}  b  ( b -1)G- \frac{2\dot{g}^1}{\lambda_2(\lambda_1-\lambda_2)}a F+\frac{2\dot{f}^1}{\lambda_2(\lambda_1-\lambda_2)} b  G \\
+&(\frac{ b ^2 G^2}{\lambda_2^2 (\dot{g}^1)^2} - \frac{2  b  G \lambda_1}{\lambda_2^2 \dot{g}^1})(\dot{g}^1 \ddot{f}^{11} - \dot{f}^1 \ddot{g}^{11}) -2(n-1) \frac{ b  G}{\lambda_2 \dot{g}^1}(\dot{g}^1 \ddot{f}^{12} - \dot{f}^1 \ddot{g}^{12}) \} (\nabla_1 h_{22})^2.
	\end{aligned}
\end{equation*}
This completes the proof of Lemma \ref{lem3.4}.
\end{proof}

Recall that the
	 evolution equation for $G$ (defined by \eqref{defG}) under the flow \eqref{flow} can be written in the following form (see \eqref{2.7}).
	\begin{equation}\label{evog}
	\begin{aligned}
\frac{\partial}{\partial t} G=&\mathcal{L} G+(\dot{G}^{ij}\ddot{F}^{ml,rs}-\dot{F}^{ij}\ddot{G}^{ml,rs})\nabla_ih_{ml}\nabla_jh_{rs}\\
&+\dot{F}^{ml}h_{mr}h^r_l\dot{G}^{ij}h_{ij}+(1-k\alpha)F\dot{G}^{ij}h_{im}h^m_j\\
&+\epsilon\big((1+k\alpha)F \dot{G}^{ij}g_{ij}-\dot{F}^{ml}g_{ml}\dot{G}^{ij}h_{ij}\big),
	\end{aligned}
	\end{equation}
	where $F=\sigma_k^{a}$  is the speed function  of the flow \eqref{flow} with degree $a=k\alpha$.
	We prove the two cases $\epsilon=0$ and $\epsilon=1$ separately.

\textbf{Case 1: $\epsilon=0$.} In order to apply the maximum principle, we need to show that the right-hand side of \eqref{evog} has a desired sign at stationary points of $G$.
	From the definition of $G$, we know that $G$ is homogeneous of degree $ b =2k \alpha-2$. In view of Lemma \ref{lem3.2}, we obtain that the zero-order terms of the right-hand side of \eqref{evog} are identically zero. In order to apply the maximum principle, it remains to prove that the gradient terms of the right-hand side of \eqref{evog} are non-positive at any maximum point of $G$. If $\lambda_1=\lambda_2$ at a maximum point $p\in M_{t_1}$, then we obtain that $G$ is identically $0$ on $M_{t_1}$, which means that $M_{t_1}$ is a round sphere, hence the right-hand side of  \eqref{evog} is identically $0$ at any point $p\in M_{t_1}$ and $M_t$ is a round sphere for any $t\geq t_1$. Note that the gradient terms $(\dot{G}^{ij}\ddot{F}^{ml,rs}-\dot{F}^{ij}\ddot{G}^{ml,rs})\nabla_ih_{ml}\nabla_jh_{rs}$ can be regarded as a function of
$\lambda_1,\lambda_2, n,k,\alpha,\nabla_1h_{11},\nabla_1h_{22}$, the set $\{(\lambda_1,\lambda_2, n,k,\alpha,\nabla_1h_{11},\nabla_1h_{22})|\nabla_1G=0,\lambda_1\neq\lambda_2,\dot{g}^1\ne0\}$ is a dense subset of $\{(\lambda_1,\lambda_2, n,k,\alpha,\nabla_1h_{11},\nabla_1h_{22})|\nabla_1G=0,\lambda_1\neq\lambda_2\}$, by the property of continuity, we only need to prove that $(\dot{G}^{ij}\ddot{F}^{ml,rs}-\dot{F}^{ij}\ddot{G}^{ml,rs})\nabla_ih_{ml}\nabla_jh_{rs}$ are non-positive  in the case which satisfies that $\lambda_1\neq \lambda_2$ and $\dot{g}^1\neq0$, and we can apply  Lemma \ref{lem3.4} to simplify the gradient terms in \eqref{evog}.
	
	For convenience, we  use the following notations:
\begin{equation*}
\phi=\frac{\lambda_2-\lambda_1}{\lambda_1 \lambda_2},~
\Phi=\sum_{i<j}(\frac{1}{\lambda_i}-\frac{1}{\lambda_j})^2,
\end{equation*}
then $G=\sigma_k^{2\alpha  } \Phi $ and we can compute the derivatives of $F, ~G$ and $\Phi$ as follows:
\begin{equation*}
\begin{aligned}
\dot{f}^r&=\alpha \sigma_k^{\alpha -1} \dot{\sigma}_k^r, ~\ddot{f}^{rr}=\alpha(\alpha-1)\sigma_k^{\alpha-2}(\dot{\sigma}_k^r)^2, \\
\ddot{f}^{rs}&=\alpha(\alpha-1)\sigma_k^{\alpha-2}\dot{\sigma}_k^r \dot{\sigma}_k^s+ \alpha \sigma_k^{\alpha-1}\ddot{\sigma}_k^{rs},~r\ne s,\\
\dot{g}^r&=2\alpha \sigma_k^{2\alpha-1}\dot{\sigma}_k^r\Phi+\sigma_k^{2\alpha} \dot{\Phi}^r , \\
\ddot{g}^{rr}&=2\alpha(2\alpha-1)\sigma_k^{2\alpha-2}(\dot{\sigma}_k^r)^2\Phi+ 4\alpha \sigma_k^{2\alpha-1}\dot{\sigma}_k^r\dot{\Phi}^r+\sigma_k^{2\alpha}\ddot{\Phi}^{rr},
\end{aligned}
\end{equation*}
\begin{equation*}
\begin{aligned}
\ddot{g}^{rs}&=2\alpha(2\alpha-1)\sigma_k^{2\alpha-2}\dot{\sigma}_k^r\dot{\sigma}_k^s\Phi+2\alpha \sigma_k^{2\alpha-1}\ddot{\sigma}_k^{rs}\Phi+2\alpha \sigma_k^{2\alpha-1}\dot{\sigma}_k^r\dot{\Phi}^s\\
&~~~+2\alpha \sigma_k^{2\alpha-1}\dot{\sigma}_k^s\dot{\Phi}^r+\sigma_k^{2\alpha}\ddot{\Phi}^{rs} ,~r\neq s,\\
\dot{\Phi}^r &= -2\lambda_r^{-2} \sum_{i \ne r}(\frac{1}{\lambda_r}-\frac{1}{\lambda_i}),\\
\ddot{\Phi}^{rr}&=\frac{\partial}{\partial \lambda_r} \dot{\Phi}^r =2(n-1)\lambda_r^{-4}+4\lambda_r^{-3} \sum_{i \ne r}(\frac{1}{\lambda_r}-\frac{1}{\lambda_i}),\\
~\ddot{\Phi}^{rs}&=\frac{\partial}{\partial \lambda_s} \dot{\Phi}^r =-2\lambda_r^{-2}\lambda_s^{-2},~r\neq s.
\end{aligned}
\end{equation*}

Since $M_t$ is axially symmetric and  we have that $\lambda_2=\cdots=\lambda_n$, it follows that at any point $p \in M_t$, we have
\begin{equation*}
\begin{aligned}
\sigma_k =&\binom{n-1}{k-1}\lambda_1\lambda_2^{k-1}+\binom{n-1}{k}\lambda_2^k,~1\leq k\leq n-1;~\sigma_n=\lambda_1\lambda_2^{n-1},\\
\dot{\sigma}_k^1=&\binom{n-1}{k-1}\lambda_2^{k-1}=\frac{k}{\lambda_1 k+\lambda_2 (n-k)}\sigma_k ,\\
\dot{\sigma}_k^2=&\binom{n-2}{k-2}\lambda_1\lambda_2^{k-2}+\binom{n-2}{k-1}\lambda_2^{k-1}=\frac{k (\lambda_1 (k-1)+\lambda_2 (n-k))}{\lambda_2 (n-1) (\lambda_1 k+\lambda_2 (n-k))}\sigma_k ,\\
\ddot{\sigma}_k^{12}=&\binom{n-2}{k-2}\lambda_2^{k-2}=\frac{k(k-1) }{\lambda_2 (n-1) (\lambda_1 k+\lambda_2 (n-k))}\sigma_k ,\\
\dot{f}^1=&\frac{\alpha k}{\lambda_1 k+ \lambda_2 (n-k)} \sigma_k^{\alpha},~\dot{f}^2=\frac{\alpha k(\lambda_1(k-1)+\lambda_2(n-k))}{\lambda_2 (n-1) (\lambda_1 k+ \lambda_2 (n-k))}
\sigma_k^{\alpha},\\
\ddot{f}^{11}=&\frac{\alpha (\alpha-1) k^2}{(\lambda_1 k+ \lambda_2 (n-k))^2} \sigma_k^{\alpha},~\ddot{f}^{12}=\frac{\alpha^2 k^2 (k-1)\lambda_1+\alpha k (\alpha k-1)(n-k) \lambda_2}{\lambda_2 (n-1)(\lambda_1 k+ \lambda_2 (n-k))^2} \sigma_k^{\alpha},\\
\dot{\Phi}^1=&-2(n-1)\lambda_1^{-2} \phi,~\dot{\Phi}^2=2\lambda_2^{-2} \phi,\\
\ddot{\Phi}^{11}=&2(n-1)\lambda_1^{-4}+4(n-1) \lambda_1^{-3}\phi,~\ddot{\Phi}^{12}=-2\lambda_1^{-2} \lambda_2^{-2},\\
\dot{g}^1=&\frac{2(n-1)}{\lambda_1^3 \lambda_2^2(\lambda_1 k+ \lambda_2 (n-k))}\sigma_k^{2\alpha}(\lambda_1-\lambda_2)(\alpha k \lambda_1^2-(\alpha-1)k \lambda_1\lambda_2+(n-k)\lambda_2^2),\\
\ddot{g}^{11}=&\frac{2(n-1)}{\lambda_1^{4} \lambda_2^{2} (\lambda_1 k+ \lambda_2(n-k))^{2}} \sigma_k^{2\alpha}\{\lambda_1^4 \cdot \alpha k^2(2\alpha-1)- \lambda_1^3 \lambda_2\cdot 2 k^2(\alpha-1)(2\alpha-1)\\
&+\lambda_1^2\lambda_2^2\cdot k(\alpha-1)(2\alpha k-7k+4n)-\lambda_1 \lambda_2^3\cdot 2 (n-k)(2\alpha k-4k+n)\\
&+\lambda_2^4\cdot 3(n-k)^2\},
\end{aligned}
\end{equation*}
and
\begin{equation*}
\begin{aligned}
\ddot{g}^{12}=&
\frac{2}{\lambda_1^{3}\lambda_2^{3}(\lambda_1 k+\lambda_2(n-k))^{2}} \sigma_k^{2\alpha}\{\lambda_1^4 \cdot 2 \alpha k^2 (\alpha (k-1)-1)\\
&+ \lambda_1^3\lambda_2 \cdot k (2\alpha k^2(1-3\alpha)+k(2n\alpha^2+4\alpha^2+3\alpha-1)-3n\alpha)\\
&+\lambda_1^2 \lambda_2^2\cdot 2 k (\alpha-1)  (3 \alpha k^2-k (2 \alpha n+\alpha+1)+n)\\
&+\lambda_1\lambda_2^3 \cdot (n-k) ( 2 (\alpha-3) \alpha k^2+k(2 \alpha n+\alpha+1)-n)\\
&+ \lambda_2^4 \cdot(-2 \alpha k (n-k)^2)
\}.
\end{aligned}
\end{equation*}

At any stationary point $p \in M_t$, by substituting the derivatives of $F$ and $G$ into the right-hand side of  \eqref{eqlem3.4} and noting that $F$ has degree $\alpha k$, we have
\begin{equation*}
\begin{aligned}
\frac{\dot{g}^1}{\lambda_2^2} \alpha k (\alpha k-1)F=&\frac{2 \alpha k (n-1) (\alpha k-1) (\lambda_1-\lambda_2) \left(\lambda_1^2 \alpha k-\lambda_1  \lambda_2 (\alpha-1) k+\lambda_2^2 (n-k)\right)}{\lambda_1^3 \lambda_2^4 (k\lambda_1+\lambda_2 (n-k))}\sigma_k^{3\alpha},\\
 \frac{\dot{f}^1}{\lambda_2^2}  b  ( b -1)G=&\frac{2 \alpha k (n-1)(\alpha k-1) (2 \alpha k-3) (\lambda_1-\lambda_2)^2 }{\lambda_1^2 \lambda_2^4 (\lambda_1 k+\lambda_2 (n-k))}\sigma_k^{3\alpha},\\
 \frac{2\dot{g}^1}{\lambda_2(\lambda_1-\lambda_2)}\alpha k F=&\frac{4 \alpha k (n-1) \left(\lambda_1^2 \alpha k-\lambda_1  \lambda_2(\alpha-1) k+\lambda_2^2 (n-k)\right)}{\lambda_1^3 \lambda_2^3 (\lambda_1 k+\lambda_2 (n-k))}\sigma_k^{3\alpha},\\
\frac{2\dot{f}^1}{\lambda_2(\lambda_1-\lambda_2)} b  G=&\frac{4 \alpha k (n-1)  (\alpha k-1)(\lambda_1-\lambda_2)}{\lambda_1^2 \lambda_2^3 (\lambda_1 k+\lambda_2 (n-k))}\sigma_k^{3\alpha},
\end{aligned}
\end{equation*}
\begin{equation*}
\begin{aligned}
&\quad(\frac{ b ^2 G^2}{\lambda_2^2 (\dot{g}^1)^2} - \frac{2  b  G \lambda_1}{\lambda_2^2 \dot{g}^1})(\dot{g}^1 \ddot{f}^{11} - \dot{f}^1 \ddot{g}^{11})\\
=&\frac{-2 \alpha k (n-1) (\alpha k-1)(\lambda_1-\lambda_2) }{\lambda_1^{2} \lambda_2^{4} (\lambda_1 k+\lambda_2 (n-k))^{2}(\lambda_1^2 \alpha k-\lambda_1 \lambda_2(\alpha-1)k+\lambda_2^2(n-k))^{2}}\sigma_k^{3\alpha}\cdot\\
& \big(\lambda_1^4 \alpha^2 k^2-\lambda_1^3 \lambda_2 (2 \alpha^2-3 \alpha+1)  k^2+\lambda_1^2  \lambda_2^2 (\alpha-1) k ((\alpha-5) k+3 n)\\
&\quad+\lambda_1 \lambda_2^3 (k-n) ((3 \alpha-7) k+2 n)+3 \lambda_2^4 (n-k)^2\big)\cdot\\
& \big(\lambda_1^2 k (\alpha (k-2)-1)+\lambda_1 \lambda_2 (\alpha k (-2 k+n+2)-n)+\lambda_2^2 (\alpha k+1) (k-n)\big),
\end{aligned}
\end{equation*}
and
\begin{equation*}
\begin{aligned}
& \quad2(n-1) \frac{ b  G}{\lambda_2 \dot{g}^1}(\dot{g}^1 \ddot{f}^{12} - \dot{f}^1 \ddot{g}^{12})\\
=&\frac{-4 \alpha k (n-1) (\alpha k-1)(\lambda_1-\lambda_2) }{\lambda_1^{2} \lambda_2^{4} (\lambda_1 k+\lambda_2 (n-k))^{2}(\lambda_1^2 \alpha k-\lambda_1 \lambda_2(\alpha-1)k+\lambda_2^2(n-k))}\sigma_k^{3\alpha} \cdot\\
 &\Big(\lambda_1^4 \alpha k^2 (\alpha (k-1)-2)+\lambda_1^3 \lambda_2 k ((\alpha-3 \alpha^2) k^2+k (\alpha^2 (n+2)+3 \alpha-1)-2 \alpha n)\\
 &\quad+\lambda_1^2  \lambda_2^2 (\alpha-1) k (3 \alpha k^2-k (2 \alpha n+\alpha+1)+n)\\
 &\quad+\lambda_1 \lambda_2^3 k (n-k) (k\alpha(\alpha-3)+(n+1)\alpha-1)-\lambda_2^4 (\alpha k+1) (n-k)^2\Big).
\end{aligned}
\end{equation*}
Using the above formulas, we obtain that
\begin{equation*}
\begin{aligned}
&\frac{\dot{g}^1}{\lambda_2^2} \alpha k (\alpha k-1)F- \frac{\dot{f}^1}{\lambda_2^2}  b  ( b -1)G- \frac{2\dot{g}^1}{\lambda_2(\lambda_1-\lambda_2)}\alpha k F+\frac{2\dot{f}^1}{\lambda_2(\lambda_1-\lambda_2)} b  G \\
&+(\frac{ b ^2 G^2}{\lambda_2^2 (\dot{g}^1)^2} - \frac{2  b  G \lambda_1}{\lambda_2^2 \dot{g}^1})(\dot{g}^1 \ddot{f}^{11} - \dot{f}^1 \ddot{g}^{11}) -2(n-1) \frac{ b  G}{\lambda_2 \dot{g}^1}(\dot{g}^1 \ddot{f}^{12} - \dot{f}^1 \ddot{g}^{12})\\
=&\frac{2(n-1) \alpha k \sigma_k^{3\alpha}}{\lambda_1^3 \lambda_2^3(\lambda_1 k+ \lambda_2(n-k))(\lambda_1^2 \alpha k-\lambda_1 \lambda_2(\alpha-1)k+\lambda_2^2(n-k))^2}\cdot P,
\end{aligned}
\end{equation*}
where $P$ is a polynomial given by
\begin{equation*}
\begin{aligned}
P:=&\quad\lambda_1^6 k^2 \big(\alpha (k-1)-1\big) \big(\alpha (k+2)-1\big)\\
&+\lambda_1^5 \lambda_2 k \big(\alpha^2 k(-4k^2+3k(n-2)+n+6)+\alpha(10k^2-6k n-n)+3n-6k\big)\\
&+\lambda_1^4 \lambda_2^2 \big(\alpha^2k^2(6k^2+3k(4-3n)+2n^2-5n-6)+\alpha k(k^3-24k^2+2k(11n+3)-n(4n+1))\\
&\quad-k^3+12k^2-13k n+2n^2\big)\\
&+\lambda_1^3 \lambda_2^3 \big(\alpha^2k^2(-4k^2+k(9n-10)-4n^2+7n+2)\\
&\quad+\alpha k(-4k^3+k^2(3n+32)-2k(19n+4)+5n(2n+1)) +2k^3-k^2(3n+10)+17k n-6n^2\big)\\
&+\lambda_1^2 \lambda_2^4 (k-n)\big(\alpha^2 k^2(k-2n+3)+\alpha k(6k^2-k(3n+22)+12n+3)+3k(n+1)-4n\big)\\
&-\lambda_1 \lambda_2^5 (n-k)^2 (\alpha k(4k-n-6)+2k+n)\\
&+\lambda_2^6 (\alpha k+1) (k-n)^3.
\end{aligned}
\end{equation*}
It follows that at any stationary point $p$ of $G$, if $\dot{g}^1\neq 0$, from Lemma \ref{lem3.4}, we have
\begin{equation*}
\begin{aligned}
&(\dot{G}^{ij}\ddot{F}^{ml,rs}-\dot{F}^{ij}\ddot{G}^{ml,rs})\nabla_i h_{ml} \nabla_j h_{rs} \\
=&\frac{2(n-1) \alpha k \sigma_k^{3\alpha}}{\lambda_1^3 \lambda_2^3(\lambda_1^2 \alpha k-\lambda_1 \lambda_2(\alpha-1)k+\lambda_2^2(n-k))^2(\lambda_1 k+ \lambda_2(n-k))}(\nabla_1 h_{22})^2\cdot P.
\end{aligned}
\end{equation*}
If   $\nabla_1 h_{22}=0$ at $p$, then $(\dot{G}^{ij}\ddot{F}^{ml,rs}-\dot{F}^{ij}\ddot{G}^{ml,rs})\nabla_i h_{ml} \nabla_j h_{rs}=0$ at $p$.
If $\nabla_1 h_{22}\neq0$ at $p$, let $x=\frac{\lambda_1}{\lambda_2}$ and  we define a polynomial $Q$ by
\begin{equation}\begin{aligned}\label{Q}
Q:=&\frac{P}{\lambda_2^6}\\
=&\quad x^6 k^2 (\alpha (k-1)-1) (\alpha (k+2)-1)\\
&+x^5 k \big(\alpha^2 k(-4k^2+3k(n-2)+n+6)+\alpha(10k^2-6k n-n)+3n-6k\big)\\
&+x^4\big(\alpha^2k^2(6k^2+3k(4-3n)+2n^2-5n-6)+\alpha k(k^3-24k^2+2k(11n+3)-n(4n+1))\\
&\quad-k^3+12k^2-13k n+2n^2\big)\\
&+x^3  \big(\alpha^2k^2(-4k^2+k(9n-10)-4n^2+7n+2)\\
&\quad+\alpha k(-4k^3+k^2(3n+32)-2k(19n+4)+5n(2n+1)) +2k^3-k^2(3n+10)+17k n-6n^2\big)\\
&+x^2(k-n)\big(\alpha^2 k^2(k-2n+3)+\alpha k(6k^2-k(3n+22)+12n+3)+3k(n+1)-4n\big)\\
&+x (n-k)^2 (-\alpha k(4k-n-6)-2k-n)\\
&+ (\alpha k+1) (k-n)^3.
\end{aligned}\end{equation}
In view of the above relations, in order to prove that the gradient terms are non-positive, it remains to find out for which $\{k, n, \alpha\}$, $Q=Q(x,k,n,\alpha)$ is non-positive for any $x>0$.

Note that $Q(x,k,n,\alpha)$ can also be regarded as a quadratic polynomial of $\alpha$,  and the coefficient of $\alpha^2$ is given by
$$
k^2 x^2 (x-1)^2 \Big((k^2+k-2)x^2+(n(3k+1)-2k^2-4k+2)x+(n-k)(2n-k-3)\Big).
$$
It is obvious that when $1\leq k\leq n-1$, we have $k^2+k-2\geq0$, $n(3k+1)-2k^2-4k+2\geq (k+1)(3k+1)-2k^2-4k+2=k^2+3>0$ and $(n-k)(2n-k-3)\geq 0$.
When $k=n$, we have $(k^2+k-2)x^2+(n(3k+1)-2k^2-4k+2)x+(n-k)(2n-k-3)=(n^2+n-2)x^2+(n-1)(n-2)x\ge 0.$
Hence, $Q(x,k,n,\alpha)$ is a convex function in $\alpha$ for all $n \geq 3 $, $1\le k\le n$ and $x\ge 0$, which means that in order to prove that there exists a constant $ c_0(n,k) >\frac{1}{k}$ such that $Q$ is non-positive for any $x>0$ and $\alpha\in[1/k, c_0(n,k) ]$, we only need to prove that for each $n$ and any fixed $k$ with $1\leq k\leq n$, there exists a constant $ c_0(n,k) >\frac{1}{k}$ such that
\begin{itemize}
\item [(i)] $Q$ is non-positive for any $x>0$ and $\alpha=1/k$.
\item [(ii)] $Q$ is non-positive for any $x>0$ and $\alpha= c_0(n,k) $.
\end{itemize}
The conclusion in (i) is trivial:
$$
Q(x,k,n,1/k)=-2 (n+(x-1) (k+x))^3<0,~\forall~x>0,~1\leq k\leq n.
$$

In the case $1<k\le n \le k^2$, we will prove that $ c_0(n,k) =\frac{1}{k-1}$ satisfies the conclusion in (ii).
For general case, by applying Sturm's theorem, we will prove that for each $n$ and any fixed $k$ with $1\leq k\leq n$, there exists a constant $ c_0(n,k) >\frac{1}{k}$ such that the conclusion in (ii) holds. We can also prove that $ c_0(n,1)\ge 1+\frac{7}{n}$ and $ c_0(n,k) \geq \frac{1}{k}+\frac{k}{(k-1)n}$ for $k\ge 2$ and $n\geq k^2$. Since the proof is long and technical, we will give the proof in the appendix, see Propositions  \ref{thmA1}-\ref{thmA4}.

Once we have obtained that for each $n$ and any fixed $k$ with $1\leq k\leq n$, there exists a constant $ c_0(n,k) >\frac{1}{k}$ such that $Q(x,k,n,\alpha)$ is non-positive for any $x>0$ and $\alpha\in[1/k, c_0(n,k) ]$, we can apply the maximum principle directly to conclude that
 the maximum of the quantity $G$
is non-increasing in time.

\textbf{Case 2: $\epsilon=1$.} In order to estimate the zero-order terms, we first prove the following lemma.
\begin{lem}\label{lemkappa1}
Under the same assumption of Theorem \ref{thm3.1}, for the curvature functions $F=\sigma_k^\alpha$  and  $G=F^2\sum_{i<j}(\frac{1}{\lambda_i}-\frac{1}{\lambda_j})^2$, we have
	\begin{equation}\label{Q2}
\begin{aligned}
	&\epsilon\big((1+k\alpha)F \dot{G}^{ij}g_{ij}-\dot{F}^{ml}g_{ml}\dot{G}^{ij}h_{ij}\big)\\
=&\frac{2\epsilon F G}{\lambda_1\lambda_2(\lambda_1 k+\lambda_2(n-k))}\cdot\\
&\Big(k((k-2)\alpha-1)\lambda_1^2-(k\alpha(2k-n-2)+n)\lambda_1\lambda_2-(n-k)(1+k\alpha)\lambda_2^2\Big).
\end{aligned}
	\end{equation}
\end{lem}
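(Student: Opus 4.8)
The plan is to evaluate the left-hand side of \eqref{Q2} at a fixed point in a frame that diagonalizes the Weingarten map, reduce the tensorial contractions to sums over the principal curvatures, and then exploit the homogeneity of $F$ and $G$ together with the product structure $G=F^{2}\Phi$, where $\Phi=\sum_{i<j}(\lambda_i^{-1}-\lambda_j^{-1})^{2}$, to collapse most of the expression before inserting the axially symmetric data. This is the same machinery used in the $\epsilon=0$ analysis above, the only difference being that here the zero-order terms do not vanish and must be computed explicitly.

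First I would choose local coordinates at the point as in the proof of Lemma \ref{lem3.2}, so that $g_{ij}=\delta_{ij}$ and $h^{i}_{j}=\text{diag}(\lambda_1,\dots,\lambda_n)$. Then $\dot G^{ij}g_{ij}=\sum_i\dot g^{i}$, $\dot F^{ml}g_{ml}=\sum_i\dot f^{i}$, and, since $G$ is homogeneous of degree $b=2(k\alpha-1)$, Euler's relation \eqref{2.2} gives $\dot G^{ij}h_{ij}=\sum_i\lambda_i\dot g^{i}=bG$. Hence the left-hand side of \eqref{Q2} equals $\epsilon\big[(1+k\alpha)F\sum_i\dot g^{i}-bG\sum_i\dot f^{i}\big]$. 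Next I would differentiate $G=F^{2}\Phi$ to obtain $\dot g^{i}=2F\dot f^{i}\Phi+F^{2}\dot\Phi^{i}$, hence $\sum_i\dot g^{i}=2F\Phi\sum_i\dot f^{i}+F^{2}\sum_i\dot\Phi^{i}$. Substituting this into the bracket and using $G=F^{2}\Phi$ once more, the terms proportional to $\sum_i\dot f^{i}$ combine with coefficient $2(1+k\alpha)-b=4$; this cancellation is the key simplification, and it leaves
\[
\epsilon\Big[(1+k\alpha)F\sum_i\dot g^{i}-bG\sum_i\dot f^{i}\Big]=\epsilon\, FG\Big(\frac{4\sum_i\dot f^{i}}{F}+(1+k\alpha)\frac{\sum_i\dot\Phi^{i}}{\Phi}\Big).
\]

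It then remains to evaluate the two traces for an axially symmetric configuration $\lambda_2=\dots=\lambda_n$. From $\dot f^{i}=\alpha\sigma_k^{\alpha-1}\dot\sigma_k^{i}$ and the explicit values of $\dot\sigma_k^{1},\dot\sigma_k^{2}$ recorded in \S\ref{sec3.2} one finds $\sum_i\dot f^{i}/F=\alpha k\big((k-1)\lambda_1+(n-k+1)\lambda_2\big)\big/\big(\lambda_2(k\lambda_1+(n-k)\lambda_2)\big)$; and from $\dot\Phi^{1}=-2(n-1)\lambda_1^{-2}\phi$, $\dot\Phi^{2}=2\lambda_2^{-2}\phi$ with $\phi=(\lambda_2-\lambda_1)/(\lambda_1\lambda_2)$, together with $\Phi=(n-1)\phi^{2}$, one gets $\sum_i\dot\Phi^{i}/\Phi=-2(\lambda_1+\lambda_2)/(\lambda_1\lambda_2)$. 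Placing these over the common denominator $\lambda_1\lambda_2(k\lambda_1+(n-k)\lambda_2)$ turns the right side into
\[
\frac{2\epsilon FG}{\lambda_1\lambda_2(k\lambda_1+(n-k)\lambda_2)}\Big(2\alpha k\lambda_1\big((k-1)\lambda_1+(n-k+1)\lambda_2\big)-(1+k\alpha)(\lambda_1+\lambda_2)\big(k\lambda_1+(n-k)\lambda_2\big)\Big),
\]
and expanding this quadratic form in $\lambda_1,\lambda_2$ and collecting the coefficients of $\lambda_1^{2}$, $\lambda_1\lambda_2$, $\lambda_2^{2}$ produces exactly $k((k-2)\alpha-1)$, $-(k\alpha(2k-n-2)+n)$ and $-(n-k)(1+k\alpha)$, which is \eqref{Q2}.

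There is no conceptual obstacle here; the difficulty is purely one of bookkeeping. The steps I would check most carefully are the evaluation of $\sum_i\dot\sigma_k^{i}$ on the axially symmetric configuration (equivalently, $(n-k+1)\sigma_{k-1}$ restricted to $\lambda_2=\dots=\lambda_n$) and the final expansion — in particular the coefficient of $\lambda_1\lambda_2$, where the constant $n-k+1$ coming from the $F$-trace must combine correctly with the $n$ coming from the $(\lambda_1+\lambda_2)$ factor.
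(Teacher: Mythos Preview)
Your proof is correct and follows essentially the same approach as the paper's own proof: both diagonalize, use Euler's relation to get $\dot G^{ij}h_{ij}=2(k\alpha-1)G$, expand $\sum_i\dot g^i$ via the product structure $G=F^2\Phi$, and insert the axially symmetric values of $\sum_i\dot f^i/F$ and $\sum_i\dot\Phi^i/\Phi$. Your explicit identification of the coefficient $2(1+k\alpha)-b=4$ is a nice way to organize the cancellation, but the underlying computation is identical to the paper's equations \eqref{gi}--\eqref{g0}.
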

\begin{proof}
Using the formulas in the proof of Case 1 for $\epsilon=0$, we have
\begin{equation}\label{gi}
\dot{G}^{ij}g_{ij}=\dot{g}^1+\cdots+\dot{g}^n=2 F\Phi(\dot{f}^1+\cdots+\dot{f}^n)-2F^2\Phi(\frac{1}{\lambda_1}+\frac{1}{\lambda_2}),
\end{equation}
and
\begin{equation}\label{fi}
\dot{F}^{ij}g_{ij}=\dot{f}^1+\cdots+\dot{f}^n=\frac{k\alpha F(\lambda_1(k-1)+\lambda_2(n+1-k))}{\lambda_2(\lambda_1 k+\lambda_2(n-k))}.
\end{equation}
On the other hand, since $G$ is homogeneous of degree $2(k\alpha-1)$, we have
\begin{equation}\label{g0}
\dot{G}^{ij}h_{ij}=2(k\alpha-1) G.
\end{equation}
\eqref{Q2} follows immediately from \eqref{gi}, \eqref{fi} and \eqref{g0}.
\end{proof}

For any $n\geq 3$ and fixed $k$ with $1\leq k\leq n$, we define
\begin{equation}\label{c2nk}
c_2(n,k)=\left\{
\begin{aligned}
 & \frac{4\sqrt{n(n-k)(n+2-2k)}-2k(n+2)+n(6+n)}{k(n-2)^2}, \\
 &\quad~\text{if}~ k=1,2,~n\geq3,~\text{or}~k\geq 3,~n>k(k-1); \\
 & \frac{1}{k-2},~\text{otherwise}.
\end{aligned}
\right.
\end{equation}
We have the following claim:
\begin{cla}\label{cla1}
If $\alpha\in[1/k, c_2(n,k)]$, then for all $\lambda_1>0,\lambda_2>0$, we have
\begin{equation*}
k((k-2)\alpha-1)\lambda_1^2-(k\alpha(2k-n-2)+n)\lambda_1\lambda_2-(n-k)(1+k\alpha)\lambda_2^2\leq 0.
\end{equation*}
\end{cla}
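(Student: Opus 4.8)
The plan is to treat the left-hand side as a homogeneous quadratic form in $(\lambda_1,\lambda_2)$, divide by $\lambda_2^2>0$, and set $x=\lambda_1/\lambda_2>0$, so that Claim~\ref{cla1} becomes the one-variable statement: the quadratic $q(x)=Ax^2+Bx+C$ is nonpositive for every $x>0$, where $A=k((k-2)\alpha-1)$, $B=-(k\alpha(2k-n-2)+n)=k(n+2-2k)\alpha-n$ and $C=-(n-k)(1+k\alpha)$. The first, trivial observation is $C\le 0$, since $k\le n$ and $\alpha>0$ (with $C<0$ when $k<n$). Given $C\le 0$, I will use the elementary criterion: $q\le 0$ on $[0,\infty)$ if and only if $A\le 0$, and in addition either $B\le 0$, or $[A<0$ and $B^2-4AC\le 0]$; the ``only if'' direction is a short discussion of the vertex and of the behaviour at infinity.

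The next step, and the computational core, is to evaluate two auxiliary expressions. First, $AC=-k(n-k)\big(k(k-2)\alpha^2-2\alpha-1\big)$, so that whenever $B^2-4AC\le 0$ one gets $AC\ge\tfrac14 B^2\ge 0$; combined with $C\le 0$, and $C<0$ when $k<n$, this forces $A\le 0$ for free. Second, I would expand $B^2-4AC$ as a quadratic in $\alpha$,
\[
B^2-4AC=k^2(n-2)^2\alpha^2-2k\big(n^2+6n-2k(n+2)\big)\alpha+(n-2k)^2,
\]
using the identity $(n+2-2k)^2+4(k-2)(n-k)=(n-2)^2$ for the leading coefficient. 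This quadratic in $\alpha$ is convex; factoring its discriminant as a difference of squares, $\big(n^2+6n-2k(n+2)\big)^2-(n-2)^2(n-2k)^2=16n(n-k)(n+2-2k)$, its roots are $\dfrac{n^2+6n-2k(n+2)\pm4\sqrt{n(n-k)(n+2-2k)}}{k(n-2)^2}$. In the first branch of \eqref{c2nk} (where $n+2-2k>0$ and $n-k>0$ hold automatically), the larger root is exactly $c_2(n,k)$.

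With these computations in hand the argument splits along the two branches of \eqref{c2nk}. In the first branch, evaluating at $\alpha=1/k$ gives $B^2-4AC=4\big((k-1)^2-4(n-k)\big)<0$ (a short check from the branch hypotheses, e.g.\ $n-k\ge(k-1)^2$ when $k\ge 3$), so $1/k$ lies strictly between the two roots; hence $[1/k,c_2(n,k)]$ lies in the closed interval between the roots, on which $B^2-4AC\le 0$. There $AC\ge 0$, so $A\le 0$; if $A<0$ then $q$ has nonpositive discriminant and negative leading coefficient, so $q\le 0$ on all of $\mathbb{R}$, and if $A=0$ then $B^2-4AC\le 0$ forces $B=0$, so $q\equiv C\le 0$. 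In the second branch ($k\ge 3$, $n\le k(k-1)$, $c_2=\tfrac1{k-2}$), I would instead verify directly that $A\le 0$ (since $\alpha\le\tfrac1{k-2}$ gives $(k-2)\alpha\le 1$) and $B\le 0$: as $B$ is affine in $\alpha$, its maximum over $[1/k,\tfrac1{k-2}]$ is attained at an endpoint, where $B=2-2k<0$ at $\alpha=1/k$ and $B=\dfrac{2(n-k(k-1))}{k-2}\le 0$ at $\alpha=\tfrac1{k-2}$. Then $A\le 0$, $B\le 0$, $C\le 0$ give $q(x)=Ax^2+Bx+C\le 0$ for all $x\ge 0$ term by term.

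I expect the main obstacle to be the algebraic identification of $c_2(n,k)$ with the larger root of $B^2-4AC=0$: one has to expand $B^2$, $AC$ and the inner discriminant carefully and spot the three simplifications $(n+2-2k)^2+4(k-2)(n-k)=(n-2)^2$, the constant term $(n-2k)^2$, and the inner discriminant $16n(n-k)(n+2-2k)$ — these are precisely what turn $c_2(n,k)$ into the closed form in \eqref{c2nk}. Everything afterwards is routine sign-chasing, and a brief separate computation shows $c_2(n,k)>1/k$ (strictly, in both branches), so the interval in the statement is genuinely nonempty.
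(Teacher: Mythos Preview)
Your proof is correct and follows essentially the same strategy as the paper: in the first branch of \eqref{c2nk} you show the discriminant $B^2-4AC$ is nonpositive (the paper phrases this as the discriminant of the quadratic in $\lambda_2$ being nonpositive, with leading coefficient $C<0$), and in the second branch you verify all three coefficients $A,B,C$ are nonpositive. The only cosmetic difference is that you normalize to $x=\lambda_1/\lambda_2$ and supply the explicit algebra identifying $c_2(n,k)$ as the larger root of $B^2-4AC=0$, whereas the paper works directly with the quadratic in $\lambda_2$ and states the discriminant claim without the intermediate expansion.
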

\begin{proof}
We discuss the following two cases.

(i) In the case either $k=1,2,~n\geq3$, or $k\geq 3,~n>k(k-1)$, if
$$\frac{1}{k}\leq\alpha\leq
\frac{4\sqrt{n(n-k)(n+2-2k)}-2k(n+2)+n(6+n)}{k(n-2)^2},$$
then the discriminant of the  quadratic equation of $\lambda_2$
\begin{equation*}
k((k-2)\alpha-1)\lambda_1^2-(k\alpha(2k-n-2)+n)\lambda_1\lambda_2-(n-k)(1+k\alpha)\lambda_2^2=0
\end{equation*}
is non-positive, which implies that
\begin{equation*}
k((k-2)\alpha-1)\lambda_1^2-(k\alpha(2k-n-2)+n)\lambda_1\lambda_2-(n-k)(1+k\alpha)\lambda_2^2
\end{equation*}
is non-positive, since the coefficient of $\lambda_2^2$ is negative.

(ii) In the remaining cases, i.e., either (a) $k\geq 3,~n+2\leq 2k$, or (b) $k\geq 3,~n+2>2k,~n\leq k(k-1)$,
if $\frac{1}{k}\leq \alpha\leq\frac{1}{k-2}$, we will prove that all the coefficients of the quadratic polynomial
\begin{equation*}
k((k-2)\alpha-1)\lambda_1^2-(k\alpha(2k-n-2)+n)\lambda_1\lambda_2-(n-k)(1+k\alpha)\lambda_2^2
\end{equation*}
are non-positive.
(a) When $k\geq 3,~n+2\leq 2k$, $\frac{1}{k}\leq\alpha\leq\frac{1}{k-2}$, the above conclusion is obvious.
(b) When $k\geq 3,~n+2>2k,~n\leq k(k-1)$, $\frac{1}{k}\leq \alpha\leq\frac{1}{k-2}$, then we have
\begin{equation*}
-(k\alpha(2k-n-2)+n)\leq -(k\cdot\frac{1}{k-2}\cdot(2k-n-2)+n)=\frac{2(n-k(k-1))}{k-2}\leq0.
\end{equation*}
\end{proof}
Finally, in order to apply the maximum principle, we need to show that the right-hand side of \eqref{evog} has a desired sign at stationary points of $G$. In view of Lemma \ref{lem3.2}, Lemma \ref{lemkappa1} and Claim \ref{cla1}, we obtain that for any $n \geq 3$ and fixed $k$ with $1\leq k\leq n$, if $\alpha\in[1/k, c_2(n,k)]$, where $c_2(n,k)$ is defined by \eqref{c2nk}, then the zero-order terms of the right-hand side of \eqref{evog} are non-positive. From the proof of Case 1 for $\epsilon=0$, we obtain that the gradient terms are non-positive if $\alpha\in[1/k, c_0(n,k) ]$.
Therefore, if $\alpha\in[1/k, c_1(n,k) ]$, with $c_1(n,k)=\min\{c_0(n,k),~c_2(n,k)\}$, then the
right-hand side of \eqref{evog} is non-positive at stationary points of $G$,
and we can apply the maximum principle directly to conclude that
 the maximum of the quantity $G$
is non-increasing in time.

\begin{rem}
In the appendix, we will prove that  $c_0(3,1)=3.64...,~ c_0(4,1)=2.93...$ (see \eqref{listc0}), since $c_1(n,k)=\min\{c_0(n,k),~c_2(n,k)\}$, from the expression of
$c_2(n,k)$ given by \eqref{c2nk},  we can easily obtain that $c_1(3,1)=3.64...,~ c_1(4,1)=2.93...$.
We prove in the appendix that in the case $1<k\le n \le k^2$, we can choose $c_0(n,k)=\frac{1}{k-1}$.
We also show that $ c_0(n,1)\ge 1+\frac{7}{n}$ and $ c_0(n,k) \geq \frac{1}{k}+\frac{k}{(k-1)n}$ for $k\ge 2$ and $n\geq k^2$.
Since $c_2(n,k)$ is written explicitly, by elementary calculation, we can prove directly that   $c_2(n,k)\geq \frac{1}{k-1}$ in the case $1<k\le n \le k^2$,
$ c_2(n,1)\ge 1+\frac{7}{n}$ and $ c_2(n,k) \geq \frac{1}{k}+\frac{k}{(k-1)n}$ for $k\ge 2$ and $n\geq k^2$. As $c_1(n,k)=\min\{c_0(n,k),~c_2(n,k)\}$, we conclude that
we can choose $c_1(n,k)=\frac{1}{k-1}$ in the case $1<k\le n \le k^2$, $ c_1(n,1)\ge 1+\frac{7}{n}$,   and $ c_1(n,k) \geq \frac{1}{k}+\frac{k}{(k-1)n}$ for $k\ge 2$ and $n\geq k^2$.
\end{rem}

\section{Proof of Theorems \ref{thm1.1}-\ref{thm1.2}}\label{sec4}
\subsection{Contraction to a point}\label{sec4.1}
When $\epsilon=0,1$, any closed convex hypersurface in $\mathbb{R}^{n+1}(\epsilon)$ bounds a convex body in $\mathbb{R}^{n+1}(\epsilon)$. We define the
 inner radius $\rho_-(t)$ and the outer radius $\rho_+(t)$ as follows.
\begin{equation*}
\begin{aligned}
\rho_-(t) =& \sup\{r:B_r(y) \textrm{ is enclosed by } \hat{M}_t ~ \textrm{for some}~ y\in\mathbb{R}^{n+1}(\epsilon)\},\\
\rho_+(t) =& \inf\{r:B_r(y) \textrm{ encloses } \hat{M}_t  ~\textrm{for some}~ y\in\mathbb{R}^{n+1}(\epsilon)\},
\end{aligned}
\end{equation*}
where $\hat{M}_t$ is the convex body enclosed by $M_t$, $B_r(y)$ is a geodesic ball in $\mathbb{R}^{n+1}(\epsilon)$ with center $y\in\mathbb{R}^{n+1}(\epsilon)$, and $r$
is the radius of the geodesic ball $B_r(y)$ in geodesic polar coordinates. It follows from the scalar parabolic equation \eqref{s2:ut-evl} that the convex bodies $\hat{M}_t$ satisfies that
\begin{equation*}
t_1<t_2~\Longrightarrow~\hat{M}_{t_2}\subset\hat{M}_{t_1}.
\end{equation*}
Recall that we obtained some pinching estimates in Section 3, see \eqref{3.1} and \eqref{3.14}. In particular, by using the pinching estimate \eqref{3.14}, after an analogous
argument to that in \cite[\S 3]{SCHULZE2006} (for the case $\epsilon=0$) and \cite[\S 6]{Gerhardt2015} (for the case $\epsilon=1$), we can obtain that there exists
a positive constant $C_4$ which only depends on $M_0$ such that
\begin{equation}\label{pinchingrho}
\rho_+(t)\leq C_4\rho_-(t), ~\textrm{for all} ~t\in[0,T) ~\textrm{when}~ \epsilon=0, \textrm{ and for} ~t\in[t_\delta,T)~ \textrm{when}~ \epsilon=1,
\end{equation}
where $t_\delta=T-\delta$ and $\delta$ is sufficiently small. Then we can apply a technique of Tso \cite{Tso1985} to prove that the $\sigma_k$-curvature remains bounded from above (see Lemma \ref{upper0} and Lemma \ref{upper1} below) as long as
the flow \eqref{flow} bounds a non-vanishing volume, which together with the pinching estimate \eqref{3.14} implies a uniform upper bound for the principal curvatures.
Since the flow hypersurface $M_t$ is also uniformly strictly convex, we obtain that the flow \eqref{flow} remains to be uniformly parabolic.
Since the speed function can be written in the form $F=\sigma_k^{\alpha}=(\sigma_k^{1/k})^{k\alpha}$ and $\sigma_k^{1/k}$ is a concave function of the principal curvatures,
we can apply the  H\"{o}lder estimate by Andrews \cite[Theorem 6]{andrews2004fully} (we can also apply the H\"{o}lder estimate in the case of one space dimension in  \cite{lieberman1996second}, since
axially symmetric hypersurface can be written
as a graph on the unit sphere in geodesic polar coordinates and the graph function  has only one space variable) and
parabolic Schauder estimate \cite{lieberman1996second} to get uniform $C^\infty$ estimates of the solution, hence the solution can be extended beyond $T$, which contradicts the maximality of $T$.
This means that the inner radius $\rho_-(t)$ converges to $0$ as $t\to T$. It follows from \eqref{pinchingrho}  that the outer radius $\rho_+(t)$ also converges to $0$ as $t\to T$.
Therefore, the flow hypersurfaces remain smooth until they shrink to a point.
\begin{rem}
We note that by using the Gauss map parametrization, Andrews, McCoy and Zheng \cite{AMZ2013} proved that for contracting flow of strictly convex hypersurfaces in Euclidean space with the speed function $F^\alpha$ satisfying that $\alpha>0$, $F$ is homogeneous of degree one, the dual function of $F$ is concave and approaches zero on the
boundary of the positive cone, the flow hypersurfaces will shrink to a point as $t$ approaches the maximal time $T$. Li and Lv \cite{LILV2019} obtained similar results for the
contracting flow in the sphere.
\end{rem}

\subsection{Rescaling and convergence  for the flow \eqref{flow} with $\epsilon=0$}\label{sec4.2}
In this subsection, we prove the last part of Theorem \ref{thm1.1} by adapting the arguments of F. Schulze in  \cite{SCHULZE2006} for $k=1$ with an initial curvature pinching condition.
As remarked in Remark \ref{rem1.3}, we will consider the case $\alpha\in(1/k,c_0(n,k)]$.
The key step is that we can write the evolution equation of $\tilde{\sigma}_k$-curvature of the rescaled hypersurface in a special form such that we can
apply the interior H\"{o}lder estimates due to DiBenedetto and Friedman \cite{DF1985} to get H\"{o}lder continuity of $\tilde{\sigma}_k$-curvatures on the rescaled hypersurfaces.
This method was also used by Alessandroni and Sinestrari in \cite{AS2010} for the flow by powers of the scalar curvature and by Cabezas-Rivas and Sinestrari in \cite{CS2010} where the volume-preserving flow by powers of the $k$-th mean curvature was studied.
As mentioned in \S\ref{sec4.1}, we can use a technique of Tso \cite{Tso1985} to show that as long as the unrescaled hypersurfaces $M_t,~t\in[0,T']$ enclose a fixed ball $B_{r}(y_0)$ for some $y_0\in\mathbb{R}^{n+1}$ and $r>0$, the $\sigma_k$-curvature of $M_t$ has a positive upper bound depending on $r$.

\begin{lem}\label{upper0}
Let $X:M\times [0,T)$ be a smooth strictly convex solution of \eqref{flow} for $\epsilon=0$. If all the unrescaled hypersurfaces $M_t$ on a time interval $[0,T']~(T'<T)$ enclose a fixed ball $B_{r}(y_0)$ for some $y_0\in\mathbb{R}^{n+1}$ and $r>0$, then we have
\begin{equation*}
\sigma_k (p,t) \leq C(M_0,r,k,\alpha,n),~\textrm{for~all}~(p,t)\in M\times[0,T'].
\end{equation*}
\end{lem}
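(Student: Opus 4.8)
The plan is to run the classical barrier argument of Tso \cite{Tso1985}, using an auxiliary function built from the speed $\sigma_k^\alpha$ and the support function relative to the fixed ball. After translating in $\mathbb{R}^{n+1}$ so that $y_0=0$, the strict convexity of $M_t$ together with $B_r(0)\subset\hat M_t$ gives a strictly positive support function $u=\langle X,\nu\rangle\ge r$ on $M_t$ for every $t\in[0,T']$, while $u\le|X|$ is bounded above by a constant depending only on $M_0$ (since $\hat M_t\subset\hat M_0$ and $0\in\hat M_t$). Under \eqref{flow} with $\epsilon=0$, the support function satisfies the Euclidean analogue of \eqref{s2:evl-chi},
\begin{equation*}
\frac{\partial}{\partial t}u-\mathcal{L}u=-(k\alpha+1)\sigma_k^{\alpha}+u\,\dot{(\sigma_k^{\alpha})}^{ml}h_{mr}h^r_l,
\end{equation*}
which one derives directly from \eqref{nu}, the identity $\nabla_iu=h_i^{\,k}\langle X,\partial_kX\rangle$, the Codazzi equations, and the homogeneity relation $\dot{(\sigma_k^{\alpha})}^{ij}h_{ij}=k\alpha\,\sigma_k^{\alpha}$.

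Next I would set $\tilde u=u-\tfrac r2\ge\tfrac r2>0$ and consider $Q=\sigma_k^{\alpha}/\tilde u$. Combining the evolution equation \eqref{2.5} for $\sigma_k^{\alpha}$ (with $\epsilon=0$) and the evolution of $u$ above, a direct computation shows that the quadratic-gradient terms in $\tilde u$ cancel and one is left with
\begin{equation*}
\frac{\partial}{\partial t}Q-\mathcal{L}Q=(k\alpha+1)\frac{(\sigma_k^{\alpha})^2}{\tilde u^2}-\frac r2\cdot\frac{\sigma_k^{\alpha}}{\tilde u^2}\,\dot{(\sigma_k^{\alpha})}^{ml}h_{mr}h^r_l+\frac{2}{\tilde u}\,\dot{(\sigma_k^{\alpha})}^{ij}\nabla_iQ\,\nabla_j\tilde u,
\end{equation*}
i.e. only a term linear in $\nabla Q$ survives. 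Applying the parabolic maximum principle on $M\times[0,T']$ (via Hamilton's trick, $M$ being compact): at a point where $Q(\cdot,t)$ attains its spatial maximum one has $\nabla Q=0$ and $\mathcal{L}Q\le0$, so the last two terms drop; invoking the lower bound $\dot{(\sigma_k^{\alpha})}^{ml}h_{mr}h^r_l\ge\tfrac{k\alpha}{\binom{n}{k}^{1/k}}(\sigma_k^{\alpha})^{1+\frac1{k\alpha}}$ from Lemma \ref{lem2.1}(iii) and $\tilde u\ge\tfrac r2$ converts this into the differential inequality
\begin{equation*}
\frac{d}{dt}Q_{\max}(t)\le(k\alpha+1)\,Q_{\max}^2-c(n,k,\alpha,r)\,Q_{\max}^{\,2+\frac1{k\alpha}},\qquad c(n,k,\alpha,r)>0.
\end{equation*}
Since the exponent $2+\tfrac1{k\alpha}$ strictly exceeds $2$, the right-hand side is negative for $Q_{\max}$ large, so comparison with the associated ODE gives $Q_{\max}(t)\le\max\{Q_{\max}(0),\bar Q\}$, where $\bar Q=\bar Q(n,k,\alpha,r)$ is the positive root of the right-hand side and $Q_{\max}(0)\le 2\max_{M_0}\sigma_k^{\alpha}/r$ depends only on $M_0$ and $r$. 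Finally $\sigma_k^{\alpha}=Q\,\tilde u\le Q\cdot\max_{M_0}|X|$, and since $Q$ is now bounded in terms of $M_0,r,k,\alpha,n$, we obtain $\sigma_k\le C(M_0,r,k,\alpha,n)$, as asserted.

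The only genuinely substantive point is to arrange, via Lemma \ref{lem2.1}(iii), that the ``good'' negative term carries the super-quadratic exponent $2+\tfrac1{k\alpha}$; this is precisely what makes the differential inequality self-improving and forces the a priori bound independently of $T'<T$. The computation of $\partial_tQ-\mathcal{L}Q$ and the cancellation of the $|\nabla\tilde u|^2$-type terms is routine, and the argument is insensitive both to the value of $k$ and to the choice of $\alpha\ge1/k$. (The same scheme will later furnish the sphere version needed in \S\ref{sec4}, with $\cos u$ corrections controlled by the boundedness of $\cos u$ on the shrinking convex hypersurfaces for $t$ near $T$.)
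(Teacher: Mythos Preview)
Your proof is correct and follows essentially the same route as the paper's: the auxiliary quantity $Q=\sigma_k^{\alpha}/(u-\tfrac r2)$ is exactly the paper's $\psi$, the evolution equation you derive matches equation-for-equation, and the use of Lemma~\ref{lem2.1}(iii) to produce the super-quadratic term and then apply the maximum principle is identical. The only cosmetic difference is that the paper leaves $y_0$ general rather than translating to the origin.
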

\begin{proof}
When $k=1$, this was proved by Schulze in \cite{SCHULZE2005,SCHULZE2006}. The case $k=2$ was prove by  Alessandroni and Sinestrari in \cite{AS2010}. For the case $k=n$, it was proved by Tso \cite{Tso1985} and Chow \cite{CHOW1985}.
For the general case, we use similar techniques.
We define $\psi=\frac{\sigma_k^\alpha}{\langle X-y_0,\nu\rangle-\frac{r}{2}}$, where $\langle \cdot,\cdot\rangle$ denotes the Euclidean metric. Since $M_t$ ($t\in [0,T']$)
enclose a fixed ball $B_{r}(y_0)$ for some $y_0\in\mathbb{R}^{n+1}$ and $r>0$,  we have $\langle X-y_0,\nu\rangle\geq r$, hence $\psi$ is
  well-defined on $[0,T']$. By using \eqref{flow}, \eqref{nu} and \eqref{2.5}, after a direct calculation, we have (cf. \cite{SCHULZE2005,CS2010})
\begin{equation*}
\begin{aligned}
\frac{\partial}{\partial t}\psi -\mathcal{L}\psi=&\frac{2}{\langle X-y_0,\nu\rangle-\frac{r}{2}} \dot{(\sigma_k^{\alpha})}^{ij}\nabla_i\psi\nabla_j\langle X-y_0,\nu\rangle\\
&+(k\alpha+1)\psi^2-\frac{r}{2}\psi^2\frac{\dot{(\sigma_k^{\alpha})}^{ml}h_{mr}h_l^r}{\sigma_k^{\alpha}}.
\end{aligned}
\end{equation*}
Since
$\sigma_k^{\alpha}=\psi(\langle X-y_0,\nu\rangle-\frac{r}{2})\geq \frac{r}{2}\psi$, using Lemma \ref{lem2.1} (iii), we then have
\begin{equation*}
\begin{aligned}
\frac{\partial}{\partial t}\psi -\mathcal{L}\psi\leq &\frac{2}{\langle X-y_0,\nu\rangle-\frac{r}{2}} \dot{(\sigma_k^{\alpha})}^{ij}\nabla_i\psi\nabla_j(\langle X-y_0,\nu\rangle)\\
&+(k\alpha+1-(r/2)^{1+\frac{1}{k\alpha}}\cdot\frac{k\alpha}{\binom{n}{k}^{1/k}}\psi^{\frac{1}{k\alpha}})\psi^2,
\end{aligned}
\end{equation*}
then by using the maximum principle, we obtain that
\begin{equation}\label{maxpsi}
\psi\leq \max\{\max_{M_0}\psi+1, (1+\frac{1}{k\alpha})^{k\alpha}\binom{n}{k}^\alpha(\frac{r}{2})^{-k\alpha-1}\},
\end{equation}
which gives an upper bound for $\sigma_k=(\psi(\langle X-y_0,\nu\rangle-\frac{r}{2}))^{1/\alpha}$:
\begin{equation*}
\sigma_k (p,t) \leq C(M_0,r,k,\alpha,n),~\textrm{for~all}~(p,t)\in M\times[0,T'].
\end{equation*}
\end{proof}

The following lemma gives the evolution of spheres in $\mathbb{R}^{n+1}$ along the $\sigma_k^{\alpha}$-flow \eqref{flow}.
\begin{lem}\label{lem4.1}
	Given $x_0\in \mathbb{R}^{n+1}, T\in \mathbb{R}^+$, we define
	\begin{equation*}
	\rho(t,T)=((k\alpha+1)\binom{n}{k}^\alpha (T-t))^{\frac{1}{k\alpha+1}},~\alpha>0,
	\end{equation*}
then the spheres $\partial B_{\rho(t,T)}(x_0)$ solve \eqref{flow} for $t\in[0,T)$, with $\rho_0=(T\cdot(k\alpha+1)\binom{n}{k}^\alpha)^{1/(k\alpha+1)}$ as the radius of the initial sphere.
\end{lem}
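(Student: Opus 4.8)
The plan is to verify directly that the stated family of round spheres satisfies \eqref{flow}, which reduces the claim to a scalar ODE for the radius. First I would fix a parametrization independent of $t$: writing $X(\omega,t)=x_0+\rho(t,T)\,\omega$ for $\omega\in\mathbb{S}^n\subset\mathbb{R}^{n+1}$, the hypersurface $M_t=\partial B_{\rho(t,T)}(x_0)$ has outer unit normal $\nu=\omega=\rho(t,T)^{-1}(X-x_0)$ at $X(\omega,t)$, and all its principal curvatures equal $\lambda_1=\cdots=\lambda_n=\rho(t,T)^{-1}$. Hence $\sigma_k=\binom{n}{k}\rho(t,T)^{-k}$ and the speed of the flow is $\sigma_k^\alpha=\binom{n}{k}^\alpha\rho(t,T)^{-k\alpha}$.

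Next, since $\dfrac{\partial X}{\partial t}(\omega,t)=\dot\rho(t,T)\,\omega=\dot\rho(t,T)\,\nu$ is purely normal, the immersion $X$ solves \eqref{flow} if and only if the radius function satisfies
\begin{equation*}
\dot\rho(t,T)=-\binom{n}{k}^\alpha\rho(t,T)^{-k\alpha},\qquad \rho(0,T)=\rho_0 .
\end{equation*}
Finally I would check that $\rho(t,T)=\big((k\alpha+1)\binom{n}{k}^\alpha(T-t)\big)^{1/(k\alpha+1)}$ solves this ODE: differentiating the identity $\rho(t,T)^{k\alpha+1}=(k\alpha+1)\binom{n}{k}^\alpha(T-t)$ in $t$ gives $(k\alpha+1)\rho^{k\alpha}\dot\rho=-(k\alpha+1)\binom{n}{k}^\alpha$, i.e. $\dot\rho=-\binom{n}{k}^\alpha\rho^{-k\alpha}$, and at $t=0$ one has $\rho(0,T)=\big(T\cdot(k\alpha+1)\binom{n}{k}^\alpha\big)^{1/(k\alpha+1)}=\rho_0$, as asserted.

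The argument is entirely elementary; there is no substantive obstacle. The only points that require a little care are the orientation and sign bookkeeping — confirming that, with $\nu$ the \emph{outer} normal, the right-hand side of \eqref{flow} points inward so that $\dot\rho<0$ and the spheres indeed contract — and the consistency check that $\rho(t,T)\to 0$ precisely as $t\to T$, so that $T$ is the maximal existence time of this sphere solution, matching the use of $\rho(t,T)$ as the model comparison radius in \S\ref{sec4.2}.
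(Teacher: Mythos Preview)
Your proof is correct and follows essentially the same approach as the paper: both reduce the flow equation for round spheres to the scalar ODE $\dot\rho=-\binom{n}{k}^\alpha\rho^{-k\alpha}$ with initial value $\rho_0$, and then verify that the given formula for $\rho(t,T)$ solves it. Your version simply makes explicit the parametrization, normal, and curvature computations that the paper leaves implicit.
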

\begin{proof}
	Since the flow \eqref{flow} preserves the symmetry, in the sphere case, the equation \eqref{flow} reduces to the following ODE for the radius of the spheres:
	\begin{equation}\label{4.2}
	\left\{\begin{aligned}
	 \frac{d}{dt}\rho(t,T)=&~-\binom{n}{k}^{\alpha}\rho(t,T)^{-k\alpha},\\
	\rho(0,T)=&~\rho_0.
	\end{aligned}\right.
	\end{equation}
	Then the conclusion follows immediately by solving \eqref{4.2}.
\end{proof}

Lemma \ref{lem4.1} suggests us the following rescaling:
\begin{defn}
The rescaled immersions are defined by
\begin{equation}\label{4.3}
\tilde{X}(p,t):= (X(p,t)-q)\rho(t,T)^{-1}=((k\alpha+1)\binom{n}{k}^\alpha (T-t))^{-\frac{1}{k\alpha+1}}(X(p,t)- q),
\end{equation}
where $q$ is the point in $\mathbb{R}^{n+1}$ where the flow hypersurfaces contract to and $T$ is the maximal existence time of the flow.
\end{defn}
If there is no confusion, we will denote $\rho(t,T)$ by $\rho$ for short in the sequel. We use $\tilde{g}_{ij}$, $\tilde{A}=\{\tilde{h}_{ij}\}$ and $\tilde{\mathcal{W}}=\{\tilde{h}^i_j\}$ to denote  the components of  induced metric, the  second fundamental form and the Weingarten map of the rescaled hypersurfaces $\tilde{M}_t$, respectively. Then we have
\begin{equation}\label{relationg}
\begin{aligned}
&\tilde{g}_{ij}=\rho^{-2}g_{ij},~\tilde{g}^{ij}(p,t)=\rho^{2}g^{ij},\\
&\tilde{h}_{ij}=\rho^{-1}h_{ij},~\tilde{h}_i^j(p,t)=\rho h_i^j,
\end{aligned}
\end{equation}
and so on. We note that the Christoffel symbols $\tilde{\Gamma}_{ij}^k$ of the metric $\tilde{g}_{ij}$ of  the rescaled hypersurfaces are the same as that of the unrescaled hypersurfaces, so we still use
$\nabla$ to denote the  Levi-Civita connections on the rescaled hypersurfaces.

We define a new time function $\tau=\tau(t)$ by
\begin{equation}\label{tau}
\tau:= -(k\alpha+1)^{-1}\binom{n}{k}^{-\alpha} \log(1-\frac{t}{T}).
\end{equation}
Then  $\tau(0)=0$ and  $\tau$ ranges from $0$ to $\infty$. It is not difficult to obtain that the rescaled immersions satisfy the following evolution equation
\begin{equation*}
\frac{\partial \tilde{X}}{\partial \tau}=-\tilde{\sigma}_k^{\alpha}\nu+\binom{n}{k}^{\alpha}\tilde{X}.
\end{equation*}

By using \eqref{pinchingrho}, we can apply a similar argument to that in Lemma 7.2 of \cite{A1994CVPDE} to obtain uniform bounds for $\tilde{\rho}_-=\rho_-/\rho$ and $\tilde{\rho}_+=\rho_+/\rho$, the main idea is that by the comparison principle, the ball $B_{\rho(t,T)}(q)$ intersects the flow hypersurface $M_t$ for any $t\in[0,T)$.

\begin{lem}\label{lemrho}
\begin{equation*}
\frac{1}{C_4} \le \tilde{\rho}_- \le 1 \le \tilde{\rho}_+ \le C_4,~\textrm{for all}~ \tau \ge 0.
\end{equation*}
\end{lem}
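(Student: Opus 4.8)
\emph{Proof proposal.} The plan is to reduce Lemma \ref{lemrho} to the sharp two–sided comparison
\[
\rho_-(t)\ \le\ \rho(t,T)\ \le\ \rho_+(t)\qquad\text{for all }t\in[0,T),
\]
and then to combine it with the radius pinching \eqref{pinchingrho}. Since the new time variable \eqref{tau} is an increasing bijection from $t\in[0,T)$ onto $\tau\in[0,\infty)$, it suffices to prove the estimates for all $t\in[0,T)$. Granting the displayed inequalities, one gets at once $\tilde\rho_-=\rho_-/\rho(t,T)\le 1$ and $\tilde\rho_+=\rho_+/\rho(t,T)\ge 1$; moreover, using \eqref{pinchingrho} (which for $\epsilon=0$ holds on all of $[0,T)$) together with $\rho_-\le\rho(t,T)\le\rho_+$,
\[
\tilde\rho_+=\frac{\rho_+}{\rho(t,T)}\le\frac{\rho_+}{\rho_-}\le C_4,\qquad
\tilde\rho_-=\frac{\rho_-}{\rho(t,T)}\ge\frac{\rho_-}{\rho_+}\ge\frac1{C_4},
\]
which is exactly the assertion of Lemma \ref{lemrho}. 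So everything reduces to the displayed comparison.

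For the lower bound I would fix $t_0\in[0,T)$ and let $\overline{B_{\rho_-(t_0)}(y)}$ be a largest geodesic ball enclosed by the convex body $\hat M_{t_0}$. Pick $T'$ with $\rho(t_0,T')=\rho_-(t_0)$; since $\rho(t_0,\cdot)$ is strictly increasing with $\rho(t_0,t_0)=0$, we have $T'>t_0$. By Lemma \ref{lem4.1} the family $\{\partial B_{\rho(t,T')}(y)\}$ is a solution of \eqref{flow} equal to $\partial B_{\rho_-(t_0)}(y)$ at time $t_0$ and shrinking to the point $y$ as $t\to T'$. Both $M_{t_0}$ and this sphere are strictly convex, hence admissible for the flow, so the comparison principle for \eqref{flow} forces $M_t$ to enclose $\partial B_{\rho(t,T')}(y)$ for every $t\in[t_0,\min\{T,T'\})$, and in particular $\rho_-(t)\ge\rho(t,T')$ there. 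If $T'>T$ were true, then letting $t\to T^-$ would force $\rho_-(t)\to 0$ (the flow contracts $M_t$ to a point as $t\to T$, by \S\ref{sec4.1}) while $\rho(t,T')\to\rho(T,T')>0$, a contradiction; hence $T'\le T$ and, by monotonicity of $\rho(t_0,\cdot)$, $\rho_-(t_0)=\rho(t_0,T')\le\rho(t_0,T)$. Since $t_0$ was arbitrary, $\rho_-\le\rho(\cdot,T)$ on $[0,T)$.

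The upper bound is entirely parallel: fix $t_0$, let $\overline{B_{\rho_+(t_0)}(z)}$ be the smallest geodesic ball containing $\hat M_{t_0}$, and pick $T''$ with $\rho(t_0,T'')=\rho_+(t_0)$. By Lemma \ref{lem4.1}, $\{\partial B_{\rho(t,T'')}(z)\}$ solves \eqref{flow} from time $t_0$ and shrinks to $z$ as $t\to T''$, so the comparison principle gives $\hat M_t\subseteq\overline{B_{\rho(t,T'')}(z)}$ for $t\in[t_0,\min\{T,T''\})$, whence $\rho_+(t)\le\rho(t,T'')$ there. Letting $t\to T''^-$ would collapse $\hat M_t$ to the single point $z$, so the flow must become extinct by time $T''$; since its maximal existence time is $T$, this yields $T\le T''$ and therefore $\rho(t_0,T)\le\rho(t_0,T'')=\rho_+(t_0)$.

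The single delicate point is the comparison-of-extinction-times step used in both halves — that a flow hypersurface can neither outlive a smaller sphere solution it encloses nor a larger sphere solution that encloses it. This is the only place the avoidance/comparison principle for \eqref{flow} is invoked, and it relies on two facts established earlier: the flow hypersurfaces remain strictly convex (hence admissible), by \S\ref{sec3}, and $M_t$ genuinely contracts to a point with maximal time $T$, by \S\ref{sec4.1}. With these in hand the chain of inequalities above completes the proof; this is the quantitative form of the heuristic, mentioned before the statement, that $B_{\rho(t,T)}(q)$ must meet $M_t$ for every $t\in[0,T)$.
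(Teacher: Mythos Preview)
Your argument is correct and is essentially the approach the paper indicates: it is precisely the content of \cite[Lemma~7.2]{A1994CVPDE}, which the paper cites, namely using the avoidance/comparison principle between $M_t$ and the shrinking sphere solutions of Lemma~\ref{lem4.1} to trap $\rho(t,T)$ between $\rho_-(t)$ and $\rho_+(t)$, and then invoking \eqref{pinchingrho}. The paper's one-line hint that ``the ball $B_{\rho(t,T)}(q)$ intersects $M_t$'' is just a terse way of recording the same comparison (two sphere solutions with the same extinction data cannot be strictly nested), and your version with arbitrary in- and out-balls at each $t_0$ spells this out cleanly.
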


Now, we apply Lemma \ref{upper0} to get a uniform upper bound for the $\tilde{\sigma}_k$-curvature of the rescaled hypersurface $\tilde{M}_t$.
For any time $T'<T$, we choose $r=\rho_-(T')$ in Lemma \ref{upper0} and let $y_0$ be the corresponding center of the inner ball with radius $\rho_-(T')$. When
 $T'\geq t_1 $, where $t_1$  is a fixed time which satisfies that $(1+\frac{1}{k\alpha})^{k\alpha}\binom{n}{k}^\alpha(\frac{\rho(t_1,T)}{2})^{-k\alpha-1}> \max_{M_0}\psi+1$,
 since $\rho_-(T')\leq \rho(T',T)\leq \rho(t_1,T)$, we obtain from \eqref{maxpsi} that
 \begin{equation*}
\psi(T')\leq (1+\frac{1}{k\alpha})^{k\alpha}\binom{n}{k}^\alpha(\frac{\rho_-(T')}{2})^{-k\alpha-1},
\end{equation*}
which implies that
 \begin{equation*}
 \begin{aligned}
\tilde{\sigma}_k(T')&=\sigma_k(T')\rho(T',T)^{k}=\Big(\psi(T')(\langle X(T')-y_0,\nu(T')\rangle-\frac{\rho_-(T')}{2})\Big)^{1/\alpha}\rho(T',T)^{k}\\
&\leq (2\rho_+(T') (1+\frac{1}{k\alpha})^{k\alpha}\binom{n}{k}^\alpha(\frac{\rho_-(T')}{2})^{-k\alpha-1})^{1/\alpha}\rho(T',T)^{k}\\
&=2^{k+2/\alpha}(1+\frac{1}{k\alpha})^{k}\binom{n}{k}\frac{\tilde{\rho}_+^{1/\alpha}}{(\tilde{\rho}_-)^{k+1/\alpha}}\leq C(C_4,k,\alpha,n).
\end{aligned}
\end{equation*}
When $T'\leq t_1$, since $[0,t_1]$ is a fixed finite time interval, we immediately get an upper bound $C$ for
$\tilde{\sigma}_k(T')$ on $[0,t_1]$, and $C$ only depends on  $M_0$ and $k,~\alpha$ and $n$.
Therefore, we obtain an upper bound for $\tilde{\sigma}_k(t)$ for all $t\in[0,T)$. Equivalently, we obtain that
there exists a constant $C$ which only depends on $M_0$, $k,~\alpha$ and $n$ such that
\begin{equation}\label{uppersigmak0}
\tilde{\sigma}_k(\tau)\leq C,~\forall~\tau\in [0,\infty).
\end{equation}
\eqref{uppersigmak0} in combination with the pinching estimate \eqref{3.14} gives a uniform bound for the principal  curvatures of the rescaled hypersurfaces:
\begin{equation}\label{upperlambdai}
\tilde{\lambda}_{\text{max}}(\tau)\leq C,~\forall~\tau\in[0,\infty),
\end{equation}
where $C$ differs from the one in \eqref{uppersigmak0} up to a universal constant.
In order to apply the interior H\"{o}lder estimates due to DiBenedetto and Friedman \cite{DF1985}, we first prove that we can rewrite the evolution of
 $\tilde{\sigma}_k$ in the following form.

\begin{lem}\label{lemkeyeq}
\begin{equation}\label{keyeq}
\begin{aligned}
\frac{\partial}{\partial \tau}\tilde{\sigma}_k &= D_i\Big(\frac{\alpha}{d}(\tilde{\sigma}_k)^{\frac{1-k}{k}}(\dot{\tilde{\sigma}}_k)^{ij}D_j\big((\tilde{\sigma}_k)^d)\Big)
+\Gamma^i_{il}(\dot{\tilde{\sigma}}_k)^{lj}\cdot\frac{\alpha}{d}(\tilde{\sigma}_k)^{\frac{1-k}{k}}D_j
(\tilde{\sigma}_k)^d\\
&+(\tilde{\sigma}_k)^\alpha(\dot{\tilde{\sigma}}_k)^{ml}\tilde{h}_{mr}\tilde{h}_l^r-\tilde{\sigma}_k\binom{n}{k}^\alpha,
\end{aligned}
\end{equation}
where $d=\alpha+\frac{k-1}{k}$ and $D_i$ are the derivatives with respect to the local coordinates.
\end{lem}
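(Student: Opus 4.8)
The plan is to obtain \eqref{keyeq} by transporting the unrescaled evolution equation \eqref{sigmakevo} for $\sigma_k$ (in the case $\epsilon=0$) to the rescaled hypersurfaces $\tilde{M}_t$ and then rewriting its principal part as a divergence. From Lemma \ref{lem4.1} we have $\frac{d\rho}{dt}=-\binom{n}{k}^{\alpha}\rho^{-k\alpha}$, and \eqref{tau} gives $\frac{dt}{d\tau}=\rho^{k\alpha+1}$; hence, at a fixed point $p$,
\[
\frac{\partial\tilde{\sigma}_k}{\partial\tau}=\rho^{k\alpha+1}\frac{\partial}{\partial t}\big(\rho^{k}\sigma_k\big)=\rho^{\,k\alpha+k+1}\,\frac{\partial\sigma_k}{\partial t}+\sigma_k\,\frac{\partial(\rho^{k})}{\partial\tau},
\]
where the last summand is a constant multiple of $\binom{n}{k}^{\alpha}\tilde{\sigma}_k$ and supplies the zero-order term of \eqref{keyeq}. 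Using the scaling relations \eqref{relationg} together with $(\dot{\tilde{\sigma}}_k)^{ml}=\rho^{k+1}\dot{\sigma}_k^{ml}$ (which follows from $\tilde g^{ij}=\rho^{2}g^{ij}$ and the homogeneity of degree $k-1$ of $\dot{\sigma}_k$), the reaction term $\sigma_k^{\alpha}\dot{\sigma}_k^{ml}h_{mr}h^r_l$ of \eqref{sigmakevo}, after multiplication by $\rho^{k\alpha+k+1}$, becomes exactly $(\tilde{\sigma}_k)^{\alpha}(\dot{\tilde{\sigma}}_k)^{ml}\tilde h_{mr}\tilde h^r_l$. Hence the whole lemma reduces to rewriting the second-order part of $\rho^{k\alpha+k+1}\partial_t\sigma_k$.

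For that part I would first use $\dot{(\sigma_k^{\alpha})}^{ml}=\alpha\sigma_k^{\alpha-1}\dot{\sigma}_k^{ml}$ to collapse
\[
\dot{(\sigma_k^{\alpha})}^{ml}\nabla_m\nabla_l\sigma_k+\alpha(\alpha-1)\sigma_k^{\alpha-2}\dot{\sigma}_k^{ml}\nabla_m\sigma_k\nabla_l\sigma_k=\dot{\sigma}_k^{ml}\nabla_m\nabla_l(\sigma_k^{\alpha}).
\]
Since $\rho=\rho(t,T)$ is constant on $M$ and the Christoffel symbols of $\tilde g$ coincide with those of $g$ (cf. \S\ref{sec4.2}), multiplying by $\rho^{k\alpha+k+1}$ and inserting the scalings turns this into $(\dot{\tilde{\sigma}}_k)^{ml}\nabla_m\nabla_l(\tilde{\sigma}_k^{\alpha})$. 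Now I invoke Lemma \ref{lem2.1}(iv): because $\tilde{M}_t$ is an honest hypersurface of $\mathbb{R}^{n+1}$ whose Levi-Civita connection is $\nabla$, the tensor $(\dot{\tilde{\sigma}}_k)^{ij}$ is divergence free, so
\[
(\dot{\tilde{\sigma}}_k)^{ml}\nabla_m\nabla_l(\tilde{\sigma}_k^{\alpha})=\nabla_m\big((\dot{\tilde{\sigma}}_k)^{ml}\nabla_l(\tilde{\sigma}_k^{\alpha})\big).
\]

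The final step is the algebraic normalization responsible for the exponent $d=\alpha+\frac{k-1}{k}$: for the scalar $\tilde{\sigma}_k$ and any $d>0$ one has $\nabla_l(\tilde{\sigma}_k^{\alpha})=\frac{\alpha}{d}\,\tilde{\sigma}_k^{\,\alpha-d}\,\nabla_l(\tilde{\sigma}_k^{d})$, and with this choice $\alpha-d=\frac{1-k}{k}$, so $\nabla_l(\tilde{\sigma}_k^{\alpha})=\frac{\alpha}{d}\,\tilde{\sigma}_k^{\frac{1-k}{k}}\,D_l(\tilde{\sigma}_k^{d})$, a covariant derivative of a scalar being an ordinary one. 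Substituting this inside the divergence and then writing $\nabla_m V^{m}=D_m V^{m}+\Gamma^{m}_{mp}V^{p}$ for the vector field $V^{m}=\frac{\alpha}{d}\,\tilde{\sigma}_k^{\frac{1-k}{k}}(\dot{\tilde{\sigma}}_k)^{ml}D_l(\tilde{\sigma}_k^{d})$ yields precisely the two terms $D_i\big(\frac{\alpha}{d}(\tilde{\sigma}_k)^{\frac{1-k}{k}}(\dot{\tilde{\sigma}}_k)^{ij}D_j(\tilde{\sigma}_k^{d})\big)$ and $\Gamma^{i}_{il}(\dot{\tilde{\sigma}}_k)^{lj}\frac{\alpha}{d}(\tilde{\sigma}_k)^{\frac{1-k}{k}}D_j(\tilde{\sigma}_k^{d})$ of \eqref{keyeq}; combining the second-order part with the reaction term and the $\partial_\tau(\rho^{k})$ contribution finishes the proof.

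The $\rho$-power bookkeeping is routine; the one structural ingredient — and the reason the identity can be put in divergence form at all — is the divergence-free property of $(\dot{\tilde{\sigma}}_k)^{ij}$ from Lemma \ref{lem2.1}(iv), available precisely because $\tilde{M}_t\subset\mathbb{R}^{n+1}$ carries the same Levi-Civita connection as $M_t$. I expect the only delicate point to be checking that every weight of $\rho$ cancels, so that the second-order coefficient comes out to be exactly $\frac{\alpha}{d}(\tilde{\sigma}_k)^{\frac{1-k}{k}}(\dot{\tilde{\sigma}}_k)^{ij}$ with no residual power of $\rho$ and no lower-order remainder beyond the two displayed gradient terms.
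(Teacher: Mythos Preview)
Your proposal is correct and follows essentially the same route as the paper: derive the rescaled evolution \eqref{sigmakevo3} from \eqref{sigmakevo} via the scaling relations, use the divergence-free identity $\nabla_i(\dot{\tilde\sigma}_k)^{ij}=0$ from Lemma~\ref{lem2.1}(iv) to put the principal part in divergence form, and then insert the algebraic substitution $D_j(\tilde\sigma_k^{\alpha})=\frac{\alpha}{d}\tilde\sigma_k^{(1-k)/k}D_j(\tilde\sigma_k^{d})$. The only cosmetic difference is ordering: the paper expands $\nabla_i\nabla_j$ in coordinates first and then invokes the divergence-free property inside that computation, whereas you pass to the covariant divergence first and convert $\nabla_mV^m=D_mV^m+\Gamma^m_{mp}V^p$ at the end; the two computations are equivalent.
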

\begin{proof}
By using  \eqref{sigmakevo}, \eqref{4.2}, \eqref{relationg} and \eqref{tau}, we obtain the following evolution equation for $\tilde{\sigma}_k=\sigma_k\cdot\rho^k$.
\begin{equation}\label{sigmakevo3}
\begin{aligned}
\frac{\partial}{\partial \tau}\tilde{\sigma}_k =(\dot{\tilde{\sigma}}_k)^{ij}\nabla_i\nabla_j(\tilde{\sigma}_k)^{\alpha}+(\tilde{\sigma}_k)^{\alpha}(\dot{\tilde{\sigma}}_k)^{ml}\tilde{h}_{mr}\tilde{h}_l^r-k\binom{n}{k}^\alpha \tilde{\sigma}_k.
\end{aligned}
\end{equation}
We can rewrite the first term on the right-hand side of \eqref{sigmakevo3} in local coordinates as follows.
\begin{equation}\label{divfree}
\begin{aligned}
(\dot{\tilde{\sigma}}_k)^{ij}\nabla_i\nabla_j(\tilde{\sigma}_k)^{\alpha}=&
(\dot{\tilde{\sigma}}_k)^{ij}(D_iD_j(\tilde{\sigma}_k)^{\alpha}-\Gamma^l_{ij}D_l(\tilde{\sigma}_k)^{\alpha})\\
=&D_i\big((\dot{\tilde{\sigma}}_k)^{ij}D_j(\tilde{\sigma}_k)^{\alpha}\big)-D_i\big((\dot{\tilde{\sigma}}_k)^{ij}\big)D_j(\tilde{\sigma}_k)^{\alpha}
-(\dot{\tilde{\sigma}}_k)^{ij}\Gamma^l_{ij}D_l(\tilde{\sigma}_k)^{\alpha}\\
=&D_i\big((\dot{\tilde{\sigma}}_k)^{ij}D_j(\tilde{\sigma}_k)^{\alpha}\big)-\Big(\nabla_i\big((\dot{\tilde{\sigma}}_k)^{ij}\big)-\Gamma^i_{il} (\dot{\tilde{\sigma}}_k)^{lj}- \Gamma^j_{il} (\dot{\tilde{\sigma}}_k)^{il}\Big)D_j(\tilde{\sigma}_k)^{\alpha}\\
&-(\dot{\tilde{\sigma}}_k)^{ij}\Gamma^l_{ij}D_l(\tilde{\sigma}_k)^{\alpha}\\
=&D_i\big((\dot{\tilde{\sigma}}_k)^{ij}D_j(\tilde{\sigma}_k)^{\alpha}\big)+\Gamma^i_{il} (\dot{\tilde{\sigma}}_k)^{lj}D_j(\tilde{\sigma}_k)^{\alpha},
\end{aligned}
\end{equation}
where we used Lemma \ref{lem2.1} (iv) in the last equality.
Let $d=\alpha+\frac{k-1}{k}$, then we have
\begin{equation}\label{dj}
D_j(\tilde{\sigma}_k)^{\alpha}=\alpha (\tilde{\sigma}_k)^{\alpha-1}D_j(\tilde{\sigma}_k)=\frac{\alpha}{d}(\tilde{\sigma}_k)^{\frac{1-k}{k}}D_j(\tilde{\sigma}_k)^d.
\end{equation}
We obtain \eqref{keyeq} immediately by combining \eqref{sigmakevo3}, \eqref{divfree} and \eqref{dj}.
\end{proof}

\begin{rem}\label{approrem}
By using the pinching estimate \eqref{3.14} and the relation \eqref{relationg}, we know that the ratios of the principal curvatures of the rescaled hypersurfaces are also bounded from below and above by uniform positive constants, so we obtain that there exists a positive constant $C$ which only depends on $M_0$, $k,\alpha $ and $n$ such that
\begin{equation}\label{appro}
C^{-1}\tilde{g}^{ij}\leq(\tilde{\sigma}_k)^{\frac{1-k}{k}}(\dot{\tilde{\sigma}}_k)^{ij}\leq C \tilde{g}^{ij}.
\end{equation}
We denote a double bound like  \eqref{appro} by
$(\tilde{\sigma}_k)^{\frac{1-k}{k}}(\dot{\tilde{\sigma}}_k)^{ij}\approx \tilde{g}^{ij}.$
\end{rem}

\begin{lem}\label{lemint}
For the rescaled flow, there exists a constant $C$ which only depends on $M_0$, $k,~\alpha,~n$  such that for any $\tau_2>\tau_1>0$, we have
\begin{equation}\label{int}
\int_{\tau_1}^{\tau_2}\int_{\tilde{M}}|\nabla (\tilde{\sigma}_k^d)|^2d\tilde{\mu}d\tau\leq C(1+\tau_2-\tau_1),
\end{equation}
where $d=\alpha+\frac{k-1}{k}$.
\end{lem}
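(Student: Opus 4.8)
The plan is to establish \eqref{int} by a standard parabolic energy (Caccioppoli-type) estimate: test the evolution equation \eqref{sigmakevo3} for $\tilde\sigma_k$ against the power $\tilde\sigma_k^{d}$ with $d=\alpha+\frac{k-1}{k}$, integrate over $\tilde M$, integrate the second-order term by parts, and finally integrate the resulting differential inequality in $\tau$ over $[\tau_1,\tau_2]$. The choice of exponent $d$ is precisely the one for which, after integration by parts and the substitution \eqref{dj} for $\nabla(\tilde\sigma_k^{\alpha})$, the elliptic contribution becomes $\frac{\alpha}{d}\int_{\tilde M}\tilde\sigma_k^{\frac{1-k}{k}}(\dot{\tilde\sigma}_k)^{ij}\nabla_i(\tilde\sigma_k^{d})\nabla_j(\tilde\sigma_k^{d})\,d\tilde\mu$, which by the two-sided bound \eqref{appro} dominates $\tfrac1C\int_{\tilde M}|\nabla(\tilde\sigma_k^{d})|^2\,d\tilde\mu$ from below.

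First I would record two elementary facts, both immediate from the bounds already in hand. Since the rescaled immersions evolve by $\partial_\tau\tilde X=-\tilde\sigma_k^{\alpha}\nu+\binom{n}{k}^{\alpha}\tilde X$, the rescaled area element satisfies $\partial_\tau\,d\tilde\mu=(n\binom{n}{k}^{\alpha}-\tilde\sigma_k^{\alpha}\tilde H)\,d\tilde\mu$, and $|n\binom{n}{k}^{\alpha}-\tilde\sigma_k^{\alpha}\tilde H|\le C(M_0,k,\alpha,n)$ because $\tilde\sigma_k$ and $\tilde H=\sum_i\tilde\lambda_i$ are bounded above by \eqref{uppersigmak0} and \eqref{upperlambdai} (and $\tilde H>0$ by strict convexity). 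Also, by Lemma \ref{lemrho} each $\tilde M_\tau$ is enclosed by a fixed ball, hence $|\tilde M_\tau|\le C$ uniformly in $\tau$; combined with \eqref{uppersigmak0} this yields $\int_{\tilde M}\tilde\sigma_k^{d+1}\,d\tilde\mu\le C$ for all $\tau$.

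Multiplying \eqref{sigmakevo3} by $\tilde\sigma_k^{d}$ and integrating over $\tilde M$, using $\tilde\sigma_k^{d}\partial_\tau\tilde\sigma_k=\frac1{d+1}\partial_\tau(\tilde\sigma_k^{d+1})$ together with the evolution of $d\tilde\mu$, integrating the first term on the right of \eqref{sigmakevo3} by parts (no boundary term since $\tilde M$ is closed, and $\nabla_i(\dot{\tilde\sigma}_k)^{ij}=0$ by Lemma \ref{lem2.1}(iv)), and invoking \eqref{dj}, one reaches an identity whose right-hand side is the sum of $\frac1{d+1}\int_{\tilde M}(n\binom{n}{k}^{\alpha}-\tilde\sigma_k^{\alpha}\tilde H)\tilde\sigma_k^{d+1}\,d\tilde\mu$, the manifestly nonpositive term $-k\binom{n}{k}^{\alpha}\int_{\tilde M}\tilde\sigma_k^{d+1}\,d\tilde\mu$, and $\int_{\tilde M}\tilde\sigma_k^{d+\alpha}(\dot{\tilde\sigma}_k)^{ml}\tilde h_{mr}\tilde h_l^r\,d\tilde\mu$. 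Discarding the nonpositive term, bounding the first summand by $C\int_{\tilde M}\tilde\sigma_k^{d+1}\,d\tilde\mu\le C$ and the third by $C\int_{\tilde M}\tilde\sigma_k^{d+\alpha+\frac{k-1}{k}}\,d\tilde\mu\le C$ (using \eqref{appro}, \eqref{uppersigmak0}, \eqref{upperlambdai} and $d+\alpha+\frac{k-1}{k}>0$), and applying \eqref{appro} once more on the left, one obtains
\[
\frac1{d+1}\frac{d}{d\tau}\int_{\tilde M}\tilde\sigma_k^{d+1}\,d\tilde\mu+\frac{\alpha}{dC}\int_{\tilde M}|\nabla(\tilde\sigma_k^{d})|^2\,d\tilde\mu\le C.
\]
Integrating over $[\tau_1,\tau_2]$ and discarding the nonnegative term $\frac1{d+1}\int_{\tilde M}\tilde\sigma_k^{d+1}\,d\tilde\mu\big|_{\tau_2}$, the contribution of the $\frac{d}{d\tau}$-term is at most $\frac1{d+1}\int_{\tilde M}\tilde\sigma_k^{d+1}\,d\tilde\mu\big|_{\tau_1}\le C$, so $\int_{\tau_1}^{\tau_2}\int_{\tilde M}|\nabla(\tilde\sigma_k^{d})|^2\,d\tilde\mu\,d\tau\le C(1+\tau_2-\tau_1)$, which is \eqref{int}.

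I do not anticipate a genuine obstacle; this is a routine energy estimate. The two points needing care are that the argument never requires a positive lower bound for $\tilde\sigma_k$ (none is available) — every degenerate nonlinearity is controlled only from above, via \eqref{appro}, \eqref{uppersigmak0}, \eqref{upperlambdai} and the pinching estimate \eqref{3.14} — and that one must keep track of the $\tau$-dependence of $d\tilde\mu$, which is where the bounds of Lemma \ref{lemrho} enter (both the enclosed-ball bound and, through \eqref{upperlambdai}, the boundedness of $\tilde H$).
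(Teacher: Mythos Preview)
Your proposal is correct and follows essentially the same approach as the paper's proof: both test the evolution equation \eqref{sigmakevo3} against $\tilde\sigma_k^{d}$, integrate by parts using the divergence-free property $\nabla_i(\dot{\tilde\sigma}_k)^{ij}=0$ from Lemma~\ref{lem2.1}(iv), use the identity \eqref{dj} together with the two-sided bound \eqref{appro} to extract $\int_{\tilde M}|\nabla(\tilde\sigma_k^{d})|^2\,d\tilde\mu$, and control the remaining terms via \eqref{uppersigmak0}, \eqref{upperlambdai}, and the area bound coming from Lemma~\ref{lemrho}. The only cosmetic difference is that the paper starts from the gradient integral and works backward to the $\tfrac{d}{d\tau}$-term, whereas you multiply the evolution equation and work forward; the resulting differential inequality and the final integration in $\tau$ are identical.
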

\begin{proof}
For $k=1$, this was proved by Schulze (see Lemma 3.3 in \cite{SCHULZE2006}). For general $k$, the proof is similar and the main idea is to use integration by parts.
Using \eqref{appro}, we have that
\begin{equation}\label{int1}
\begin{aligned}
\int_{\tilde{M}}|\nabla (\tilde{\sigma}_k^d)|^2d\tilde{\mu}&\leq C \int_{\tilde{M}}(\tilde{\sigma}_k)^{\frac{1-k}{k}}(\dot{\tilde{\sigma}}_k)^{ij}\nabla_i(\tilde{\sigma}_k^d)\nabla_j(\tilde{\sigma}_k^d)d\tilde{\mu}\\
&=C\int_{\tilde{M}}\frac{d}{\alpha}(\dot{\tilde{\sigma}}_k)^{ij}\nabla_i(\tilde{\sigma}_k^\alpha)\nabla_j(\tilde{\sigma}_k^d)d\tilde{\mu}\\
&=-C\cdot \frac{d}{\alpha}\int_{\tilde{M}}(\tilde{\sigma}_k^d)(\dot{\tilde{\sigma}}_k)^{ij}\nabla_i\nabla_j(\tilde{\sigma}_k^\alpha)d\tilde{\mu},
\end{aligned}
\end{equation}
where we used integration by parts and Lemma \ref{lem2.1} (iv)  in the last equality.
By using \eqref{sigmakevo3} and \eqref{int1}, we have
\begin{equation}\label{int2}
\begin{aligned}
\int_{\tilde{M}}|\nabla (\tilde{\sigma}_k^d)|^2d\tilde{\mu}&\leq
-C\cdot\frac{d}{\alpha(d+1)}\int_{\tilde{M}}\frac{\partial}{\partial \tau}(\tilde{\sigma}_k)^{d+1}d\tilde{\mu}\\
&+C\cdot\frac{d}{\alpha}\int_{\tilde{M}}(\tilde{\sigma}_k)^{d+\alpha}(\dot{\tilde{\sigma}}_k)^{ml}\tilde{h}_{mr}\tilde{h}_l^rd\tilde{\mu}
-C\cdot\frac{d}{\alpha}k\binom{n}{k}^\alpha\int_{\tilde{M}} (\tilde{\sigma}_k)^{d+1}d\tilde{\mu}.
\end{aligned}
\end{equation}
Since $\tilde{\sigma}_k$-curvature and the principal curvatures $\tilde{\lambda}_i$ of the rescaled hypersurfaces are uniformly bounded above for all $\tau>0$ (see \eqref{uppersigmak0},\eqref{upperlambdai}),
using \eqref{vol}, \eqref{4.2}, \eqref{tau} and \eqref{int2}, we obtain that there exist another constant $C'$  which does not depend on $\tau$ such that
\begin{equation*}
\begin{aligned}
\int_{\tilde{M}}|\nabla (\tilde{\sigma}_k^d)|^2d\tilde{\mu}\leq -C\cdot\frac{d}{\alpha(d+1)} \frac{d}{d\tau}\int_{\tilde{M}} (\tilde{\sigma}_k)^{d+1}d\tilde{\mu}+C',
\end{aligned}
\end{equation*}
from which we obtain the estimate \eqref{int} directly, since $\tilde{\sigma}_k$ is uniformly bounded from above and $\tilde{\mu}(\tilde{M})$ is bounded by $(n+1)\omega_{n+1}\tilde{\rho}_+^n$. Here $\omega_{n+1}$ is the volume of the unit $n$-ball.
\end{proof}

Armed with Lemma \ref{lemkeyeq} and Lemma \ref{lemint}, we can apply the interior H\"{o}lder estimates of  DiBenedetto and Friedman \cite{DF1985}, as proceeded by Schulze \cite{SCHULZE2006}, to obtain the following
H\"{o}lder estimate. The main idea is that the rescaled hypersurface can be locally written as a  graph with uniformly bounded $C^2$-norm. As the proof is similar to that of \cite[Lemma 3.4]{SCHULZE2006}, we omit the details here.
\begin{lem}\label{holder}
There exist universal constants $C>0,~\eta>0$ and $\beta\in(0,1)$ such that for every $(p,\tau)\in M\times[\eta,\infty)$, the $\beta$-H\"{o}lder norm in space-time of $\tilde{\sigma}_k$
on $B_{\eta}(p)\times(\tau-\eta,\tau+\eta)$ is bounded by $C$.
\end{lem}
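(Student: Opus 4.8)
The plan is to recognize the evolution equation \eqref{keyeq} for $\tilde\sigma_k$ as a scalar quasilinear parabolic equation in divergence form which is \emph{degenerate} precisely where $\tilde\sigma_k$ vanishes, and to obtain the H\"older estimate from the interior regularity theory of DiBenedetto and Friedman \cite{DF1985}. The point is that, since we control $\tilde\sigma_k$ only from above (by \eqref{uppersigmak0}) and not from below, the second-order coefficient $(\dot{\tilde\sigma}_k^{\alpha})^{ij}\approx\tilde\sigma_k^{\alpha-\frac1k}\tilde g^{ij}$ degenerates as $\tilde\sigma_k\to 0$ when $\alpha>\frac1k$; nonetheless, in the divergence form \eqref{keyeq} the natural ``energy variable'' is $\tilde\sigma_k^{d}$ with $d=\alpha+\frac{k-1}{k}$, and the conductivity matrix $\tilde\sigma_k^{\frac{1-k}{k}}(\dot{\tilde\sigma}_k)^{ij}$ is, by \eqref{appro}, uniformly comparable to $\tilde g^{ij}$. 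This is exactly the structure for which \cite{DF1985} provides an a priori modulus of continuity.

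First I would fix $(p,\tau_0)\in M\times[\eta,\infty)$ and, using the pinching estimate \eqref{3.14}, the curvature bound \eqref{upperlambdai} and the radius bounds of Lemma \ref{lemrho}, represent the rescaled hypersurface near $(p,\tau_0)$ as a graph over a fixed Euclidean ball on a fixed parabolic cylinder, with $C^2$-geometry (and hence with Christoffel symbols) bounded by a constant depending only on $M_0,k,\alpha,n$; this is possible because $\tilde\lambda_{\min}$ is bounded below and $\tilde\lambda_{\max}$ from above, so the graphical radius is uniform. In these coordinates \eqref{keyeq} reads
\begin{equation*}
\partial_\tau\tilde\sigma_k=D_i\bigl(\mathbf a^i(x,\tau,\tilde\sigma_k,D\tilde\sigma_k)\bigr)+b(x,\tau,\tilde\sigma_k,D\tilde\sigma_k),\qquad \mathbf a^i=\tfrac{\alpha}{d}\,\tilde\sigma_k^{\frac{1-k}{k}}(\dot{\tilde\sigma}_k)^{ij}D_j\bigl(\tilde\sigma_k^{d}\bigr).
\end{equation*}
By Remark \ref{approrem} the flux satisfies $\mathbf a^iD_i(\tilde\sigma_k^{d})\ge C^{-1}|D(\tilde\sigma_k^{d})|^2$ and $|\mathbf a^i|\le C\,|D(\tilde\sigma_k^{d})|$, i.e. it is uniformly elliptic as a function of $D(\tilde\sigma_k^{d})$; and the lower-order term $b$, which collects the Christoffel contribution of \eqref{keyeq} together with $\tilde\sigma_k^{\alpha}(\dot{\tilde\sigma}_k)^{ml}\tilde h_{mr}\tilde h_l^r-\tilde\sigma_k\binom{n}{k}^{\alpha}$, obeys $|b|\le C\bigl(|D(\tilde\sigma_k^{d})|+1\bigr)$ because $\tilde\sigma_k$ and all $\tilde\lambda_i$ are bounded and the graph has bounded geometry. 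Crucially, Lemma \ref{lemint} supplies the finite-energy hypothesis $\int_{\tau_1}^{\tau_2}\!\int_{\tilde M}|\nabla(\tilde\sigma_k^{d})|^2\,d\tilde\mu\,d\tau\le C(1+\tau_2-\tau_1)$ that is needed to run the iteration.

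With these structure conditions verified with constants uniform in $\tau$, I would invoke the interior H\"older estimate of \cite{DF1985} on a parabolic cylinder $B_\eta(p)\times(\tau_0-\eta,\tau_0+\eta)$ contained in the graphical patch (with $\eta>0$ universal), concluding that the space-time $\beta$-H\"older norm of $\tilde\sigma_k$ there is bounded by a constant $C$ depending only on $M_0,k,\alpha,n$ --- which is the assertion. Since this argument is parallel to \cite[Lemma 3.4]{SCHULZE2006}, where $k=1$, I would not reproduce the routine parts. I expect the main obstacle to be exactly the genuine degeneracy: because no parabolic Harnack inequality or positive lower bound for $\tilde\sigma_k$ is available, the classical (uniformly parabolic) De Giorgi--Nash--Moser or Krylov--Safonov estimates do not apply, so one must rely on the intrinsic-scaling machinery of \cite{DF1985}; the work therefore lies in checking that the reduced equation meets their structural hypotheses with $\tau$-independent constants, which in turn is what forces us to combine the pinching estimate \eqref{3.14}, the curvature bound \eqref{upperlambdai}, the energy bound \eqref{int}, and the uniform local graph representation.
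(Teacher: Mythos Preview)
Your approach is correct and is exactly the one the paper takes: write the rescaled hypersurface locally as a graph with uniformly bounded $C^2$-geometry, recognize \eqref{keyeq} as a degenerate divergence-form equation with conductivity $\tilde\sigma_k^{\frac{1-k}{k}}(\dot{\tilde\sigma}_k)^{ij}\approx\tilde g^{ij}$ by \eqref{appro}, feed in the energy bound \eqref{int}, and invoke DiBenedetto--Friedman \cite{DF1985} as in \cite[Lemma 3.4]{SCHULZE2006}. One small slip: you justify the uniform graphical patch by saying ``$\tilde\lambda_{\min}$ is bounded below,'' but no such lower bound is available at this stage (indeed, if it were, $\tilde\sigma_k$ would be bounded below and the equation would be uniformly parabolic, making the whole DiBenedetto--Friedman detour unnecessary); the uniform local graph representation follows instead from convexity, the diameter bound of Lemma \ref{lemrho}, and the upper bound \eqref{upperlambdai} on $\tilde\lambda_{\max}$ alone.
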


Next, by replacing the estimate (2.3) in Theorem 2.6 of \cite{SCHULZE2006} by  our  pinching estimate \eqref{3.1}, following the same steps as in \cite{SCHULZE2006}
(cf. \cite{A1994CVPDE}),  by  using
the upper bound on $\tilde{\rho}_+$ in Lemma \ref{lemrho},  Lemma \ref{holder}, the H\"{o}lder estimate by Andrews \cite[Theorem 6]{andrews2004fully} (or the H\"{o}lder estimate in the case of one space dimension in \cite{lieberman1996second}), the parabolic Schauder estimates \cite{lieberman1996second} and interpolation inequalities, we  conclude that the rescaled flow  hypersurfaces converge in $C^{\infty}$-topology to the unit sphere $\mathbb{S}^n$.

Finally, in order to prove that the convergence is exponentially fast, we use analogous argument to that in Theorem 3.5 of \cite{SCHULZE2006}, the only difference is that we need to consider the evolution of the rescaled quantity $\tilde{G}$ instead of the evolution of $\tilde{f}$ in \cite{SCHULZE2006}. More precisely, since $G$ (see \eqref{defG}) is a homogeneous of degree $2(k\alpha-1)$ function of the principal curvatures, using the definition of the rescaling \eqref{4.3}, we have $\tilde{G}=G\cdot\rho(t,T)^{2(k\alpha-1)}$,  using \eqref{4.2}, we can calculate directly  and obtain that
\begin{equation*}
\begin{aligned}
\frac{\partial}{\partial \tau} \tilde{G}=\frac{\partial G}{\partial t} \cdot\rho(t,T)^{3k\alpha-1}- 2(k\alpha-1)\binom{n}{k}^{\alpha} \tilde{G},
\end{aligned}
\end{equation*}
then we get from Theorem \ref{thm3.1} that
\begin{equation}\label{tildeg}
\frac d{d\tau}\max_{\tilde{M}_{\tau}}\tilde{G} \leq - 2(k\alpha-1)\binom{n}{k}^{\alpha} \max_{\tilde{M}_{\tau}}\tilde{G}.
\end{equation}
By applying the maximum principle and using the fact that the rescaled hypersurfaces converge in $C^{\infty}$-topology to the unit sphere $\mathbb{S}^n$, \eqref{tildeg} implies that
there exists a positive constant $\delta_0$ such that
\begin{equation}\label{tildelambda}
|\tilde{\lambda}_{max}-\tilde{\lambda}_{min}|(p,\tau) \le C e^{-\delta_0 \tau} ,\quad\quad \forall ~(p,\tau) \in M \times [0,\infty).
\end{equation}
After obtaining \eqref{tildelambda}, we obtain the exponential convergence of the rescaled hypersurfaces by standard arguments as done in \cite{A1994CVPDE} and \cite{SCHULZE2006}.

\subsection{Convergence of the rescaled hypersurfaces for the flow \eqref{flow} with $\epsilon=1$}\label{sec4.3}
First, we show that as long as the unrescaled hypersurfaces $M_t,~t\in[0,T']$ enclose a fixed ball $B_{r}(y_0)$ for some $y_0\in\mathbb{S}^{n+1}$ and $r>0$, the $\sigma_k$-curvature of $M_t$ has a positive upper bound depending on $r$.  This is needed in  \S\ref{sec4.1}.
\begin{lem}\label{upper1}
 Let $X:M\times [0,T)$ a smooth strictly convex solution of \eqref{flow} for $\epsilon=1$.   If all the unrescaled hypersurfaces $M_t$ on a time interval $[0,T']~(T'<T)$ enclose a fixed ball $B_{r}(y_0)$ for some $y_0\in\mathbb{S}^{n+1}$ and $r<\pi/4$, then we have
\begin{equation*}
\sigma_k (p,t) \leq C(M_0,r,k,\alpha,n),~\textrm{for~all}~(p,t)\in M\times[0,T'].
\end{equation*}
\end{lem}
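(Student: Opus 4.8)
The plan is to adapt the proof of Lemma~\ref{upper0} (Tso's technique) to the sphere, using the intrinsic support function of $M_t$ relative to $y_0$ in place of the Euclidean quantity $\langle X-y_0,\nu\rangle$. Since $M_t$ is strictly convex and $y_0$ lies in the convex body $\hat{M}_t$, the hypersurface $M_t$ is star-shaped with respect to $y_0$, hence a graph over $\mathbb{S}^n$ in geodesic polar coordinates centred at $y_0$; let $\chi=\bar{g}(\sin u\,\partial_r,\nu)$ be its support function as in \S\ref{sec2.2}. A direct computation in the standard embedding $\mathbb{S}^{n+1}\subset\mathbb{R}^{n+2}$ shows $\chi=-\langle y_0,\nu\rangle$, and the fact that $M_t$ is convex and encloses $B_r(y_0)$ then forces $\sin r\le\chi\le 1$ on $M_t$. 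The hypothesis $r<\pi/4$ is used to keep the flow inside this coordinate patch and to make the geometric quantities below positive and bounded. We set $c=\frac{\sin r}{2}$ and $\psi=\dfrac{\sigma_k^\alpha}{\chi-c}$; then $\psi$ is well defined on $M\times[0,T']$ and $\frac{\sin r}{2}\,\psi\le\sigma_k^\alpha\le\psi$.

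Next we would compute the evolution of $\psi$. By the same computation as in Lemma~\ref{upper0}, now using the evolution equation \eqref{s2:evl-chi} for $\chi$ and keeping the $\epsilon=1$ term in \eqref{2.5}, together with the quotient rule for $\mathcal{L}=\dot{(\sigma_k^{\alpha})}^{ij}\nabla_i\nabla_j$, one obtains at every point
\begin{equation*}
\frac{\partial}{\partial t}\psi-\mathcal{L}\psi=\frac{2}{\chi-c}\dot{(\sigma_k^{\alpha})}^{ij}\nabla_i\psi\,\nabla_j\chi-\frac{c}{\chi-c}\,\psi\,\dot{(\sigma_k^{\alpha})}^{ml}h_{mr}h_l^r+\psi\,\dot{(\sigma_k^{\alpha})}^{ij}g_{ij}+(k\alpha+1)\psi^2\cos u .
\end{equation*}
The first term vanishes at a spatial maximum of $\psi$, the second is a ``good'' negative term, the third is the extra zero-order term produced by the curvature of $\mathbb{S}^{n+1}$, and the last is bounded by $(k\alpha+1)\psi^2$ since $\cos u\le1$.

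At a point where $\psi(\cdot,t)$ attains its spatial maximum we have $\mathcal{L}\psi\le0$ and the gradient term drops out. For the good term, Lemma~\ref{lem2.1}(iii) gives $\dot{(\sigma_k^{\alpha})}^{ml}h_{mr}h_l^r\ge\frac{k\alpha}{\binom{n}{k}^{1/k}}(\sigma_k^{\alpha})^{1+\frac{1}{k\alpha}}$, which with $\sigma_k^\alpha\ge\frac{\sin r}{2}\psi$ and $\chi-c\le1$ makes this term at most $-a_1\psi^{2+\frac{1}{k\alpha}}$ for some $a_1=a_1(r,k,\alpha,n)>0$. For the extra term we invoke the pinching estimate \eqref{3.14}: since $M_t$ is axially symmetric with the ratio of its principal curvatures bounded above and below by a constant depending only on $M_0$, all principal curvatures of $M_t$ are mutually comparable, so $\sigma_{k-1}\le C(M_0,n,k)\,\sigma_k^{\frac{k-1}{k}}$ and hence $\dot{(\sigma_k^{\alpha})}^{ij}g_{ij}=\alpha(n-k+1)\sigma_k^{\alpha-1}\sigma_{k-1}\le C(M_0,n,k)\,(\sigma_k^\alpha)^{1-\frac{1}{k\alpha}}\le C\,\psi^{1-\frac{1}{k\alpha}}$, using $\sigma_k^\alpha\le\psi$ and $\alpha\ge 1/k$. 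Therefore
\begin{equation*}
\frac{d}{dt}\max_{M_t}\psi\le-a_1\big(\max_{M_t}\psi\big)^{2+\frac{1}{k\alpha}}+a_2\big(\max_{M_t}\psi\big)^{2-\frac{1}{k\alpha}}+(k\alpha+1)\big(\max_{M_t}\psi\big)^2 ,
\end{equation*}
and since the leading power on the right is the first, the right-hand side is negative as soon as $\max_{M_t}\psi$ exceeds some $\Psi_0=\Psi_0(M_0,r,k,\alpha,n)$. By the maximum principle $\max_{M_t}\psi\le\max\{\max_{M_0}\psi,\Psi_0\}$ for all $t\in[0,T']$, and consequently $\sigma_k=\big(\psi(\chi-c)\big)^{1/\alpha}\le\psi^{1/\alpha}\le C(M_0,r,k,\alpha,n)$, as claimed.

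The step we expect to be the main obstacle is the control of the zero-order term $\dot{(\sigma_k^{\alpha})}^{ij}g_{ij}$, which has no analogue in the Euclidean Lemma~\ref{upper0}: for a general strictly convex hypersurface it is \emph{not} dominated by the good term $\dot{(\sigma_k^{\alpha})}^{ml}h_{mr}h_l^r$ (for example when one principal curvature is small and the remaining ones large, so that $\sigma_{k-1}/\sigma_k$ is large even though $\sigma_k$ is large), and it is precisely here that the axial symmetry and the pinching estimate \eqref{3.14} of \S\ref{sec3} are essential. The remaining ingredients — the support function bounds $\sin r\le\chi\le1$, the bound $\cos u\le1$, and the validity of the polar-coordinate description, where the hypothesis $r<\pi/4$ is used — are routine.
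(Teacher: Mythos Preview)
Your proof is correct and follows the same Tso-type argument as the paper: define $\psi=\sigma_k^\alpha/(\chi-\tfrac{\sin r}{2})$ with $\chi=\bar g(\sin u\,\partial_r,\nu)$, compute its evolution from \eqref{2.5} and \eqref{s2:evl-chi}, and let the good term $-c\,\psi\,\dot{(\sigma_k^\alpha)}^{ml}h_{mr}h_l^r/(\chi-c)$ absorb everything via Lemma~\ref{lem2.1}(iii). The only difference is in how you control the ambient-curvature term $\psi\,\dot{(\sigma_k^\alpha)}^{ij}g_{ij}$: the paper uses the uniform lower bound on the principal curvatures (obtained in \S\ref{sec3.1} from Lemma~\ref{lem3.1} together with \eqref{3.14}) and the Euler relation to get $\dot{(\sigma_k^\alpha)}^{ij}g_{ij}\le C\sigma_k^\alpha$, hence a contribution $\le C\psi^2$; you instead use the pinching ratio \eqref{3.14} to bound $\sigma_{k-1}\le C\sigma_k^{(k-1)/k}$ and obtain a contribution $\le C\psi^{2-\frac{1}{k\alpha}}$. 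Both estimates draw on the same pinching information from \S\ref{sec3}, and either is dominated by $-a_1\psi^{2+\frac{1}{k\alpha}}$, so the maximum principle closes in the same way.
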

\begin{proof}
When $\alpha=1/k$, the conclusion is contained in the results  by Gerhardt \cite{Gerhardt2015}.
We consider the case $\alpha>1/k$. As all the unrescaled hypersurfaces $M_t$ ($t\in[0,T']$) enclose the ball $B_{r}(y_0)$, we can write $M_t~(t\in[0,T'])$ as
a graph in a geodesic polar coordinate system with center $y_0$:
\begin{equation*}
M_t=\text{graph}~ u(\cdot,t),~\forall~t\in[0,T'],
\end{equation*}
with $u(\cdot,t)\geq r$. As $u$ is decreasing with respect to $t$, we may assume that $u(\cdot,t)<\pi/2$ for $t\in[t_0,T']$.
Recall that the support function of  $M_t$ is defined by $\chi=\bar{g}(\sin{u}~\partial_r,\nu)$ (see \S\ref{sec2.2}).
Assume that $\chi$ attains a minimum at some point $p_t\in M_t$, as $M_t$ is strictly convex, $p_t$ is a critical point of $u$ (cf. \cite[Lemma 7.1]{Gerhardt2015}), we obtain that
$$\chi(p,t)\geq \chi(p_t,t)=\sin{u(p_t,t)}\geq \sin{r},~\forall~t\in[t_0,T'].$$
We define $\psi=\frac{\sigma_k^\alpha}{\chi-\sin{r}/2}$, then  $\psi$ is
  well-defined on $[t_0,T']$.
  By using \eqref{s2:evl-chi} and \eqref{2.5}, after a direct calculation, we have (cf. \cite{Gerhardt2015})
\begin{equation*}
\begin{aligned}
\frac{\partial}{\partial t}\psi -\mathcal{L}\psi=&\frac{2}{\chi-\sin{r}/2} \dot{(\sigma_k^{\alpha})}^{ij}\nabla_i\psi\nabla_j\chi\\
&+(k\alpha+1)\cos{u}\cdot \psi^2-\frac{\sin{r}}{2}\psi^2\frac{\dot{(\sigma_k^{\alpha})}^{ml}h_{mr}h_l^r}{\sigma_k^{\alpha}}+\epsilon \psi \dot{(\sigma_k^{\alpha})}^{ij}g_{ij}.
\end{aligned}
\end{equation*}
By Euler relation \eqref{2.2}, we have
$\sum_i \dot{(\sigma_k^{\alpha})}^{i}\lambda_i=k\alpha \sigma_k^{\alpha}$,
since we have uniform lower bounds for the principal curvatures,  then we obtain that
$\dot{(\sigma_k^{\alpha})}^{ij}g_{ij}=\sum_i \dot{(\sigma_k^{\alpha})}^{i}\leq C\sigma_k^{\alpha}$, where
$C$ is a positive constant which only depends on $M_0$, $k$, $\alpha$ and $n$.
We also have that
$\frac{\sin{r}}{2}\psi\leq \sigma_k^{\alpha}=\psi(\chi-\frac{\sin{r}}{2})\leq\psi $ on $[t_0,T']$, $\cos{u}\leq 1$, by using Lemma \ref{lem2.1} (iii), we obtain that
\begin{equation*}
\begin{aligned}
\frac{\partial}{\partial t}\psi -\mathcal{L}\psi\leq &\frac{2}{\chi-\sin{r}/2} \dot{(\sigma_k^{\alpha})}^{ij}\nabla_i\psi\nabla_j\chi\\
&+(b_1-b_2(\sin{r})^{1+\frac{1}{k\alpha}}\psi^{1/(k\alpha)})\psi^2,
\end{aligned}
\end{equation*}
where $b_1,~b_2$ are two constants which only depend on $M_0,~k,~\alpha$ and $n$.
Then by using the maximum principle, we obtain that
\begin{equation}\label{upppsi1}
\psi\leq \max\{\max_{M_0}\psi+1, b_3 (\sin{r})^{-(k\alpha+1)}\},
\end{equation}
where $b_3$ is another constant which only depends on $M_0,~k,~\alpha$ and $n$.
\eqref{upppsi1} gives an upper bound for $\sigma_k=(\psi(\chi-\sin{r}/2))^{1/\alpha}$ on $[t_0,T']$.
On the other hand, we also have an upper bound for $\sigma_k$ on the finite time interval $[0,t_0]$, so we obtain that
\begin{equation*}
\sigma_k \leq C(M_0,r,k,\alpha,n).
\end{equation*}
\end{proof}

Similar to the Euclidean case, if the initial hypersurface is a geodesic sphere, then the flow hypersurfaces of the flow \eqref{flow}
are all spheres with the same center and their radii $\Theta(t)$ satisfy the following  ODE
		\begin{equation}\label{Theta}
	 \frac{d}{dt}\Theta(t)=~-\binom{n}{k}^{\alpha}(\tan{\Theta(t)})^{-k\alpha}.
	\end{equation}
$\Theta(t)$ tends to $0$ in finite time, we denote by $\Theta(t,T)$ the radii of the sphere solution which shrinks to a point as $t\to T$, where $T$ is the maximal existence time of the flow \eqref{flow} with initial hypersurface $M_0$ for $\epsilon=1$. By using maximum principle, for any fixed time $t\in[0,T)$, the sphere with center $y_0\in\hat{M}_t$ (the convex body of $M_t$) and radius $\Theta(t,T)$ intersects $M_t$, so if we write $M_t$ as a graph in polar coordinates with center $y_0\in\hat{M}_t$, then we have the following relation among the graph function, the inner radius, the outer radius and $\Theta(t,T)$:
\begin{equation}\label{multirelation}
\inf_{M_t} u\leq \rho_-(t)\leq \Theta(t,T)\leq\rho_+(t)\leq \sup_{M_t} u\leq 2\rho_+(t).
\end{equation}
 If there is no confusion, we will denote $\Theta(t,T)$ by $\Theta$ for short in the sequel.
We note that when $t\in[t_\delta=T-\delta,T)$, for any $y_0\in\hat{M}_t$, we have
\begin{equation*}
\hat{M}_t\subset B_{2\rho_+(t)}(y_0) \subset B_{2C_4\rho_-(t)}(y_0),
\end{equation*}
 where $C_4$ is the constant in \eqref{pinchingrho}. We can choose $\delta$ small enough (without changing the notation) such that $$2C_4\rho_-(t_{\delta})\leq 2C_4\Theta(t_\delta,T)<1.$$

We define a new time parameter by
\begin{equation}\label{tau1}
\tau=-\log{\Theta(t,T)}.
\end{equation}
Then we have
\begin{equation}\label{dtau}
\frac{d\tau}{dt}=-\frac{1}{\Theta(t,T)}\frac{d}{dt}\Theta(t,T)=(\tan{\Theta})^{-k\alpha}\binom{n}{k}^\alpha\Theta^{-1}.
\end{equation}

Now, we apply Lemma \ref{upper1} to show that the rescaled $\sigma_k$-curvature $\tilde{\sigma}_k=\sigma_k\Theta^k$ and the rescaled principal curvatures
$\tilde{\lambda}_i=\lambda_i\Theta$ are bounded from above by uniform constants.
For any fixed $t_0\in(t_\delta,T)$,  let $B_{\rho_-(t_0)}(y_0)$ be an inner ball of $\hat{M}_{t_0}$. We write the flow hypersurface $M_t~(t\in[t_\delta,t_0])$ as
a graph in a geodesic polar coordinate system with center $y_0$:
\begin{equation*}
M_t=\text{graph}~ u(\cdot,t),~\forall~t\in[t_\delta,t_0].
\end{equation*}
Then the graph function satisfies that
\begin{equation*}
\rho_-(t_0)\leq u(t_0)\leq u(t)\leq 1.
\end{equation*}
Assume that $t_1>t_\delta$ is a fixed time which  satisfies that $b_3 (\sin{\Theta(t_1,T)})^{-(k\alpha+1)}> \max_{M_0}\psi+1$.
When
 $t_0\geq t_1$,
 since $\rho_-(t_0)\leq \Theta(t_0,T)\leq \Theta(t_1,T)$, from \eqref{upppsi1} we obtain that
 \begin{equation}
\psi(t_0)\leq b_3 (\sin{\rho_-(t_0)})^{-(k\alpha+1)},
\end{equation}
which implies that
 \begin{equation}
 \begin{aligned}
\sigma_k^\alpha(t_0)&=\psi(t_0)(\chi-\sin{\rho_-(t_0)}/2)\\
&\leq 2\rho_+(t_0)b_3 (\sin{\rho_-(t_0)})^{-(k\alpha+1)},
\end{aligned}
\end{equation}
note that $\sin{\rho_-(t_0)}\geq \frac{2}{\pi}\rho_-(t_0)$,
then by using \eqref{pinchingrho} and \eqref{multirelation}, we obtain that
 \begin{equation}
 \begin{aligned}
\tilde{\sigma}_k(t_0)&=\sigma_k(t_0)\Theta(t_0,T)^{k}\leq C(C_4,k,\alpha,n).
\end{aligned}
\end{equation}
When $t_0\leq t_1$, since $[t_\delta,t_1]$ is a fixed finite time interval, we immediately get an upper bound $C'$ for
$\tilde{\sigma}_k(t_0)$ on $[t_\delta,t_1]$, and $C'$ only depends on  $M_0$ and $k,~\alpha$ and $n$.
Therefore, we obtain an upper bound for $\tilde{\sigma}_k(t)$ for all $t\in[t_\delta,T)$. Equivalently, we obtain that
there exists a constant $C$ which only depends on $M_0$, $k,~\alpha$ and $n$ such that
\begin{equation}\label{uppersigmak1}
\tilde{\sigma}_k(\tau)\leq C,~\forall~\tau\in [\tau_\delta,\infty),~\text{with}~\tau_\delta=-\log{\Theta(t_\delta,T)}.
\end{equation}
\eqref{uppersigmak1} in combination with the pinching estimate \eqref{3.14} gives a uniform bound for the  rescaled principal  curvatures:
\begin{equation}\label{upperlambda}
\tilde{\lambda}_{\text{max}}(\tau)\leq C,~\forall~\tau\in[\tau_\delta,\infty),
\end{equation}
where $C$ differs from the one in \eqref{uppersigmak1} up to a universal constant.

Let $t_1\in[t_\delta,T)$ be arbitrary and let $t_2>t_1$ be the time which satisfies
\begin{equation*}
\Theta(t_2,T)=\frac{1}{2}\Theta(t_1,T),
\end{equation*}
then $\tau_i=-\log{\Theta(t_i,T)}~(i=1,2)$ satisfy $\tau_2=\tau_1+\log{2}$.
Let $y_0$ be the center of an inner ball of $\hat{M}_{t_2}$, we introduce polar coordinates with center $y_0$ and write $M_t$ as graph of $u(\theta,t)$ for $t\in[t_1,t_2]$,
by using a similar argument to that in \cite[\S 7]{Gerhardt2015},
we have the following Lemma.
\begin{lem}[cf. \cite{Gerhardt2015}]\label{lem4.10}
\begin{itemize}
\item[(i)]There exists a constant $c$ which only depends on $M_0,~k,~\alpha$ and $n$ such that for all $(\theta,t)\in\mathbb{S}^n\times[t_1,t_2]$, we have
\begin{equation*}
c^{-1}\Theta(t_2,T)\leq u(\theta,t)\leq c~\Theta(t_2,T),~u_{max}(t)\leq c^2u_{min}(t).
\end{equation*}
\item[(ii)] $ v^2=1+\sin{u}^{-2}|Du|^2_{g_{\mathbb{S}^n}}$ is uniformly bounded in $\mathbb{S}^n\times [t_1,t_2]$.
\item[(iii)] $\Gamma_{ij}^k-\bar{\Gamma}_{ij}^k$ is uniformly bounded, where $\Gamma_{ij}^k$ (resp. $\bar{\Gamma}_{ij}^k$) are the Christoffel symbols of  the metric $g_{ij}$ of $M_t$ (resp. the standard sphere metric $\sigma_{ij}$).
\end{itemize}
\end{lem}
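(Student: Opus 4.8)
The plan is to prove the three estimates in turn, using only the radius pinching \eqref{pinchingrho}, the chain \eqref{multirelation}, the curvature bound \eqref{upperlambda}, the nestedness $\hat M_{t_2}\subset\hat M_t\subset\hat M_{t_1}$ for $t\in[t_1,t_2]$, and elementary convexity. Since $\Theta(t,T)$ is decreasing and $\Theta(t_2,T)=\frac{1}{2}\Theta(t_1,T)$, we have $\Theta(t_2,T)\le\Theta(t,T)\le2\Theta(t_2,T)$ on $[t_1,t_2]$, and combining this with \eqref{pinchingrho} and \eqref{multirelation} shows that $\rho_-(t)$, $\rho_+(t)$ and $\Theta(t,T)$ are all comparable to $\Theta(t_2,T)$ there, with constants depending only on $C_4$ (hence only on $M_0,k,\alpha,n$, and in particular independent of the choice of $t_1$, which is what makes the bounds uniform in $\tau$). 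Recall also that $\delta$ was fixed so small that $\Theta(t,T)$ is tiny on $[t_\delta,T)$, so $\sin u\approx u$ throughout.

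For (i): because $y_0$ is the centre of an inball of $\hat M_{t_2}$, the fixed ball $B_{\rho_-(t_2)}(y_0)$ lies in $\hat M_{t_2}\subset\hat M_t$ for every $t\in[t_1,t_2]$; hence each such $\hat M_t$ is star-shaped about $y_0$ and $u(\theta,t)\ge\rho_-(t_2)\ge c^{-1}\Theta(t_2,T)$. For the upper bound, $\hat M_t\subset\hat M_{t_1}$ lies in a geodesic ball of radius $\rho_+(t_1)$ whose closure also contains $y_0$, so by the triangle inequality $u(\theta,t)\le2\rho_+(t_1)\le c\,\Theta(t_2,T)$. The estimate $u_{\max}(t)\le c^2u_{\min}(t)$ follows at once.

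For (ii): from the warped-product formulas of \S\ref{sec2.2} the support function is $\chi=\bar g(\sin u\,\partial_r,\nu)=\sin u/v$, so it suffices to bound $\chi$ below by a fixed multiple of $\Theta(t_2,T)$. This is where convexity enters: since $\hat M_t$ contains $B_{\rho_-(t_2)}(y_0)$, the totally geodesic hyperplane tangent to $M_t$ at any point does not meet that ball, so its geodesic distance to $y_0$ is at least $\rho_-(t_2)$; this forces $\chi\ge c\sin\rho_-(t_2)\ge c\,\Theta(t_2,T)$, and together with $\sin u\le c\,\Theta(t_2,T)$ from (i) we obtain $v=\sin u/\chi\le C$. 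I expect this step, rather than the others, to be the crux: it is exactly here that one needs the moving region to stay nested around a single fixed ball, not merely the pointwise pinching \eqref{3.14}.

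For (iii): write the difference of connections as the tensor $\Gamma_{ij}^k-\bar\Gamma_{ij}^k=\frac{1}{2}g^{kl}(\bar\nabla_i g_{jl}+\bar\nabla_j g_{il}-\bar\nabla_l g_{ij})$, where $\bar\nabla$ is the Levi-Civita connection of $\sigma_{ij}$. From $g_{ij}=u_iu_j+\sin^2u\,\sigma_{ij}$ and (i)--(ii) one gets $g_{ij}\approx\Theta(t_2,T)^2\sigma_{ij}$, hence $|g^{kl}|\le C\,\Theta(t_2,T)^{-2}$, and $|u_i|\le C\,\Theta(t_2,T)$ since $|Du|^2_{g_{\mathbb{S}^n}}=\sin^2u\,(v^2-1)$ by \eqref{s2:v-def}. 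It remains to bound the $\sigma$-Hessian $\bar\nabla_i\bar\nabla_j u$ by $C\,\Theta(t_2,T)$: solving \eqref{hij} for $\varphi_{jk}=\bar\nabla_j\bar\nabla_k\varphi(u)$ — the matrix $\sigma^{ik}-v^{-2}\varphi^i\varphi^k$ has inverse $\sigma_{jk}+\varphi_j\varphi_k$, which is bounded because $\varphi_i=\sin^{-1}u\,u_i$ is bounded — and then inserting the curvature bound \eqref{upperlambda} shows $|\varphi_{jk}|\le C$, whence $|\bar\nabla_i\bar\nabla_j u|\le C\,\Theta(t_2,T)$. Consequently each $\bar\nabla g$ term is $O(\Theta(t_2,T)^2)$, so $|\Gamma_{ij}^k-\bar\Gamma_{ij}^k|\le C$ uniformly in $\tau$. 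This is the adaptation to the rescaled setting of the argument in \cite[\S 7]{Gerhardt2015}.
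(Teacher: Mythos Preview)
Your argument is correct and is precisely the detailed version of the proof the paper omits: the paper itself gives no proof of Lemma~\ref{lem4.10} beyond the sentence ``by using a similar argument to that in \cite[\S 7]{Gerhardt2015}'', and what you have written is exactly that argument spelled out --- the $C^0$ bound from nestedness and the radius pinching \eqref{pinchingrho}--\eqref{multirelation}, the $v$-bound via a lower bound on the support function $\chi$ coming from convexity and the enclosed inball (compare the paper's own use of $\chi\ge\sin r$ in the proof of Lemma~\ref{upper1}), and the Christoffel bound by inverting the matrix in \eqref{hij} with the curvature bound \eqref{upperlambda}. There is nothing to add.
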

The following Lemma tells us that we can write the evolution equation of $\tilde{\sigma}_k$ in a special form similar to Euclidean case.
\begin{lem}\label{lemkeyeq1}
\begin{equation}\label{keyeq1}
\begin{aligned}
\frac{\partial}{\partial \tau}\tilde{\sigma}_k &= \binom{n}{k}^{-\alpha}\Theta^{-k\alpha}(\tan{\Theta})^{k\alpha}\cdot \bar{\nabla} _i\big(\frac{\alpha}{d}\Theta^2(\sigma_k)^{\frac{1-k}{k}}\dot{\sigma_k}^{ij} \bar{\nabla} _j\big((\tilde{\sigma}_k)^d)\big)\\
&+\binom{n}{k}^{-\alpha}\Theta^{-k\alpha}(\tan{\Theta})^{k\alpha}\cdot(\Gamma^i_{il}-\tilde{\Gamma}^i_{il})\Theta^{k+1}\dot{\sigma_k}^{lj} \cdot\frac{\alpha}{d}(\tilde{\sigma}_k)^{\frac{1-k}{k}}\bar{\nabla} _j
(\tilde{\sigma}_k)^d\\
&+\binom{n}{k}^{-\alpha}\Theta^{-k\alpha}(\tan{\Theta})^{k\alpha}(\tilde{\sigma}_k)^\alpha\Big(
\dot{\sigma_k}^{ml}h_{mr}h_l^r\Theta^{k+1}+\epsilon\dot{\sigma_k}^{ij}h_{ij}\Theta^{k+1}\Big)
-k \tilde{\sigma}_k,
\end{aligned}
\end{equation}
where $d=\alpha+\frac{k-1}{k}$ and $\bar{\nabla}$ denotes the Levi-Civita connection on the unit sphere $\mathbb{S}^n$.
\end{lem}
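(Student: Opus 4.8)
The claimed identity \eqref{keyeq1} is the spherical analogue of Lemma \ref{lemkeyeq}, and I would establish it by the same bookkeeping, starting from the evolution equation \eqref{sigmakevo} of $\sigma_k$ under \eqref{flow} in the case $\epsilon=1$. First combine the two leading terms of \eqref{sigmakevo} into $\dot\sigma_k^{ml}\nabla_m\nabla_l\sigma_k^{\alpha}$ (using $\dot{(\sigma_k^{\alpha})}^{ml}=\alpha\sigma_k^{\alpha-1}\dot\sigma_k^{ml}$ and the product rule for $\nabla_m\nabla_l\sigma_k^{\alpha}$), so that
\[
\frac{\partial}{\partial t}\sigma_k=\dot\sigma_k^{ml}\nabla_m\nabla_l\sigma_k^{\alpha}+\sigma_k^{\alpha}\dot\sigma_k^{ml}h_{mr}h_l^r+\epsilon\,\sigma_k^{\alpha}\dot\sigma_k^{ij}g_{ij}.
\]
Then pass to the rescaled quantity $\tilde\sigma_k=\sigma_k\Theta^{k}$ and the time parameter $\tau=-\log\Theta$ of \eqref{tau1}. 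Since $\Theta=\Theta(t,T)$ depends on $t$ only, the chain rule and \eqref{dtau} give $\tfrac{dt}{d\tau}=\binom{n}{k}^{-\alpha}\Theta(\tan\Theta)^{k\alpha}$; together with the ODE \eqref{Theta} for $\Theta$, the contribution of $\partial_t(\Theta^k)$ to $\partial_\tau\tilde\sigma_k$ collapses to exactly $-k\tilde\sigma_k$, the last term of \eqref{keyeq1}, while every remaining term equals $\binom{n}{k}^{-\alpha}\Theta^{k+1}(\tan\Theta)^{k\alpha}$ times the corresponding term of $\partial_t\sigma_k$ above.

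Next I would rewrite the second-order term in divergence form with respect to the round-sphere connection $\bar\nabla$ on $\mathbb{S}^n$. Because $\Theta$ is constant in space, $\nabla_i\nabla_j\sigma_k^{\alpha}=\Theta^{-k\alpha}\nabla_i\nabla_j(\tilde\sigma_k)^{\alpha}$, and then, exactly as in \eqref{divfree}, the divergence-free property $\nabla_i(\dot\sigma_k^{ij})=0$ of Lemma \ref{lem2.1}(iv) together with the change-of-connection formula relating $\nabla$ (the Levi--Civita connection of $g_{ij}$ on $M_t$) to the plain coordinate derivative gives $\dot\sigma_k^{ij}\nabla_i\nabla_j f=D_i(\dot\sigma_k^{ij}D_j f)+\Gamma^i_{il}\dot\sigma_k^{lj}D_j f$ for every scalar $f$, the two would-be surplus Christoffel contractions cancelling by the symmetry of $\dot\sigma_k^{ij}$. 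Replacing $D_i$ by $\bar\nabla_i$ (a scalar has $D_jf=\bar\nabla_jf$, and a vector field $V^i$ has $D_iV^i=\bar\nabla_iV^i-\tilde\Gamma^i_{ip}V^p$) turns this into
\[
\dot\sigma_k^{ij}\nabla_i\nabla_j f=\bar\nabla_i\big(\dot\sigma_k^{ij}\bar\nabla_j f\big)+(\Gamma^i_{il}-\tilde\Gamma^i_{il})\,\dot\sigma_k^{lj}\bar\nabla_j f,
\]
where $\tilde\Gamma^i_{il}$ is the Christoffel symbol of the standard metric $\sigma_{ij}$; one then applies this with $f=(\tilde\sigma_k)^{\alpha}$.

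Finally I would absorb the powers of $\Theta$. With $d=\alpha+\tfrac{k-1}{k}$ the identity $\bar\nabla_j(\tilde\sigma_k)^{\alpha}=\tfrac{\alpha}{d}(\tilde\sigma_k)^{\frac{1-k}{k}}\bar\nabla_j(\tilde\sigma_k)^{d}$ of \eqref{dj} holds verbatim; pulling the spatially constant factor $\Theta^{k+1}$ inside $\bar\nabla_i(\cdot)$ and using $\Theta^{k+1}(\tilde\sigma_k)^{\frac{1-k}{k}}=\Theta^{2}\sigma_k^{\frac{1-k}{k}}$ (as $\tilde\sigma_k=\sigma_k\Theta^{k}$) produces the divergence term of \eqref{keyeq1}, and likewise for the first-order Christoffel term. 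For the zeroth-order part, substituting $\sigma_k^{\alpha}=(\tilde\sigma_k)^{\alpha}\Theta^{-k\alpha}$ into $\sigma_k^{\alpha}\dot\sigma_k^{ml}h_{mr}h_l^r+\epsilon\sigma_k^{\alpha}\dot\sigma_k^{ij}g_{ij}$ and multiplying by the prefactor $\binom{n}{k}^{-\alpha}\Theta^{k+1}(\tan\Theta)^{k\alpha}$ yields the remaining terms; collecting all contributions produces \eqref{keyeq1}.

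I do not expect a genuine obstacle here: the statement is a computational reformulation, structurally identical to the Euclidean Lemma \ref{lemkeyeq}. The only delicate points are that $\mathbb{S}^{n+1}$ is not scale invariant — handled by working throughout with the true scalar $\tilde\sigma_k=\sigma_k\Theta^{k}$ rather than a homothety, so that every $\Theta$-factor is constant along each $M_t$ and may be moved freely through $\bar\nabla$ — and the careful bookkeeping of the powers $\Theta^{\pm k\alpha}$, $\Theta^{k+1}$ and $(\tan\Theta)^{k\alpha}$ coming from \eqref{Theta}, \eqref{dtau} and the rescaling; the cancellation of the two excess Christoffel contractions, which depends only on the symmetry of $\dot\sigma_k^{ij}$, is exactly as in \eqref{divfree}.
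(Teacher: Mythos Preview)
Your proposal is correct and follows essentially the same route as the paper's proof: derive the evolution of $\tilde\sigma_k$ from \eqref{sigmakevo}, \eqref{Theta}, \eqref{dtau}, then use the divergence-free identity (Lemma~\ref{lem2.1}(iv)) together with the change-of-connection formula to pass from $\nabla$ to $\bar\nabla$ exactly as in \eqref{divfree}--\eqref{divfreen}, and finally the power identity \eqref{dj} with $d=\alpha+\tfrac{k-1}{k}$. The bookkeeping of the $\Theta$-factors you outline matches the paper's computation line by line (note that your $\epsilon\,\dot\sigma_k^{ij}g_{ij}$, coming straight from \eqref{sigmakevo}, is what the paper also uses in its derivation \eqref{sigmakevo3n}; the $h_{ij}$ appearing in the displayed statement is a typographical slip).
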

\begin{proof}
By using \eqref{sigmakevo},  \eqref{Theta} and \eqref{dtau}, we obtain  the following evolution equation for $\tilde{\sigma}_k=\sigma_k\Theta^k$.
\begin{equation}\label{sigmakevo3n}
\begin{aligned}
\frac{\partial}{\partial \tau}\tilde{\sigma}_k &=\binom{n}{k}^{-\alpha}\Theta^{-k\alpha}(\tan{\Theta})^{k\alpha}\Theta^{k+1}\dot{\sigma_k}^{ij}\nabla_i\nabla_j(\tilde{\sigma}_k^{\alpha})\\
&+\binom{n}{k}^{-\alpha}\Theta^{-k\alpha}(\tan{\Theta})^{k\alpha}(\tilde{\sigma}_k)^\alpha\Big(
\dot{\sigma_k}^{ml}h_{mr}h_l^r\Theta^{k+1}+\epsilon\dot{\sigma_k}^{ij}h_{ij}\Theta^{k+1}\Big)
-k \tilde{\sigma}_k.
\end{aligned}
\end{equation}
We can rewrite the first term on the right-hand side of \eqref{sigmakevo3n} in local coordinates as follows (here $D_i$ are the ordinary derivatives with respect to the local coordinates).
\begin{equation}\label{divfreen}
\begin{aligned}
\dot{\sigma_k}^{ij}\nabla_i\nabla_j(\tilde{\sigma}_k)^{\alpha}=&
\dot{\sigma_k}^{ij}(D_iD_j(\tilde{\sigma}_k)^{\alpha}-\Gamma^l_{ij}D_l(\tilde{\sigma}_k)^{\alpha})\\
=&D_i\big(\dot{\sigma_k}^{ij}D_j(\tilde{\sigma}_k)^{\alpha}\big)-D_i\big(\dot{\sigma_k}^{ij}\big)D_j(\tilde{\sigma}_k)^{\alpha}
-\dot{\sigma_k}^{ij}\Gamma^l_{ij}D_l(\tilde{\sigma}_k)^{\alpha}\\
=&D_i\big(\dot{\sigma_k}^{ij}D_j(\tilde{\sigma}_k)^{\alpha}\big)-\Big(\nabla_i\big(\dot{\sigma_k}^{ij}\big)-\Gamma^i_{il} \dot{\sigma_k}^{lj}- \Gamma^j_{il} \dot{\sigma_k}^{il}\Big)D_j(\tilde{\sigma}_k)^{\alpha}\\
&-\dot{\sigma_k}^{ij}\Gamma^l_{ij}D_l(\tilde{\sigma}_k)^{\alpha}\\
=&D_i\big(\dot{\sigma_k}^{ij}D_j(\tilde{\sigma}_k)^{\alpha}\big)+\Gamma^i_{il} \dot{\sigma_k}^{lj}D_j(\tilde{\sigma}_k)^{\alpha}\\
=&\bar{\nabla}_i\big(\dot{\sigma_k}^{ij}\bar{\nabla}_j(\tilde{\sigma}_k)^{\alpha}\big)+(\Gamma^i_{il}-\bar{\Gamma}^i_{il}) \dot{\sigma_k}^{lj}\bar{\nabla}_j(\tilde{\sigma}_k)^{\alpha},
\end{aligned}
\end{equation}
where we used Lemma \ref{lem2.1} (iv) in the fourth equality.
Let $d=\alpha+\frac{k-1}{k}$, then we have
\begin{equation}\label{djn}
D_j(\tilde{\sigma}_k)^{\alpha}=\alpha (\tilde{\sigma}_k)^{\alpha-1}D_j(\tilde{\sigma}_k)=\frac{\alpha}{d}(\tilde{\sigma}_k)^{\frac{1-k}{k}}D_j\big((\tilde{\sigma}_k)^d).
\end{equation}
We obtain \eqref{keyeq1} immediately by combining \eqref{sigmakevo3n}, \eqref{divfreen} and \eqref{djn}.
\end{proof}
\begin{rem}\label{rem4.12}
Note that $\Theta$ is  small on the time interval $[t_\delta,T)$, so $\Theta$ is comparable with $\tan{\Theta}$ on $[t_\delta,T)$, we have a uniform bound
on $\Theta^{-k\alpha}(\tan{\Theta})^{k\alpha}$. Due to the pinching estimate \eqref{3.14}, we have
$\sigma_k^{\frac{1-k}{k}}\dot{\sigma_k}^{ij}\approx g^{ij}$. By using \eqref{pinchingrho}, \eqref{multirelation} and Lemma \ref{lem4.10}, we obtain that $\Theta^2g^{ij}\approx \Theta^2\sin{u}^{-2}\sigma^{ij}$, which is uniformly bounded. Hence, we have that $\Theta^2(\sigma_k)^{\frac{1-k}{k}}\dot{\sigma_k}^{ij}\approx \sigma^{ij}$. Here $\approx$ has the meaning as explained in Remark \ref{approrem}.
\end{rem}

\begin{lem}\label{lemint1}
There exists a constant $C$ which only depends on $M_0$, $k,~\alpha$ and $n$  such that for any $\tau_1\geq\tau_{\delta}$ and $\tau_2=\tau_1+\log{2}$, we have
\begin{equation*}
\int_{\tau_1}^{\tau_2}\int_{\mathbb{S}^n}|\bar{\nabla} (\tilde{\sigma}_k^d)|^2d\mu_{\mathbb{S}^n}d\tau\leq C(1+\tau_2-\tau_1),
\end{equation*}
where $d=\alpha+\frac{k-1}{k}$ and $\bar{\nabla}$ denotes the Levi-Civita connection on the unit sphere $\mathbb{S}^n$.
\end{lem}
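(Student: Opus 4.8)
The plan is to transplant the proof of Lemma~\ref{lemint} to the spherical setting, using the rescaled evolution equation \eqref{sigmakevo3n} (equivalently \eqref{keyeq1}) in place of \eqref{sigmakevo3}, the comparison $\Theta^2(\sigma_k)^{\frac{1-k}{k}}\dot{\sigma_k}^{ij}\approx\sigma^{ij}$ from Remark~\ref{rem4.12} in place of \eqref{appro}, and the uniform bounds \eqref{uppersigmak1}--\eqref{upperlambda} together with Lemma~\ref{lem4.10} to control all error terms. The restriction $\tau_2=\tau_1+\log 2$ is precisely what makes Lemma~\ref{lem4.10} (hence the graph representation, the identity \eqref{divfreen}, and the comparisons above) valid on all of $[t_1,t_2]$, after recentering at a point of an inner ball of $\hat{M}_{t_2}$ as in Lemma~\ref{lem4.10}.

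First I would bound the left-hand integrand by the good quadratic form: by Remark~\ref{rem4.12}, $|\bar{\nabla}(\tilde{\sigma}_k^d)|^2\le C\,\Theta^2(\sigma_k)^{\frac{1-k}{k}}\dot{\sigma_k}^{ij}\bar{\nabla}_i(\tilde{\sigma}_k^d)\bar{\nabla}_j(\tilde{\sigma}_k^d)$, and since $d-\alpha=\frac{k-1}{k}$ one has $(\tilde{\sigma}_k)^{\frac{1-k}{k}}\bar{\nabla}(\tilde{\sigma}_k^d)=\frac{d}{\alpha}\bar{\nabla}(\tilde{\sigma}_k^\alpha)$ and $(\sigma_k)^{\frac{1-k}{k}}=\Theta^{k-1}(\tilde{\sigma}_k)^{\frac{1-k}{k}}$; hence, pulling the spatially constant factor $\Theta$ outside, $\int_{\mathbb{S}^n}|\bar{\nabla}(\tilde{\sigma}_k^d)|^2\,d\mu_{\mathbb{S}^n}\le C\frac{d}{\alpha}\Theta^{k+1}\int_{\mathbb{S}^n}\dot{\sigma_k}^{ij}\bar{\nabla}_i(\tilde{\sigma}_k^d)\bar{\nabla}_j(\tilde{\sigma}_k^\alpha)\,d\mu_{\mathbb{S}^n}$. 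I would then integrate this by parts on the closed manifold $\mathbb{S}^n$, using \eqref{divfreen} — which already absorbs the divergence-free property $\nabla_i(\dot{\sigma_k}^{ij})=0$ of Lemma~\ref{lem2.1}(iv), so that no derivative of $\dot{\sigma_k}^{ij}$ is produced — to obtain $-C\frac{d}{\alpha}\Theta^{k+1}\int_{\mathbb{S}^n}(\tilde{\sigma}_k^d)\big(\dot{\sigma_k}^{ij}\nabla_i\nabla_j(\tilde{\sigma}_k^\alpha)-(\Gamma^i_{il}-\bar{\Gamma}^i_{il})\dot{\sigma_k}^{lj}\bar{\nabla}_j(\tilde{\sigma}_k^\alpha)\big)\,d\mu_{\mathbb{S}^n}$. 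In the last term $\Theta^{k+1}\dot{\sigma_k}^{lj}\bar{\nabla}_j(\tilde{\sigma}_k^\alpha)=\frac{\alpha}{d}\,a^{lj}\bar{\nabla}_j(\tilde{\sigma}_k^d)$ with $a^{lj}:=\Theta^2(\sigma_k)^{\frac{1-k}{k}}\dot{\sigma_k}^{lj}\approx\sigma^{lj}$ bounded, and $\Gamma^i_{il}-\bar{\Gamma}^i_{il}$ is bounded by Lemma~\ref{lem4.10}(iii), so Young's inequality absorbs this contribution into $\varepsilon\int_{\mathbb{S}^n}|\bar{\nabla}(\tilde{\sigma}_k^d)|^2\,d\mu_{\mathbb{S}^n}$ (swallowed by the left side for $\varepsilon$ small) plus $C_\varepsilon\int_{\mathbb{S}^n}\tilde{\sigma}_k^{2d}|\Gamma-\bar{\Gamma}|^2\,d\mu_{\mathbb{S}^n}$, bounded by \eqref{uppersigmak1}.

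Next I would substitute \eqref{sigmakevo3n}, which gives $\Theta^{k+1}\dot{\sigma_k}^{ij}\nabla_i\nabla_j(\tilde{\sigma}_k^\alpha)=\binom{n}{k}^{\alpha}\Theta^{k\alpha}(\tan\Theta)^{-k\alpha}\big(\partial_\tau\tilde{\sigma}_k+k\tilde{\sigma}_k\big)-(\tilde{\sigma}_k)^\alpha\big(\dot{\sigma_k}^{ml}h_{mr}h_l^r\,\Theta^{k+1}+\epsilon\,\dot{\sigma_k}^{ij}h_{ij}\,\Theta^{k+1}\big)$. On $[\tau_\delta,\infty)$ every factor here is uniformly bounded: $\Theta^{k\alpha}(\tan\Theta)^{-k\alpha}$ because $\Theta=e^{-\tau}$ is small (Remark~\ref{rem4.12}); $\tilde{\sigma}_k$ by \eqref{uppersigmak1}; $\dot{\sigma_k}^{ml}h_{mr}h_l^r\,\Theta^{k+1}$ is a degree-$(k+1)$ curvature expression, hence $O(1)$ by \eqref{upperlambda}; and $\dot{\sigma_k}^{ij}h_{ij}\,\Theta^{k+1}=k\,\tilde{\sigma}_k\,\Theta$ is bounded. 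Multiplying by $\tilde{\sigma}_k^d$ and using $\tilde{\sigma}_k^d\,\partial_\tau\tilde{\sigma}_k=\frac{1}{d+1}\partial_\tau(\tilde{\sigma}_k^{d+1})$, all contributions except the $\partial_\tau(\tilde{\sigma}_k^{d+1})$ one have $\tau$-uniformly bounded integrand and contribute at most $C(\tau_2-\tau_1)$ after integrating over $[\tau_1,\tau_2]$. For the $\partial_\tau(\tilde{\sigma}_k^{d+1})$ term I would integrate once more by parts in $\tau$, writing $\Theta^{k\alpha}(\tan\Theta)^{-k\alpha}\tilde{\sigma}_k^d\,\partial_\tau\tilde{\sigma}_k=\frac{1}{d+1}\big(\partial_\tau\big(\Theta^{k\alpha}(\tan\Theta)^{-k\alpha}\tilde{\sigma}_k^{d+1}\big)-\tilde{\sigma}_k^{d+1}\,\partial_\tau\big(\Theta^{k\alpha}(\tan\Theta)^{-k\alpha}\big)\big)$: the exact $\tau$-derivative telescopes to a bounded boundary contribution (both $\Theta^{k\alpha}(\tan\Theta)^{-k\alpha}$ and $\int_{\mathbb{S}^n}\tilde{\sigma}_k^{d+1}\,d\mu_{\mathbb{S}^n}\le C|\mathbb{S}^n|$ are bounded), while $\partial_\tau\big(\Theta^{k\alpha}(\tan\Theta)^{-k\alpha}\big)$ is bounded since $\Theta=e^{-\tau}$, so this piece is at most $C(1+\tau_2-\tau_1)$. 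Collecting the two steps and absorbing the $\varepsilon$-term yields $\int_{\tau_1}^{\tau_2}\int_{\mathbb{S}^n}|\bar{\nabla}(\tilde{\sigma}_k^d)|^2\,d\mu_{\mathbb{S}^n}\,d\tau\le C(1+\tau_2-\tau_1)$.

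The main obstacle — essentially the only substantive difference from the Euclidean Lemma~\ref{lemint} — is bookkeeping the powers of $\Theta$: the induced geometry of $M_t$ degenerates like $\Theta^2$ as $\Theta\to 0$, so one must rely on Lemma~\ref{lem4.10} and Remark~\ref{rem4.12} to verify that the combinations actually occurring, namely $\Theta^2(\sigma_k)^{\frac{1-k}{k}}\dot{\sigma_k}^{ij}$, $\dot{\sigma_k}^{ml}h_{mr}h_l^r\,\Theta^{k+1}$, $\dot{\sigma_k}^{ij}h_{ij}\,\Theta^{k+1}$ and $\Theta^{k\alpha}(\tan\Theta)^{-k\alpha}$, are all $O(1)$ on $[\tau_\delta,\infty)$; in addition, the non-vanishing Christoffel mismatch $\Gamma^i_{il}-\bar{\Gamma}^i_{il}$, absent in the flat case, produces the extra first-order term handled above by Young's inequality. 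The time-integrated (rather than pointwise) form of the estimate is what is available here, since no lower bound on $\tilde{\sigma}_k$ is known at this stage — which is exactly why this estimate is needed to feed into the DiBenedetto--Friedman interior Hölder estimate.
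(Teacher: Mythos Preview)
Your proposal is correct and follows essentially the same strategy as the paper, but with one organizational difference worth noting. The paper carries out the integration-by-parts step on the moving hypersurface $(M_t,g,d\mu_t)$ using the intrinsic connection $\nabla$: there the divergence-free identity $\nabla_i(\dot{\sigma_k}^{ij})=0$ holds exactly, so no Christoffel mismatch appears and no Young-absorption is needed; the bookkeeping of the evolving measure is handled by differentiating the single quantity $\int_{M_t}\Theta^{-n}\tilde{\sigma}_k^{d+1}\,d\mu_t$, and only at the very end is everything converted to $(\mathbb{S}^n,\sigma,d\mu_{\mathbb{S}^n})$ via the comparability in Remark~\ref{rem4.12}. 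You instead work on $\mathbb{S}^n$ from the outset, which keeps the measure fixed and makes the $\tau$--integration by parts transparent, at the price of producing the extra $(\Gamma^i_{il}-\bar\Gamma^i_{il})$ term that you then absorb. Both routes are equally valid and use the same ingredients (Lemma~\ref{lem4.10}, Remark~\ref{rem4.12}, \eqref{sigmakevo3n}, \eqref{divfreen}, \eqref{uppersigmak1}, \eqref{upperlambda}); the paper's choice merely trades your Young step for a final metric/measure comparison.
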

\begin{proof}
Since $(\sigma_k)^{\frac{1-k}{k}}\dot{\sigma_k}^{ij}\approx g^{ij}$, we have that
\begin{equation}\label{int1n}
\begin{aligned}
\int_{M_t}|\nabla (\tilde{\sigma}_k^d)|^2d\mu_t&\leq C \int_{M_t}(\sigma_k)^{\frac{1-k}{k}}\dot{\sigma_k}^{ij}\nabla_i(\tilde{\sigma}_k^d)\nabla_j(\tilde{\sigma}_k^d)d\mu_t\\
&=C\cdot \int_{M_t}\frac{d}{\alpha}\Theta^{k-1}\dot{\sigma_k}^{ij}\nabla_i(\tilde{\sigma}_k^\alpha)\nabla_j(\tilde{\sigma}_k^d)d\mu_t\\
&=-C\cdot\frac{d}{\alpha}\int_{M_t}\Theta^{k-1}(\tilde{\sigma}_k^d)\dot{\sigma_k}^{ij}\nabla_i\nabla_j(\tilde{\sigma}_k^\alpha)d\mu_t,
\end{aligned}
\end{equation}
where we used integration by parts and Lemma \ref{lem2.1} (iv) in the last equality, and $C$ is a constant depending on $M_0,~k,~\alpha$ and $n$.
Using \eqref{vol}, \eqref{Theta} and \eqref{dtau}, we have
\begin{equation*}
\begin{aligned}
\frac{d}{d\tau}\int_{M_t}\Theta^{-n}\tilde{\sigma}_k^{d+1}d\mu_t=&-\int_{M_t}\binom{n}{k}^{-\alpha} \Theta^{1-n}(\tan{\Theta})^{k\alpha}\tilde{\sigma}_k^{d+1}\sigma_k^\alpha Hd\mu_t\\
&+n \int_{M_t}  \Theta^{-n}\tilde{\sigma}_k^{d+1}d\mu_t+\int_{M_t} \Theta^{-n}\frac{\partial}{\partial \tau}(\tilde{\sigma}_k)^{d+1}d\mu_t.
\end{aligned}
\end{equation*}
Then by using \eqref{sigmakevo3n}, we have
\begin{equation}\label{sigmakevo5}
\begin{aligned}
\frac{d}{d\tau}\int_{M_t}\Theta^{-n}\tilde{\sigma}_k^{d+1}d\mu_t=&-\int_{M_t}\binom{n}{k}^{-\alpha} \Theta^{1-n}(\tan{\Theta})^{k\alpha}\tilde{\sigma}_k^{d+1}\sigma_k^\alpha Hd\mu_t+n \int_{M_t}  \Theta^{-n}\tilde{\sigma}_k^{d+1}d\mu_t
\\
&+\int_{M_t}\Theta^{-n}(d+1)(\tilde{\sigma}_k)^{d}
\binom{n}{k}^{-\alpha}\Theta^{-k\alpha}(\tan{\Theta})^{k\alpha}\Theta^{k+1}\dot{\sigma_k}^{ij}\nabla_i\nabla_j(\tilde{\sigma}_k^{\alpha})d\mu_t\\
&+\int_{M_t}\Theta^{-n}(d+1)(\tilde{\sigma}_k)^{d}\binom{n}{k}^{-\alpha}\Theta^{-k\alpha}(\tan{\Theta})^{k\alpha}(\tilde{\sigma}_k)^\alpha
\dot{\sigma_k}^{ml}h_{mr}h_l^r\Theta^{k+1}d\mu_t\\
&+\int_{M_t}\Theta^{-n}(d+1)(\tilde{\sigma}_k)^{d}\binom{n}{k}^{-\alpha}
\Theta^{-k\alpha}(\tan{\Theta})^{k\alpha}(\tilde{\sigma}_k)^\alpha\epsilon\dot{\sigma_k}^{ij}h_{ij}\Theta^{k+1}d\mu_t\\
&-\int_{M_t}\Theta^{-n}(d+1)(\tilde{\sigma}_k)^{d}\cdot k \tilde{\sigma}_kd\mu_t.
\end{aligned}
\end{equation}
From \eqref{int1n} and \eqref{sigmakevo5}, we have
\begin{equation}\label{int2n}
\begin{aligned}
&\int_{M_t}\frac{\alpha}{d}(d+1)\Theta^{-k\alpha}(\tan{\Theta})^{k\alpha}\binom{n}{k}^{-\alpha}\Theta^{2-n}|\nabla (\tilde{\sigma}_k^d)|^2d\mu_t\\
\leq & -C\cdot\int_{M_t}\Theta^{-n}(d+1)(\tilde{\sigma}_k)^{d}
\binom{n}{k}^{-\alpha}\Theta^{-k\alpha}(\tan{\Theta})^{k\alpha}\Theta^{k+1}\dot{\sigma_k}^{ij}\nabla_i\nabla_j(\tilde{\sigma}_k^{\alpha})d\mu_t\\
=&C\cdot\Big(-\frac{d}{d\tau}\int_{M_t}\Theta^{-n}\tilde{\sigma}_k^{d+1}d\mu_t-\int_{M_t}\binom{n}{k}^{-\alpha} \Theta^{1-n}(\tan{\Theta})^{k\alpha}\tilde{\sigma}_k^{d+1}\sigma_k^\alpha Hd\mu_t+n \int_{M_t}  \Theta^{-n}\tilde{\sigma}_k^{d+1}d\mu_t\\
&+\int_{M_t}\Theta^{-n}(d+1)(\tilde{\sigma}_k)^{d}\binom{n}{k}^{-\alpha}\Theta^{-k\alpha}(\tan{\Theta})^{k\alpha}(\tilde{\sigma}_k)^\alpha
\dot{\sigma_k}^{ml}h_{mr}h_l^r\Theta^{k+1}d\mu_t\\
&+\int_{M_t}\Theta^{-n}(d+1)(\tilde{\sigma}_k)^{d}\binom{n}{k}^{-\alpha}
\Theta^{-k\alpha}(\tan{\Theta})^{k\alpha}(\tilde{\sigma}_k)^\alpha\epsilon\dot{\sigma_k}^{ij}h_{ij}\Theta^{k+1}d\mu_t\\
&-\int_{M_t}\Theta^{-n}(d+1)(\tilde{\sigma}_k)^{d}\cdot k \tilde{\sigma}_kd\mu_t\Big),
\end{aligned}
\end{equation}
Since $\tilde{\sigma}_k$-curvature and the principal curvatures $\tilde{\lambda}_i$ of the rescaled hypersurfaces are uniformly bounded from above for all $\tau>0$,
the volume of $M_t$ is comparable with $\Theta^n$,
from \eqref{int2n}, we obtain that there exist another constants $C'$ which only depends on $M_0,~k,~\alpha$ and $n$ such that
\begin{equation*}
\begin{aligned}
&\int_{M_t}\frac{\alpha}{d}(d+1)\Theta^{-k\alpha}(\tan{\Theta})^{k\alpha}\binom{n}{k}^{-\alpha}\Theta^{2-n}|\nabla (\tilde{\sigma}_k^d)|^2d\mu_t\\
\leq& -C\frac{d}{d\tau}\int_{M_t}\Theta^{-n}\tilde{\sigma}_k^{d+1}d\mu_t+C'.
\end{aligned}
\end{equation*}
Consequently,  we get
\begin{equation*}
\begin{aligned}
&\int_{\tau_1}^{\tau_2}\int_{M_t}\frac{\alpha}{d}(d+1)\Theta^{-k\alpha}(\tan{\Theta})^{k\alpha}\binom{n}{k}^{-\alpha}\Theta^{2-n}|\nabla (\tilde{\sigma}_k^d)|^2d\mu_t\\
\leq& C(1+\tau_2-\tau_1),
\end{aligned}
\end{equation*}
since $\tilde{\sigma}_k$ is uniformly bounded from above and $\int_{M_t}\Theta^{-n}d\mu_t$ is comparable with $\int_{\mathbb{S}^n}d\mu_{\mathbb{S}^n}$, here $C$ is another constant
which only depends on $M_0,~k,~\alpha$ and $n$.
Using Remark \ref{rem4.12}, we obtain that
$\int_{\tau_1}^{\tau_2}\int_{\mathbb{S}^n}|\bar{\nabla} (\tilde{\sigma}_k^d)|^2d\mu_{\mathbb{S}^n}d\tau$ is comparable with
$$\int_{\tau_1}^{\tau_2}\int_{M_t}\frac{\alpha}{d}(d+1)\Theta^{-k\alpha}(\tan{\Theta})^{k\alpha}\binom{n}{k}^{-\alpha}\Theta^{2-n}|\nabla (\tilde{\sigma}_k^d)|^2d\mu_t.$$
Then Lemma \ref{lemint1} follows immediately.
\end{proof}
Armed with Lemma \ref{lemkeyeq1}, Remark \ref{rem4.12} and Lemma \ref{lemint1}, we conclude by using the interior H\"{o}lder estimates of  DiBenedetto and Friedman \cite{DF1985}
that
\begin{lem}\label{holder1}
There exist universal constants $C>0,~\eta>0$ and $\beta\in(0,1)$ such that for every $(p,\tau)\in M\times[\tau_\delta+\eta,\infty)$, the $\beta$-H\"{o}lder norm in space-time of $\tilde{\sigma}_k$
on $B_{\eta}(p)\times(\tau-\eta,\tau+\eta)$ is bounded by $C$.
\end{lem}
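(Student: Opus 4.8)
The plan is to mirror, on the sphere, the argument used for the Euclidean case (Lemma~\ref{holder}): exhibit $\tilde\sigma_k$, read in suitable $\tau$‑independent local coordinates, as a locally bounded weak solution of a degenerate divergence‑form parabolic equation of porous‑medium type whose structure constants do not depend on $\tau$, and then invoke the interior H\"older estimate of DiBenedetto and Friedman \cite{DF1985}. The substitution that makes this work is already built into equation \eqref{keyeq1} of Lemma~\ref{lemkeyeq1}, where the second‑order part appears in divergence form acting on $\tilde\sigma_k^{\,d}$, $d=\alpha+\frac{k-1}{k}$, with a degenerate (homogeneous of degree zero) diffusion coefficient.

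First I would cover $[\tau_\delta+\log 2,\infty)$ by the dyadic intervals $[\tau_1,\tau_2]$ with $\tau_2=\tau_1+\log 2$ used in Lemma~\ref{lem4.10}: on each such interval I take $y_0$ to be the centre of an inner ball of $\hat M_{t_2}$ and write $M_t=\mathrm{graph}\,u(\cdot,t)$ over $\mathbb{S}^n$ for $t\in[t_1,t_2]$. By Lemma~\ref{lem4.10}, on this interval $u$ is comparable to $\Theta$, $v$ is bounded, and $\Gamma^i_{il}-\bar\Gamma^i_{il}$ is bounded, with constants depending only on $M_0,k,\alpha,n$. In the fixed coordinates $\theta\in\mathbb{S}^n$, equation \eqref{keyeq1} becomes a scalar equation
\[
\frac{\partial}{\partial\tau}\tilde\sigma_k=\bar\nabla_i\bigl(a^{ij}\,\bar\nabla_j(\tilde\sigma_k^{\,d})\bigr)+b^i\,\bar\nabla_i(\tilde\sigma_k^{\,d})+c,
\]
with $a^{ij}=\tfrac{\alpha}{d}\binom{n}{k}^{-\alpha}\Theta^{-k\alpha}(\tan\Theta)^{k\alpha}\,\Theta^2(\sigma_k)^{\frac{1-k}{k}}\dot{\sigma_k}^{ij}$, the drift $b^i$ collecting the term involving $\Gamma^i_{il}-\bar\Gamma^i_{il}$, and $c=-k\tilde\sigma_k$ plus the remaining zero‑order curvature terms in \eqref{keyeq1}.

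The heart of the proof is then to check that this equation falls exactly into the degenerate class treated in \cite{DF1985}, with constants uniform in $\tau$. By Remark~\ref{rem4.12} — which uses the pinching estimate \eqref{3.14}, together with \eqref{pinchingrho}, \eqref{multirelation} and Lemma~\ref{lem4.10} — the matrix $a^{ij}$ is uniformly elliptic, $\Lambda^{-1}\sigma^{ij}\le a^{ij}\le\Lambda\sigma^{ij}$ with $\Lambda=\Lambda(M_0,k,\alpha,n)$, since $\Theta^{-k\alpha}(\tan\Theta)^{k\alpha}\to 1$ on $[t_\delta,T)$ and $\Theta^2(\sigma_k)^{\frac{1-k}{k}}\dot{\sigma_k}^{ij}\approx\sigma^{ij}$. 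Similarly $b^i$ is bounded (homogeneous of degree zero in $\tilde\lambda$, times the bounded factor $\Gamma^i_{il}-\bar\Gamma^i_{il}$) and $c$ is bounded by the uniform upper bounds \eqref{uppersigmak1}, \eqref{upperlambda} on $\tilde\sigma_k$ and $\tilde\lambda_{\max}$; finally $0\le\tilde\sigma_k\le C$ and, by Lemma~\ref{lemint1}, $\int_{\tau_1}^{\tau_2}\!\int_{\mathbb{S}^n}|\bar\nabla(\tilde\sigma_k^{\,d})|^2\,d\mu_{\mathbb{S}^n}\,d\tau\le C$. Thus $\tilde\sigma_k$ is a bounded weak solution with the required structure, and the interior H\"older estimate of DiBenedetto and Friedman yields, on an interior parabolic cylinder $B_\eta(p)\times(\tau_0-\eta,\tau_0+\eta)$ inside each dyadic slab, a bound on the $\beta$‑H\"older norm of $\tilde\sigma_k$ in space‑time for some $\beta\in(0,1)$, depending only on $\Lambda$ and the $L^\infty$‑ and $L^2$‑bounds above, hence only on $M_0,k,\alpha,n$ and not on $\tau_0$. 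Covering $[\tau_\delta+\log 2,\infty)$ this way and absorbing the fixed finite window $[\tau_\delta,\tau_\delta+\log 2]$ (where the flow is uniformly smooth and uniformly parabolic by short‑time theory) gives the claimed uniform bound on $M\times[\tau_\delta+\eta,\infty)$ after enlarging $\eta$.

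The main obstacle is precisely the degeneracy: we have no positive lower bound for $\tilde\sigma_k$, so the coefficient $a^{ij}$ in the natural $\tilde\sigma_k$‑form degenerates along $\{\tilde\sigma_k=0\}$, and the uniformly parabolic H\"older estimates of \cite{andrews2004fully} or \cite{lieberman1996second} are not available. Rewriting the equation as an equation for $\tilde\sigma_k$ with the second‑order term acting on $\tilde\sigma_k^{\,d}$ is what brings it into the porous‑medium framework of \cite{DF1985}; the only genuinely delicate point then is verifying that the degenerate structure conditions (ellipticity of $a^{ij}$, boundedness of $b^i$ and $c$, and the local $L^2$‑bound on $\bar\nabla(\tilde\sigma_k^{\,d})$) hold with constants independent of $\tau$, and this is exactly what Lemma~\ref{lemkeyeq1}, Remark~\ref{rem4.12} and Lemma~\ref{lemint1} supply. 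As the remaining details parallel \cite[Lemma~3.4]{SCHULZE2006} verbatim, they may be omitted.
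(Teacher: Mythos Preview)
Your proposal is correct and follows exactly the paper's own approach: the paper merely states that, armed with Lemma~\ref{lemkeyeq1}, Remark~\ref{rem4.12} and Lemma~\ref{lemint1}, one concludes by the interior H\"older estimates of DiBenedetto and Friedman \cite{DF1985}, and your write-up supplies precisely the details that make this invocation legitimate (uniform ellipticity of $a^{ij}$, boundedness of the drift and zero-order terms, the $L^2$-gradient bound, and the dyadic covering by the intervals of Lemma~\ref{lem4.10}). Your explicit identification of the degeneracy as the reason for needing \cite{DF1985} rather than the uniformly parabolic theory is also exactly the point the paper emphasises in the surrounding discussion.
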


In the remaining part of the this section, we finish the proof of Theorem \ref{thm1.2}.
Let $q\in\mathbb{S}^{n+1}$ be the point that the flow hypersurfaces $M_t$ shrink to as $t$ approaches $T$, we introduce geodesic polar coordinates with center $q$.
We will prove that the rescaled function $\tilde{u}(p,\tau)=u(p,t)\Theta(t,T)^{-1}$  converges exponentially in  $C^{\infty}(\mathbb{S}^{n})$ to the constant function $1$ as $\tau\to \infty$.
First, we note that although the rescaled principal curvatures $\tilde{\lambda}_i(p,t)=\lambda_i(p,t) \cdot\Theta(t,T)$ are not the principal curvatures of the graph of $\tilde{u}(p,t)$, they are closely related. From the expression of $h^i_j$ (see \eqref{hij}), the uniform upper bound on $\tilde{\lambda}_{\text{max}}$ (see \eqref{upperlambda}) and the $C^0,~C^1$ estimates of $\tilde{u}$ in Lemma \ref{lem4.10}, we obtain uniform
$C^2$-estimate of $\tilde{u}$ with respect to the metric of the unit sphere $\mathbb{S}^n$.
At each fixed  time $\tau_j$, we take a point $p_j\in\mathbb{S}^{n}$ such that $\tilde{u}(p_j,\tau_j)$ attains a maximum at $p_j$, then we have
$\tilde{\lambda}_i(p_j,\tau_j)=\lambda_i(p_j,\tau_j)\cdot\Theta(t_j,T)\geq \tan{u(p_j,\tau_j)}^{-1}\Theta(t_j,T)\geq C_5>0$, where $C_5$ is a constant which only depends on $M_0$, and $\tau_j$ and $t_j$ are related by $\tau_j=-\log{\Theta(t_j,T)}$.
For each $(p_j,\tau_j)$,  we have that
$\tilde{\sigma}_k(p_j,\tau_j)\geq \tilde{C}=\binom{n}{k}C_5^k$, then the H\"{o}lder  estimate (Lemma \ref{holder1}) implies that there exists a constant $\eta>0$ which does not depend on the time sequence $\tau_j$
 such that $\tilde{\sigma}_k\geq \tilde{C}/2$ on $B_{\eta}(p_j)\times[\tau_j-\eta,\tau_j+\eta]$. This together with the pinching estimate \eqref{3.14} and the uniform upper bound on $\tilde{\lambda}_i$ (see \eqref{upperlambda}) implies that
 $\tilde{u}$ satisfies a uniform parabolic equation on $B_{\eta}(p_j)\times[\tau_j-\eta,\tau+\eta]$ by using \eqref{s2:ut-evl}, \eqref{Theta} and \eqref{dtau}:
 \begin{equation}\label{tildeu}
 \frac{\partial}{\partial\tau}\tilde{u}=-\binom{n}{k}^{-\alpha}\Theta^{-k\alpha}(\tan{\Theta})^{k\alpha}v\tilde{\sigma}_k^\alpha+\tilde{u},
 \end{equation}
 where $v$ is the function defined in \eqref{s2:v-def}.
Since the speed function can be written in the form $F=\sigma_k^{\alpha}=(\sigma_k^{1/k})^{k\alpha}$ and $\sigma_k^{1/k}$ is a concave function of the principal curvatures,
we can apply the  H\"{o}lder estimate by Andrews \cite[Theorem 6]{andrews2004fully} (we can also apply the H\"{o}lder estimate in the case of one space dimension in  \cite{lieberman1996second} since the graph function only depends on one space variable) and
parabolic Schauder estimate \cite{lieberman1996second} to get uniform $C^\infty$ estimates of $\tilde{u}$ on $B_{\eta/2}(p_j)\times[\tau_j-\eta/2,\tau_j+\eta/2]$.
Since $\mathbb{S}^{n}$ is compact, there exists a subsequence (again denoted by $\tau_j$) such that $\{p_j\}$ converge to a point $p_\infty\in\mathbb{S}^{n}$, then we obtain uniform $C^\infty$ estimates of $\tilde{u}$ on $B_{\eta/2}(p_\infty)\times[\tau_j-\eta/2,\tau_j+\eta/2]$.
On the other hand, using our pinching estimate \eqref{3.1}, we have
\begin{equation*}
0 \le \frac{\lambda_{max}}{\lambda_{min}}+\frac{\lambda_{min}}{\lambda_{max}}-2 \le \frac{C}{\sigma_k^{2(\alpha-\frac{1}{k})}}= \frac{C\Theta^{2k(\alpha-\frac{1}{k})}}{\tilde{\sigma}_k^{2(\alpha-\frac{1}{k})}},
\end{equation*}
which together with the uniform positive lower bound on $\tilde{\sigma}_k$ implies that
\begin{equation*}
1 \le \frac{\lambda_{max}(p,\tau_j)}{\lambda_{min}(p,\tau_j)}\leq 1+C \Theta^{k(\alpha-\frac{1}{k})}=1+Ce^{-k(\alpha-\frac{1}{k})\tau_j},~\forall ~p\in B_{\eta/2}(p_\infty).
\end{equation*}
This in combination with the uniform upper bound on $\tilde{\lambda}_{\text{max}}$ (see \eqref{upperlambda}) implies that the trace-less part of the rescaled second fundamental form of $M_{t_j}$ has the following exponential decay
\begin{equation}\label{tracelessA}
  |\mathring{\tilde{A}}|(p,\tau_j)\leq Ce^{-k(\alpha-\frac{1}{k})\tau_j},\quad \forall~p\in B_{\eta/2}(p_\infty),
\end{equation}
where $C=C(M_0)$.
The $C^\infty$-estimates of $\tilde{u}$ and the fact that $\tilde{g}^{ij}=\Theta^2 g^{ij}\approx \sigma^{ij}$ imply uniform estimates for
\begin{equation*}
  |\nabla^m\tilde{A}|^2:=\tilde{g}^{i_1j_1}\cdots \tilde{g}^{i_mj_m}\nabla_{i_1i_2\cdots i_m}(\Theta h_k^l)~\nabla_{j_1j_2\cdots j_m}(\Theta h_l^k)
\end{equation*}
for all $m\in \mathbb{N}$, where $\nabla$ is the Levi-Civita connection with respect to the metric $g$ on $M_{t_j}$.
This together with \eqref{tracelessA} implies by interpolation that
\begin{equation*}
  |\nabla\mathring{\tilde{A}}|^2(p,\tau_j)\leq Ce^{-k(\alpha-\frac{1}{k})\tau_j},\quad \forall~p\in B_{\eta/2}(p_\infty).
\end{equation*}
By using the inequality  (cf. \cite[\S 2]{HUISKEN1984})
$
  |\nabla H|^2\leq \frac {n+2}{3}|\nabla A|^2
$,
we have
\begin{equation*}
  |\nabla A|^2=|\nabla\mathring{A}|^2+\frac {1}{n}|\nabla H|^2\leq |\nabla\mathring{A}|^2+\frac {n+2}{3n}|\nabla A|^2.
\end{equation*}
Therefore, we obtain
\begin{equation*}
\begin{aligned}
  |\nabla \tilde{A}|^2(p,\tau_j)&=\Theta^4|\nabla A|^2(p,\tau_j)\leq ~\Theta^4\cdot\frac{3n}{2n-2}|\nabla\mathring{A}|^2(p,\tau_j)\\
  &=\frac{3n}{2n-2}|\nabla\mathring{\tilde{A}}|^2(p,\tau_j)\leq ~Ce^{-k(\alpha-\frac{1}{k})\tau_j},\quad \forall~p\in B_{\eta/2}(p_\infty).
  \end{aligned}
\end{equation*}
This leads to the following estimate
\begin{equation*}
\begin{aligned}
&(\max_{B_{\eta/2}(p_\infty)}\tilde{\sigma}_k(p,\tau_j)-\min_{B_{\eta/2}(p_\infty)}\tilde{\sigma}_k(p,\tau_j))\\
\leq & C \Theta^k|A|^{k-1}|\nabla A|\text{diam}(M_{t_j}\cap \text{graph}~u(\cdot,t_j)|_{B_{\eta/2}(p_\infty)})\\
= &  C |\tilde{A}|^{k-1}|\nabla \tilde{A}|\Theta^{-1}\text{diam}(M_{t_j}\cap \text{graph}~u(\cdot,t_j)|_{B_{\eta/2}(p_\infty)})\\
\leq & Ce^{-\frac{k}{2}(\alpha-\frac{1}{k})\tau_j},
\end{aligned}
\end{equation*}
where $\tau_j=-\log{\Theta(t_j,T)}$.
Therefore, $\tilde{\sigma}_k(p,\tau_j)$ becomes arbitrary close to  $\tilde{\sigma}_k(p_\infty,\tau_j)$ in $B_{\eta/2}(p_\infty)$ as $\tau_j\to\infty$. Using the uniform H\"{o}lder estimate of $\tilde{\sigma}_k$, we obtain that for $j$ large, we can obtain that $\tilde{\sigma}_k\geq \tilde{C}/2$ holds on a larger region. If we repeat the same argument as above,
then we can extend the region where $\tilde{u}(p,\tau_j)$ has uniform $C^{\infty}$ estimates. As $\mathbb{S}^n$ is compact,
after finite steps, we obtain that $\tilde{u}(p,\tau_j)$ has uniform $C^{\infty}$ estimates on $\mathbb{S}^n$.
The above argument applies to any sequence $\tau_j$,
and the estimates do not depend on the sequence $\tau_j$, hence we obtain that $\tilde{u}(\cdot,\tau)$  obeys uniform a priori estimates in $C^{\infty}(\mathbb{S}^n)$ independently of $\tau$. Then by using the pinching estimates and the interpolation inequality, we get that $|\nabla\tilde{A}|^2(p,\tau)\leq Ce^{-k(\alpha-\frac{1}{k})\tau}$
for $\tau\in[\tau_\delta,\infty)$. After a similar argument to that in Section 8 of \cite{Gerhardt2015} (see Lemma 8.12, Corollary 8.13 and Theorem 8.14), we obtain that $\tilde{u}(\cdot,\tau)$ converges exponentially fast to the constant function $1$ in $C^\infty$-topology as $\tau\to \infty$.
This completes the proof of Theorem \ref{thm1.2}.

\appendix
\section{Sturm's Theorem and the Computer Algorithm}\label{app}
In this section, we will prove that for each $n$ and any fixed $k$ with $1\leq k\leq n$, there exists a constant $ c_0(n,k) >\frac{1}{k}$ such that
the polynomial $Q$ defined by \eqref{Q} is non-positive for any $x>0$ and $\alpha= c_0(n,k) $. This is needed in the proof of Theorem \ref{thm3.1}.
Note that in order to make sure that $Q$ defined by \eqref{Q} is non-positive for any $x>0$ and $\alpha= c_0(n,k) $, the highest coefficient of $Q$ needs to be non-positive, which implies that
\begin{equation}\label{estalpha}
\frac{1}{k} \le  c_0(n,k)  \le \frac{1}{k-1},~\forall~k\geq 2.
\end{equation}

In the case $1<k\le n \le k^2$, we  prove that $ c_0(n,k) =\frac{1}{k-1}$ satisfies that $Q$ is non-positive for any $x>0$ and $\alpha= c_0(n,k) $, see Proposition  \ref{thmA1}.
For general case, we prove that for each $n$ and any fixed $k$ with $1\leq k\leq n$, we can find a constant $ c_0(n,k) >\frac{1}{k}$ such that
$Q$ is non-positive for any $x>0$ and $\alpha= c_0(n,k) $. Although we cannot write down $ c_0(n,k) $ in term of an explicit function of $n$ and $k$, $ c_0(n,k) $  can be precisely determined by  applying Sturm's theorem,  see Proposition \ref{thmA2}. We list some of the  values of $ c_0(n,k)$ (see \eqref{listc0}). We also give some estimates of the constant $ c_0(n,k) $. We prove that $ c_0(n,1)\ge 1+\frac{7}{n}$ and $ c_0(n,k) \geq \frac{1}{k}+\frac{k}{(k-1)n}$ for $k\ge 2$ and $n\geq k^2$, see Proposition  \ref{thmA3} and Proposition \ref{thmA4}. The estimate for the case with $k\geq 2$ and $n\geq k^2$ is optimal in the sense that when $n=k^2$, the lower bound $\frac{1}{k}+\frac{k}{(k-1)n}$ equals $\frac{1}{k-1}$, which is an upper bound for $ c_0(n,k) $ (see \eqref{estalpha}).

First, in the case $1<k\le n \le k^2$, we have the following result.
\begin{prop}\label{thmA1}
If $1<k\le n \le k^2$, $\alpha=\frac{1}{k-1}$, then all the coefficients of $Q=Q(x)$ defined by \eqref{Q} are non-positive, which implies that $Q$ is non-positive for any $x>0$.
\end{prop}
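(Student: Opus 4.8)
The plan is to substitute $\alpha=\frac{1}{k-1}$ directly into the explicit formula \eqref{Q} for $Q=Q(x)$ and to verify, one coefficient at a time, that the resulting coefficient $q_j$ of $x^j$ ($j=0,1,\dots,6$) is non-positive on the region of interest, namely $k\ge 2$ (an integer) and $k\le n\le k^2$. Since $Q$ is a quadratic polynomial in $\alpha$, multiplying $q_j$ by $(k-1)^2$ turns it into an explicit polynomial in $k$ and $n$, so the whole proof reduces to an elementary sign analysis of seven polynomials.

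Four of the seven are immediate. The leading coefficient contains the factor $\alpha(k-1)-1$, which vanishes at $\alpha=\frac{1}{k-1}$, so $q_6=0$. A short computation gives $q_5=\frac{4k(n-k^2)}{(k-1)^2}$, which is $\le 0$ because $n\le k^2$. Similarly $q_0=\frac{(2k-1)(k-n)^3}{k-1}\le 0$ since $n\ge k$ and $2k-1>0$, and $q_1=-\frac{(n-k)^2(6k^2-8k-n)}{k-1}\le 0$ because $6k^2-8k-n\ge 6k^2-8k-k^2=k(5k-8)\ge 0$ for $k\ge 2$. For $q_2$ one finds $q_2=\frac{(k-n)\bigl(6k^4-24k^3+22k^2+(3k^2-k-4)n\bigr)}{(k-1)^2}$; here $k-n\le 0$, while the bracket is increasing in $n$ because $3k^2-k-4=(3k-4)(k+1)>0$, and at $n=k$ it equals $k(6k^3-21k^2+21k-4)\ge 0$ for $k\ge 2$, so the bracket is non-negative on $[k,k^2]$ and $q_2\le 0$.

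The two remaining coefficients $q_3$ and $q_4$ genuinely depend quadratically on $n$, so I would exploit the sign of their leading ($n^2$-)coefficient. One checks that the coefficient of $n^2$ in $q_4$ is $2(k\alpha-1)^2=\frac{2}{(k-1)^2}>0$, so $q_4$ is a convex function of $n$ and attains its maximum on $[k,k^2]$ at an endpoint; direct substitution yields $q_4|_{n=k}=-\frac{5k^2(k-2)}{k-1}\le 0$ and $q_4|_{n=k^2}=-\frac{5k^3}{k-1}<0$ for $k\ge 2$, hence $q_4\le 0$ throughout. For $q_3$ the coefficient of $n^2$ is $\frac{2(k-3)}{(k-1)^2}$, which is $\ge 0$ for $k\ge 3$; in that case $q_3$ is again convex (or linear) in $n$, and one computes $q_3|_{n=k}=-\frac{k^2(2k-3)(k-2)}{k-1}\le 0$ and $q_3|_{n=k^2}=-\frac{k^3(9k-16)}{k-1}<0$. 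The only remaining integer is $k=2$ (where $\alpha=1$), for which a direct substitution gives the closed form $q_3=-2n(n-2)\le 0$ for $n\ge 2$. Since every $q_j$ is non-positive, $Q(x)=\sum_{j=0}^{6}q_jx^j\le 0$ for all $x>0$.

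The computations are long but entirely mechanical once $\alpha=\frac{1}{k-1}$ is fixed; the only real decisions are (i) recognizing that $q_5,q_2,q_1,q_0$ are best treated through their factored forms rather than by endpoint checks, and (ii) using the signs of the $n^2$-coefficients of $q_3$ and $q_4$ to reduce those two cases to the endpoints $n=k$ and $n=k^2$. The main nuisance will be the single value $k=2$: there the parabola $q_3(n)$ opens downward, so the endpoint reduction fails and one must instead invoke the explicit expression $q_3=-2n(n-2)$.
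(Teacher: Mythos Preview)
Your proposal is correct and is essentially identical to the paper's own proof: the paper also substitutes $\alpha=\frac{1}{k-1}$, checks $c_6=0$ and $c_0,c_1,c_5\le 0$ directly from their factored forms, and handles $c_3,c_4$ by noting that the coefficient of $n^2$ is $\frac{2(k-3)}{(k-1)^2}$ (resp.\ $2(\alpha k-1)^2$) so that convexity in $n$ reduces to checking the endpoints $n=k$ and $n=k^2$, with the special case $k=2$ for $c_3$ treated via the explicit formula $c_3=-2n(n-2)$. The only (inessential) difference is your treatment of $q_2$: you use monotonicity of the bracket and evaluate at $n=k$, whereas the paper separates out $n=k$ as trivially zero and then evaluates at $n=k+1$; your version is slightly cleaner but the argument is the same.
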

\begin{proof}
We denote the coefficients of $Q=Q(x)$ by $c_i$, i.e., we write
\[
Q(x)=c_6 x^6+c_5 x^5+ c_4 x^4+c_3 x^3+ c_2 x^2 +c_1 x^1 +c_0.
\]
We will prove that if $1<k\le n \le k^2$, $\alpha=\frac{1}{k-1}$, then $c_i\leq 0$, $i=0,1,\cdots,6$.

If $1<k\le n \le k^2$, $\alpha=\frac{1}{k-1}$, it is obvious that $c_6=0$ and $c_0$ is non-positive. We will estimate $c_1$ to $c_5$ one by one.
For $c_1$, if $1<k\le n \le k^2$, $\alpha=\frac{1}{k-1}$, then we have that
\begin{equation*}
\begin{aligned}
c_1&= \frac{(n-k)^2(n-6k^2+8k)}{k-1}\leq \frac{(n-k)^2(k^2-6k^2+8k)}{k-1}\\
&=\frac{(n-k)^2(-5k^2+8k)}{k-1}\leq0.
\end{aligned}
\end{equation*}

For $c_2$, note that from the expression of $c_2$ we know that $c_2=0$ if $n=k$, hence we only need to consider that case that $k+1\leq n\leq k^2$.
If $2<k+1\le n \le k^2$, $\alpha=\frac{1}{k-1}$, then we have
\begin{equation*}
\begin{aligned}
c_2&= (k-n)\big(\alpha^2 k^2(k-2n+3)+\alpha k(6k^2-k(3n+22)+12n+3)+3k(n+1)-4n\big)\\
&=\frac{k-n}{(k-1)^2}(2k^2(3k^2-12k+11)+n(k+1)(3k-4))\\
&\leq \frac{k-n}{(k-1)^2}(2k^2(3k^2-12k+11)+(k+1)(k+1)(3k-4))\\
&= \frac{k-n}{k-1}(6k^3-15k^2+9k+4)<0.
\end{aligned}
\end{equation*}

For $c_3$, we can regard $c_3$ as a quadratic polynomial of $n$.
If $1<k\le n \le k^2$, $\alpha=\frac{1}{k-1}$, then the coefficient of $n^2$ is $-2(\alpha k-1)(2\alpha k-3)=\frac{2(k-3)}{(k-1)^2}$, which is non-negative when $k\geq 3$, which means $c_3$ is a convex function of $n$ when $k\geq 3$.
We first consider the case $k\geq 3$. In this case, it suffices to prove that both $c_3|_{n=k}$ and $c_3|_{n=k^2}$ are non-positive.
We have that
\begin{equation*}
\begin{aligned}
c_3|_{n=k}=-\frac{k^2(k-2)(2k-3)}{k-1}<0,~~c_3|_{n=k^2}=-\frac{k^3(9k-16)}{k-1} < 0.
\end{aligned}
\end{equation*}
In the case  $k=2$, we have $\alpha=1$  and $c_3 =-2n(n-2)\leq 0$.

Similarly, for $c_4$, we can regard it as a quadratic polynomial of $n$.
Note that the coefficient of $n^2$ is $2(\alpha k-1)^2$, which means that $c_4$ is a convex function of $n$.
For any $k$ and $\alpha$, the maximum of $c_4$ is attained at either $n=k$ or $n=k^2$. We only need to prove that both $c_4|_{n=k}$ and$c_4|_{n=k^2}$ are non-positive.
If $1<k\le n \le k^2$, $\alpha=\frac{1}{k-1}$, then we have
\begin{equation*}
\begin{aligned}
c_4|_{n=k}=-\frac{5k^2(k-2)}{k-1}\leq 0,~~c_4|_{n=k^2}=-\frac{5k^3}{k-1}\leq0.
\end{aligned}
\end{equation*}

For $c_5$, if $1<k\le n \le k^2$, $\alpha=\frac{1}{k-1}$, then we have $c_5=\frac{4k(n-k^2)}{(k-1)^2}\leq 0$.

Therefore,  we have proved that if $1<k\le n \le k^2$, $\alpha=\frac{1}{k-1}$, then $c_i \le0$, for $i=0,1,\cdots,6$. Consequently, we obtain that the polynomial $Q$ defined by \eqref{Q} is non-positive for any $x>0$ if $1<k\le n \le k^2$ and $\alpha=\frac{1}{k-1}$.
\end{proof}

For general cases, we apply Sturm's theorem to prove the following proposition.
\begin{prop}\label{thmA2}
For each $n$ and any fixed $k$ with $1\leq k\leq n$, we can find a constant $ c_0(n,k) >\frac{1}{k}$ such that
$Q$ defined by \eqref{Q} is non-positive for any $x>0$ and $\alpha= c_0(n,k) $.
\end{prop}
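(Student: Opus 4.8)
The plan is to deduce Proposition~\ref{thmA2} from the base case $\alpha=1/k$ by a perturbation argument, and then to invoke Sturm's theorem only to pin down the optimal value of $c_0(n,k)$ (and to prove the effective bounds in Propositions~\ref{thmA1} and \ref{thmA3}--\ref{thmA4}). The starting point is conclusion~(i) of \S\ref{sec3.2}, namely
$$
Q\!\left(x,k,n,\tfrac1k\right)=-2\bigl(x^2+(k-1)x+(n-k)\bigr)^3 .
$$
Since $k\ge1$ and $n\ge k$, the quadratic $x^2+(k-1)x+(n-k)$ is a sum of nonnegative terms with positive leading term, hence strictly positive for every $x>0$; thus $Q(\cdot,k,n,1/k)<0$ on $(0,\infty)$, while $Q(0,k,n,1/k)=-2(n-k)^3$ is strictly negative when $n>k$ and equals $0$ when $n=k$. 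Regarding $Q$ as a polynomial in the pair $(x,\alpha)$ with polynomial (hence continuous) coefficients $c_0(\alpha),\dots,c_6(\alpha)$, I would show that strict negativity on $(0,\infty)$ persists for $\alpha$ slightly larger than $1/k$; any such $\alpha$ may then be taken as $c_0(n,k)$.

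The only real issue is the non-compactness of $(0,\infty)$, and it is handled by the two extreme coefficients of $Q$. The leading coefficient $c_6(\alpha)=k^2(\alpha(k-1)-1)(\alpha(k+2)-1)$ satisfies $c_6(1/k)=-2<0$, so there is $\eta>0$ with $c_6(\alpha)\le-1$ on $[1/k,1/k+\eta]$, while all $c_i(\alpha)$ are bounded there, say by $M$; hence there is $X_+<\infty$, independent of $\alpha$ in this range, with $Q(x,k,n,\alpha)\le-x^6+M(x^5+\cdots+1)<0$ for all $x\ge X_+$. When $n>k$ the constant term $c_0(\alpha)=(\alpha k+1)(k-n)^3$ satisfies $c_0(1/k)=-2(n-k)^3<0$, so after shrinking $\eta$ one has $c_0(\alpha)\le-\delta<0$ on $[1/k,1/k+\eta]$, which forces $Q(x,k,n,\alpha)<0$ for all $x\in(0,X_-]$, uniformly in $\alpha$, for some $X_->0$. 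Finally $Q$ is uniformly continuous on the compact set $[X_-,X_+]\times[1/k,1/k+\eta]$ and $\max_{x\in[X_-,X_+]}Q(x,k,n,1/k)<0$, so $Q(x,k,n,\alpha)<0$ on $[X_-,X_+]$ once $\alpha$ is close enough to $1/k$; combining the three ranges of $x$ gives $Q(x,k,n,\alpha)<0$ for all $x>0$. The borderline case $n=k$ (where $n\ge3$ forces $k\ge3$) needs one extra step: there the coefficients $c_0,c_1,c_2$ vanish identically, so $Q=x^3R(x,\alpha)$ with $R$ cubic in $x$, leading coefficient again $c_6(\alpha)$, and $R(x,1/k)=-2(x+k-1)^3<0$ on $[0,\infty)$; the same large-$x$ bound together with uniform continuity on $[0,X_+]$ gives $R(x,\alpha)<0$ there for $\alpha$ near $1/k$, hence $Q=x^3R\le0$ on $(0,\infty)$.

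This already establishes the existence asserted in Proposition~\ref{thmA2}. To obtain the sharp constant I would instead define $c_0(n,k)$ to be the supremum of all $\alpha\ge1/k$ for which $Q(\cdot,k,n,\alpha)\le0$ on $(0,\infty)$; by the convexity of $Q$ in $\alpha$ proved in \S\ref{sec3.2}, $Q(\cdot,k,n,\alpha)\le0$ on $(0,\infty)$ then holds for every $\alpha\in[1/k,c_0(n,k)]$, which is exactly what is needed in Theorem~\ref{thm3.1}. For any rational $\alpha$, $Q(\cdot,k,n,\alpha)$ has rational coefficients, so Sturm's theorem computes the exact number of its distinct real roots in $(0,\infty)$; together with sign evaluations at finitely many rational points this decides whether $Q\le0$ there, and a bisection in $\alpha$ locates $c_0(n,k)$ to any prescribed accuracy (it is in fact algebraic, being cut out by the discriminant/resultant of $Q$). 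This is how the tabulated values $c_0(3,1)=3.64\ldots$, $c_0(4,1)=2.93\ldots$ are produced.

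I expect that the existence statement proper is not the difficulty; the main obstacle is carrying out the Sturm bookkeeping in a way that is \emph{uniform in the two integer parameters} $n,k$, so as to prove $c_0(n,k)=\tfrac1{k-1}$ for $1<k\le n\le k^2$ (Proposition~\ref{thmA1}, where one checks that every coefficient $c_i$ is already $\le0$ at $\alpha=\tfrac1{k-1}$) and the lower bounds $c_0(n,1)\ge1+\tfrac7n$ and $c_0(n,k)\ge\tfrac1k+\tfrac{k}{(k-1)n}$ for $k\ge2$, $n\ge k^2$ (Propositions~\ref{thmA3}--\ref{thmA4}); this requires estimating the Sturm remainders polynomial-in-$(n,k)$ rather than numerically, and controlling the positivity/negativity patterns of the $c_i$ as $\alpha$ moves off $1/k$.
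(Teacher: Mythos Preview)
Your perturbation argument is correct and is genuinely different from what the paper does. The paper's proof of Proposition~\ref{thmA2} is really a description of a computational procedure: it observes that $Q(0,k,n,\alpha)<0$, reduces the question to showing that the Sturm count $\sigma(0)-\sigma(\infty)$ vanishes for some $\alpha>1/k$, and then \emph{describes} a bisection algorithm in $\alpha$ (with Sturm's theorem at each step) that locates such an $\alpha$ for any given $(n,k)$. No a~priori existence argument is given; the paper implicitly relies on the fact that the algorithm will succeed because the base case $\alpha=1/k$ works. (Incidentally, the paper's sentence ``it is obvious that $Q<0$ if $x=0$'' is literally false when $n=k$, a case you handle correctly by factoring out $x^3$.)

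Your approach, by contrast, gives a self-contained soft proof of existence: strict negativity of $Q(\cdot,k,n,1/k)$ on $(0,\infty)$, together with control of the leading and constant coefficients near $\alpha=1/k$, propagates by continuity to some open interval $\alpha\in[1/k,1/k+\eta]$. This is cleaner as a mathematical argument and makes the role of Sturm's theorem purely quantitative (pinning down the optimal $c_0(n,k)$), exactly as you say. What the paper's approach buys in return is that the very same machinery (Sturm sequences, now with coefficients polynomial in $n$ or in $(n,k)$) is what drives the effective bounds in Propositions~\ref{thmA3}--\ref{thmA4}; your last paragraph correctly identifies this uniform-in-parameters Sturm bookkeeping as the real work of the appendix.
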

\begin{proof}
First, we define a standard sequence of a polynomial $p(x)\in\mathbb{R}[x]$ of positive degree (cf. Chapter 5.2 of  \cite{N1985BOOK}) by applying Euclid's algorithm to $p(x)$ and $p'(x)$:
\begin{equation}\label{ssp}
\begin{aligned}
p_0(x) := & p(x), \\
p_1(x) := & p'(x), \\
p_2(x):= & -\text{rem}(p_0,p_1), \\
p_3(x):= & -\text{rem}(p_1,p_2), \\
\vdots &  \\
0= & -\text{rem}(p_{m-1},p_m),
\end{aligned}\end{equation}
where $\text{rem}(p_i,p_j)$ is the polynomial remainder of the polynomial  long division of $p_i$ by $p_j$.
We call the above sequence of polynomials \textit{the standard Sturm sequence} of $p(x)$.
We will apply the following theorem.

\textbf{Sturm's theorem:} (cf. \cite{N1985BOOK})
Let $p_0(x), \dots, p_m(x)$ be the standard Sturm sequence of  a  polynomial  $p(x)\in\mathbb{R}[x]$ with positive degree. Assume that $[a,b]$ is an interval such that $p(a)p(b)\neq0$,  and let $\sigma(\xi)$ denote the number of sign changes (ignoring zeroes) in the sequence
$$
\{p_0(\xi),p_1(\xi),\dots,p_m(\xi)\},
$$
then the number of distinct roots of $p(x)$ in $(a,b)$ is $\sigma(a)-\sigma(b)$.

For each $n$ and any fixed $k$ with $1\leq k\leq n$, it is obvious that $Q<0$ if $x=0$.
Therefore, in order to prove Proposition \ref{thmA2}, by applying  Sturm's theorem, we only need to prove that for each $n$ and any fixed $k$ with $1\leq k\leq n$, we can find a constant $ c_0(n,k) >\frac{1}{k}$ such that the number $\sigma(0)- \sigma(\infty)$ of  the polynomial $Q(x, k, n,  c_0(n,k) $ equals $0$.
We will describe how to use the computer program Mathematica to help us to find the constant $ c_0(n,k) $, by using a method of bisection and applying Sturm's theorem. We can use Mathematical algorithm to run the following procedure:
For each $n$ and fixed $k$ with $1\leq k\leq n$, we fix an arbitrary precision $\delta$ and set the initial data as follows.
 	\[
	\alpha^{\text{initial}}_{min} =\frac{1}{k}, \alpha^{\text{initial}}_{max}=6 ~~\text{for}~~ k=1, \alpha^{\text{initial}}_{max}=\frac{1}{k-1}+\delta ~~\text{for}~~ k\geq 2.
	\]
	Whenever $\alpha_{max}-\alpha_{min} \ge \delta$, we do the following loop:\\
	(1) Set $\alpha_{test}=\frac{1}{2}(\alpha_{max}+\alpha_{min})$.\\
	(2) Use Euclid's algorithm to compute the Sturm sequence for the polynomial $Q(x, k, n, \alpha_{test})$.\\
	(3) Compute $\sigma(0)- \sigma(\infty)$ of  the polynomial $Q(x, k, n, \alpha_{test})$,  if $\sigma(0)- \sigma(\infty)=0$, then we set $\alpha_{min}=\alpha_{test}$, otherwise, we set $\alpha_{max}=\alpha_{test}$.\\	
Once the loop ends, we obtain two constants $\alpha_{min}(n,k)$ and $\alpha_{max}(n,k)$ which satisfy that $\alpha_{max}(n,k)-\alpha_{min}(n,k)$ is less than the given precision $\delta$, and the number $\sigma(0)- \sigma(\infty)$ of  the polynomial $Q(x, k, n, \alpha_{min}(n,k)$ equals $0$.

Therefore, $ c_0(n,k) =\alpha_{min}(n,k)$ is the constant we seek for.
\end{proof}

We list some values of  $ c_0(n,k) $ for some specific $k$, $n$, with precision $\delta=0.01$:
\begin{equation}\label{listc0}
\begin{aligned}
& c_0(3,1)=3.64...,~ c_0(4,1)=2.93...,~ c_0(5,1)=2.56...,~ c_0(6,1)=2.33...,\\
& c_0(7,1)=2.17...,~ c_0(8,1)=2.05...,~ c_0(9,1)=1.96...,~ c_0(10,1)=1.89...,\\
& c_0(11,1)=1.83...,~ c_0(12,1)=1.78...,~ c_0(3,2)= c_0(4,2)=1.
\end{aligned}
\end{equation}
In the following, we give more details of how to apply Sturm's theorem by proving the following estimate for $ c_0(n,1)$.

\begin{prop}\label{thmA3}
Let $p_0(x), \dots, p_m(x)$ be the Sturm sequence  given by \eqref{ssp} for the polynomial $p(x)=Q(x,1,n, \alpha)$ defined by \eqref{Q} with regard to $x$, and let $\sigma(\xi)$ denote the number of sign changes (ignoring zeroes) in the sequence
\[
\{p_0(\xi),p_1(\xi),\dots,p_m(\xi)\}.
\]
If $n > 12$ and $\alpha=1+\frac{7}{n}$, then $\sigma(0)- \sigma(\infty) = 0$, which means that $Q(x,1,n, \alpha)$ has no root on $(0,\infty)$ if $\alpha=1+\frac{7}{n}$. Consequently, we obtain that $Q(x,1,n, 1+\frac{7}{n}) < 0$ for all $x\in (0,\infty)$ and $n>12$, since $Q<0$ when $x=0$.
If $3\le n\le 12$, one can easily check that $ c_0(n,1)\ge 1+\frac{7}{n}$.
Therefore, we have  $ c_0(n,1)\geq 1+\frac{7}{n}$.
\end{prop}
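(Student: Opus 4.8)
The idea is to run the Sturm test of Proposition~\ref{thmA2} explicitly for $k=1$ and $\alpha=1+\frac7n$, keeping $n$ as a free parameter. First I would substitute $k=1$ and $\alpha=1+\frac7n$ into the sextic $Q$ of \eqref{Q} and clear denominators by multiplying by $n^2$, obtaining a polynomial $\widehat Q(x)=n^2\,Q(x,1,n,1+\tfrac7n)=\sum_{j=0}^{6}a_j(n)\,x^j$ whose coefficients $a_j(n)$ are explicit polynomials in $n$. A direct computation gives that the leading coefficient $a_6(n)=-n(2n+21)$ is negative and that $a_0(n)$ has the sign of $(\alpha+1)(1-n)^3<0$ for $n>1$; since $\widehat Q$ differs from $Q(\cdot,1,n,1+\frac7n)$ only by the positive factor $n^2$, it suffices to prove that $\widehat Q$ has no zero in $(0,\infty)$. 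Because $\widehat Q(0)\neq0$ and $a_6(n)\neq0$, Sturm's theorem reduces this to checking $\sigma(0)=\sigma(\infty)$ for the standard Sturm sequence of $\widehat Q$.

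Next I would compute the standard Sturm sequence $p_0=\widehat Q,\ p_1=\widehat Q',\ p_2=-\operatorname{rem}(p_0,p_1),\dots$ from \eqref{ssp}; each $p_i$, after clearing the positive denominators produced by Euclid's algorithm, is again a polynomial in $x$ with coefficients that are polynomials in $n$. Two pieces of sign data are then needed: the signs of $p_0(0),p_1(0),\dots,p_m(0)$, each a polynomial in $n$, for $n>12$; and the signs of the leading coefficients of $p_0,\dots,p_m$, which give the signs of the $p_i$ at $x=+\infty$, again for $n>12$. For each of these finitely many univariate polynomials in $n$ one shows that it keeps a constant sign on $(12,\infty)$ --- for instance by checking that after the shift $n\mapsto n+13$ all of its coefficients have the same sign, or by a crude a priori bound on its largest real root, or by one more application of Sturm's theorem to the polynomial in $n$. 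Having fixed all these signs, I would read off the two sign patterns, count $\sigma(0)$ and $\sigma(\infty)$, and verify that they coincide, so that $\sigma(0)-\sigma(\infty)=0$. By Sturm's theorem $\widehat Q$, hence $Q(\cdot,1,n,1+\frac7n)$, has no root in $(0,\infty)$, and together with $Q(0,1,n,1+\frac7n)<0$ and continuity this yields $Q(x,1,n,1+\frac7n)<0$ for all $x>0$ and all $n>12$.

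For the remaining values $3\le n\le12$ no new computation is required: by the convexity of $Q$ in $\alpha$ established in the proof of Theorem~\ref{thm3.1}, together with $Q(x,1,n,1)<0$, negativity of $Q(\cdot,1,n,\alpha)$ on $(0,\infty)$ at $\alpha=c_0(n,1)$ forces negativity for every $\alpha\in[1,c_0(n,1)]$; since $1+\frac7n<c_0(n,1)$ for each $n\in\{3,\dots,12\}$ by the bisection values recorded in \eqref{listc0}, we get $Q(\cdot,1,n,1+\frac7n)<0$ on $(0,\infty)$, hence $c_0(n,1)\ge 1+\frac7n$. Combining the two ranges gives $c_0(n,1)\ge 1+\frac7n$ for all $n\ge3$.

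\textbf{Main difficulty.} No single step is conceptually hard; the work lies in the bookkeeping. The polynomial remainders in Euclid's algorithm grow in degree and their coefficients become increasingly involved polynomials in $n$, and the delicate point is to show that each coefficient entering the evaluations at $x=0$ and $x=+\infty$ has a fixed sign on the \emph{unbounded} interval $(12,\infty)$ rather than on a compact set --- i.e.\ to rule out a sign change of some auxiliary polynomial in $n$ for large $n$ --- and to check that the threshold $12$ is large enough for all the sign patterns to stabilize. I expect verifying these uniform-in-$n$ sign conditions to be the bulk of the argument; it is exactly this kind of verification that is best delegated to a computer algebra system, as indicated in the proof of Proposition~\ref{thmA2}.
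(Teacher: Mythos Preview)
Your plan is correct and is essentially the same approach the paper takes: compute the Sturm sequence of $Q(x,1,n,1+\tfrac7n)$ with $n$ kept symbolic, extract the signs of the constant terms $Z_i$ and leading coefficients $I_i$ (each a polynomial in $n$), verify that every $Z_i$ and $I_i$ has constant sign for $n>12$ by a secondary application of Sturm's theorem in the variable $n$, and read off $\sigma(0)=\sigma(\infty)=3$; for $3\le n\le 12$ the paper simply appeals to the tabulated bisection values, which is exactly your convexity-plus-\eqref{listc0} argument made explicit. The only cosmetic difference is that the paper clears positive factors from the $p_i$ after computing them rather than multiplying $Q$ by $n^2$ at the outset, but this does not affect the sign analysis.
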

\begin{proof}
When $n > 12$ and $\alpha=1+\frac{7}{n}$, we can divide the standard Sturm sequence of $Q(x,1,n, 1+\frac{7}{n})$ by some suitable positive functions to obtain a simpler Sturm sequence, as  Sturm's theorem only concerns the sign of each term in the Sturm sequence.
We still denote the simpler Sturm sequence by
\[
\{p_0(x),p_1(x),\dots,p_m(x)\}.
\]
Although the expression of $p_i(x)$ might be very complicated, we only need to know the signs of $p_i(0)$ and $p_i(\infty)$, that is, the signs of the zero-order terms and the
coefficients of the highest order terms of $p_i(x)$. We divide the  zero-order terms and the highest order terms of $p_i(x)$ by some suitable positive functions, and denote the remaining terms by  $Z_i$ and $I_i$, respectively. Therefore, $Z_i$ has the same sign as the zero-order term of $p_i(x)$ and $I_i$ has the same sign as the coefficient of the highest order term of $p_i(x)$.  We have
\begin{equation*}\begin{aligned}
Z_0=&7 - 19 n + 15 n^2 - n^3 - 2 n^4,~Z_1=1,\\
Z_2=&2744 n - 12348 n^2 + 18172 n^3 - 8453 n^4 - 1794 n^5 + 1535 n^6 +
 144 n^7,\\
 Z_3=&16672544 n - 60658864 n^2 + 78969576 n^3 - 38201184 n^4 -
 2317896 n^5 \\
 &+ 6372732 n^6 - 576295 n^7 - 270278 n^8 + 6081 n^9 +
 3584 n^{10},\\
  Z_4=&2529924096 - 11497601472 n + 20565314112 n^2 - 18321051392 n^3 +
 8896937056 n^4\\
 &- 2856559664 n^5 + 619264184 n^6 + 142496512 n^7 -
 32213404 n^8- 29801085 n^9  \\
 &- 21819236 n^{10} + 4720867 n^{11} +
 725886 n^{12} - 296460 n^{13} - 40000 n^{14},\\
   Z_5=&-165288374272 + 822014504192 n - 1439948464640 n^2 +
 635221775104 n^3 \\
 &+ 1099756498624 n^4 - 1442680610560 n^5 +
 317014594400 n^6 + 331846621568 n^7\\
 &- 142328426016 n^8 -
 34283941676 n^9 + 15836869820 n^{10} + 3675343420 n^{11}\\
 & - 667512847 n^{12} - 214699395 n^{13} + 31704806 n^{14} + 13123168 n^{15} +
 994304 n^{16},
 \end{aligned}\end{equation*}
\begin{equation*}\begin{aligned}
Z_6=&330576748544 - 422075670016 n - 593003666752 n^2 + 717369012864 n^3 \\
 &+ 326600077888 n^4 - 375952652096 n^5 - 61529189456 n^6 \\
 &+ 54892083792 n^7 + 9999889760 n^8 - 2733091200 n^9 - 549429077 n^{10}\\
 &+ 149191472 n^{11} + 43911424 n^{12} + 2985984 n^{13}= I_6,\\
  I_0=&-1, ~I_1=-1,~
  I_2=-192080 + 263424 n - 60368 n^2 - 13272 n^3 - 53 n^4 + 144 n^5, \\
 I_3=&90354432 - 180708864 n + 96693072 n^2 - 9686320 n^3 + 6803552 n^4 -
 3159968 n^5 \\
 &+ 71104 n^6 + 240196 n^7 + 8186 n^8 - 2400 n^9,\\
 I_4=&2529924096 - 16557449664 n + 37348649856 n^2 - 36713556544 n^3 +
 13721909824 n^4\\
 &- 526931104 n^5  + 1003345392 n^6 - 687439728 n^7 -
 136384936 n^8 + 21404941 n^9 \\
 &+ 7869536 n^{10} - 1740356 n^{11} -
 780868 n^{12} - 68800 n^{13},\\
  I_5=&165288374272 - 822014504192 n + 1463561089536 n^2 -
 741689414144 n^3 \\
 &- 920500835072 n^4 + 1357950741952 n^5 -
 451152740416 n^6 - 132175466304 n^7 \\
 &+ 73039703968 n^8 +
 20211061820 n^9 - 9966788912 n^{10} - 1997678860 n^{11} \\
 & +392467304 n^{12} + 110406478 n^{13} - 19057514 n^{14} - 7572032 n^{15} -
 594432 n^{16}.
 \end{aligned}\end{equation*}
Note that for sufficiently large $n$, the signs of $\{Z_i\}$ are
$
-,+,+,+,-,+,+,
$
and the signs of $\{I_i\}$ are
$
-,-,+,-,-,-,+.
$
This implies that $\sigma(0)= \sigma(\infty)=3$. So it remains to show that for any $i\in\{0,1,\cdots,6\}$, all real roots of $Z_i$ and $I_i$ are not greater than $12$.
Since  $Z_i$ and $I_i$ are unary polynomials, this can be proved directly  by applying  Sturm's theorem.
For example, we show how to apply Sturm's theorem to $I_2$:
\[
I_2=-192080 + 263424 n - 60368 n^2 - 13272 n^3 - 53 n^4 + 144 n^5.
\]
First, we can obtain a Sturm sequence  by using  Euclid's algorithm \eqref{ssp} and removing some positive coefficients to get a simpler Sturm sequence $\{q_i\}$. Then we have
\begin{equation}\begin{aligned}
q_0(n)=&-192080 + 263424 n - 60368 n^2 - 13272 n^3 - 53 n^4 + 144 n^5, \\
q_1(n)=&263424 - 120736 n - 39816 n^2 - 212 n^3 + 720 n^4, \\
q_2(n)=&169381632 - 188065528 n + 33126282 n^2 + 4780729 n^3, \\
q_3(n)=&-14501462505796+11364288885852 n-795070863791 n^2, \\
q_4(n)=&11296812839226538 - 7895204048274613 n,\\
q_5(n)=&-1.
\end{aligned}\end{equation}
Therefore the signs of $\{q_i(12)\}$ are
$
+,+,+,+,-,-,
$
and the signs of $\{q_i(\infty)\}$ are
$
+,+,+,-,$\\$-,-.
$
This implies that $\sigma(12)= \sigma(\infty)=1$. By applying Sturm's theorem, $I_2$ has no real root greater than $12$.
In a similar way, we obtain that all real roots of $Z_i$ and $I_i$ are not greater than $12$.
Hence, we obtain that the difference of the numbers of sign changes $\sigma(0)- \sigma(\infty)$ equals $0$ for  $Q(x,1,n, 1+\frac{7}{n})$ regarded as a polynomial of $x$, hence we obtain that   $Q(x,1,n, 1+\frac{7}{n}) < 0$ for all $x\in (0,\infty)$, since $Q<0$ when $x=0$.
\end{proof}

When $k\geq 2$, we have the following estimate for $ c_0(n,k) $.
\begin{prop}\label{thmA4}
If $k\geq 2$, $n\geq k^2$, then we have $ c_0(n,k)  \ge \frac{1}{k}+\frac{k}{(k-1)n}$.
\end{prop}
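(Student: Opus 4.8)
The plan is to reduce the estimate to a single polynomial positivity statement and then attack it with Sturm's theorem, exactly in the spirit of Propositions~\ref{thmA1} and \ref{thmA3}. Set $\alpha_0=\alpha_0(n,k):=\frac1k+\frac{k}{(k-1)n}$, which is $>\frac1k$. Recall from the proof of Theorem~\ref{thm3.1} (Case~1, $\epsilon=0$) that $Q(x,k,n,\alpha)$ is a convex quadratic polynomial in $\alpha$ for every fixed $x\ge 0$, and that $Q(x,k,n,1/k)=-2\big(n+(x-1)(k+x)\big)^3<0$ for all $x>0$. Hence, once we know $Q(x,k,n,\alpha_0)\le 0$ for all $x>0$, convexity in $\alpha$ forces $Q(x,k,n,\alpha)\le 0$ for all $x>0$ and all $\alpha\in[1/k,\alpha_0]$, so $\alpha_0$ is an admissible choice for the constant and $c_0(n,k)\ge\alpha_0$. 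Thus \emph{it suffices to prove $Q(x,k,n,\alpha_0)\le 0$ for all $x>0$, $k\ge 2$, $n\ge k^2$.} When $n=k^2$ one computes $\alpha_0=\frac1k+\frac1{(k-1)k}=\frac1{k-1}$, so this case is already covered by Proposition~\ref{thmA1}; we may therefore assume $n\ge k^2+1$, in which case $\alpha_0<\frac1{k-1}$ and the leading coefficient $k^2(\alpha_0(k-1)-1)(\alpha_0(k+2)-1)=k^2\cdot\tfrac{k^2-n}{kn}\cdot(\alpha_0(k+2)-1)$ of $Q(\cdot,k,n,\alpha_0)$ is strictly negative.

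Next I would clear denominators. Since $\alpha_0$ enters $Q$ polynomially of degree $2$, the quantity $\widehat Q(x,k,n):=\big(k(k-1)n\big)^2\,Q\big(x,k,n,\alpha_0\big)$ is a degree-$6$ polynomial in $x$ whose coefficients are polynomials in $k$ and $n$. Its constant term is $\big(k(k-1)n\big)^2(\alpha_0 k+1)(k-n)^3<0$ (as $k-n<0$ and $\alpha_0 k+1>0$), and, by the previous paragraph, its leading coefficient is negative for $n\ge k^2+1$. By Sturm's theorem it then remains to show that the standard Sturm sequence $p_0=\widehat Q,\ p_1=\partial_x\widehat Q,\ p_2=-\text{rem}(p_0,p_1),\dots$ of $\widehat Q$ with respect to $x$ (computed over $\mathbb{Q}(k,n)$ by Euclid's algorithm) satisfies $\sigma(0)=\sigma(+\infty)$; this gives $\widehat Q$ no root in $(0,\infty)$ and hence, together with $\widehat Q(0,k,n)<0$, the desired $\widehat Q(\cdot,k,n)<0$ on $(0,\infty)$. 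As in the proof of Proposition~\ref{thmA3}, I would divide each $p_i$ by an explicit positive factor and then record only the two bivariate polynomials $Z_i(k,n)$ and $I_i(k,n)$ carrying, respectively, the sign of the lowest-order coefficient $p_i(0)$ and the sign of the leading coefficient of $p_i$.

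The remaining work is to determine the signs of $Z_i$ and $I_i$ on the region $\{k\ge 2,\ n\ge k^2\}$ and to verify that the two sign patterns $(Z_0,\dots,Z_m)$ and $(I_0,\dots,I_m)$ have the same number of sign changes. The natural move is the substitution $n=k^2+t$ with $t\ge 1$, turning each $Z_i$ and $I_i$ into a polynomial in $k$ and $t$; the boundary value $t=0$ (i.e. $n=k^2$) again corresponds to Proposition~\ref{thmA1}, where $\alpha_0=\frac1{k-1}$ makes all coefficients of $Q$ manifestly non-positive, which pins down the Sturm signs on the boundary. For each $i$ one then shows the sign is constant for $t\ge 1$: either every $t$-coefficient of $Z_i(k,k^2+t)$, a polynomial in $k$, has the appropriate sign for $k\ge 2$ (checked by a further, now one-variable, application of Sturm's theorem on $[2,\infty)$), or, if such a coefficient changes sign in $k$, one applies Sturm's theorem in the single variable $n$ over $(k^2,\infty)$ with polynomial-in-$k$ coefficients and again reduces to one-variable sign checks in $k$. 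Assembling these yields $\sigma(0)=\sigma(+\infty)$, completing the proof.

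The main obstacle is precisely this last step: unlike in Proposition~\ref{thmA3}, where the $Z_i,I_i$ are univariate polynomials in $n$, here they are genuinely bivariate in $(k,n)$ of comparable (or larger) degree, and the region $\{k\ge 2,\ n\ge k^2\}$ is unbounded in both directions, so no direct sign inspection is available. The two structural facts that make the computation tractable are (i) the reparametrisation $n=k^2+t$, which simplifies the polynomials and identifies the boundary case with the already-proven Proposition~\ref{thmA1}, and (ii) the fact that the coefficient of $\alpha^2$ in $Q$ equals a positive multiple of $x^2(x-1)^2$ times a quadratic in $x$ with non-negative coefficients (established in the proof of Theorem~\ref{thm3.1}), which controls the degree-$2$-in-$\alpha$ contribution; with these in hand the argument becomes a finite, if lengthy, chain of Sturm-sequence computations, one variable at a time.
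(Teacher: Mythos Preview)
Your reduction is correct and matches the paper exactly: it suffices to show $Q(x,k,n,\alpha_0)\le 0$ for all $x>0$ when $\alpha_0=\frac1k+\frac{k}{(k-1)n}$, and the boundary case $n=k^2$ is precisely Proposition~\ref{thmA1}. From that point on, however, the paper takes a much more elementary route than your Sturm-sequence programme. Writing
\[
Q\big(x,k,n,\alpha_0\big)=\frac{1}{n^2(k-1)^2}\sum_{i=0}^{6}a_i\,x^i,
\]
the paper computes the $a_i$ explicitly as polynomials in $k$ and $n$ and shows directly that \emph{every} coefficient $a_i$ is non-positive whenever $k\ge 4,\ n\ge k^2+1$, or $k=3,\ n>15$, or $k=2,\ n>44$; the finitely many remaining pairs $(k,n)$ (namely $k=2,\ 4\le n\le 44$ and $k=3,\ 9\le n\le 15$) are handled by the bisection/Sturm check of Proposition~\ref{thmA2}. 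The only coefficient requiring this split is $a_5=(n-k^2)\big(n(-5k^3+17k^2-18k+6)+4k^4+6k^3-6k^2\big)$, which factors nicely and is the source of the thresholds $44$ and $15$.

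So the paper avoids your bivariate Sturm machinery entirely: instead of counting sign changes of a length-$7$ Sturm chain whose entries are polynomials in $(k,n)$, it simply checks the signs of the seven coefficients $a_0,\dots,a_6$, each of which is a low-degree polynomial in $n$ (with coefficients in $k$) whose sign for $n\ge k^2+1$ is easy to read off. Your approach would work in principle, but you correctly identify the bivariate sign analysis as the obstacle and do not actually carry it out; the paper's trick of proving all $a_i\le 0$ (modulo a finite check) sidesteps that obstacle altogether and yields a short, explicit argument.
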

\begin{proof}
First, from Proposition \ref{thmA1}, when $n=k^2$, we have $ c_0(n,k) =\frac{1}{k-1}= \frac{1}{k}+\frac{k}{(k-1)n}$.
Second, we can use the method of bisection as described in the proof of Proposition \ref{thmA2} to estimate $ c_0(n,2)$ and $c_0(n,3)$,  and obtain that $ c_0(n,2)\ge \frac{1}{2}+\frac{2}{n}$, $\forall ~n\in [4,44]$;  $ c_0(n,3)\ge \frac{1}{3}+\frac{3}{2n}$, $\forall ~n\in [9,15]$.
In the remaining cases, i.e., $k=2, n>44$, or $k=3,~n>15$, or $k\geq 4, ~n\ge k^2+1$,
in order to prove Proposition \ref{thmA4}, we only need to show that
\[
Q(x,k,n,\frac{1}{k}+\frac{k}{(k-1)n}) \le 0, \quad\forall ~ x>0.
\]
For convenience, we reduce $Q(x,k,n,\frac{1}{k}+\frac{k}{(k-1)n})$ to a simpler polynomial:
\[
Q(x,k,n,\frac{1}{k}+\frac{k}{(k-1)n})=\frac{1}{n^2(k-1)^2} \sum_{i=0}^{6}a_i x^i,
\]
where
\begin{equation*}
\begin{aligned}
a_0=&-n(n-k)^3(k-1)(2n(k-1)+k^2),\\
a_1=&-n(n-k)^2(k-1)(n(5k^2-12k+6)+k^2(4k-6)),\\
a_2=&-6n^4(k-1)^2+n^3(k-1)(-3k^3+22k^2-30k+6)\\
&+n^2 k (-3k^4+9k^3+11k^2-21k+6)+n k^3(6k^3-29k^2+26k-9)+k^5(k+3),
\end{aligned}
\end{equation*}
and
\begin{equation*}
\begin{aligned}
a_3=&-2n^3(k-1)(5k^2-12k+6)+n^2(k^5-5k^4-17k^3+37k^2-22k+2)\\
&+n k^2(-4k^4+37k^3-45k^2+32k-4)+2k^4(-2k^2-5k+1),\\
a_4=&-6n^3(k-1)^2+n^2(15k^3-37k^2+30k-6)\\
&+n k^2(k^4-22k^3+37k^2-42k+12)+6k^4(k^2+2k-1),\\
a_5=&(n-k^2)(n(-5k^3+17k^2-18k+6)+4k^4+6k^3-6k^2),\\
a_6=&-(k-1) (n-k^2) ((k+2) k^2+2 (k-1) n).
\end{aligned}
\end{equation*}

When $k\geq 2$, we have $5k^2-12k+6>0$, and it is obvious that $a_0<0$, $a_1< 0$ and $a_6< 0$ for $n\geq k^2+1$.
Fix any $j\in \{2,3,4\}$, we regard $a_j$ as a polynomial of $n$, and  we have that $a_j \to -\infty$ as $n \to \infty$. If we set $n=k^2+1$, then $a_j$ is a unary polynomial of $k$, and we can  prove that $a_j|_{n=k^2+1}< 0$ for $k\geq 2$ directly by applying  Sturm's theorem. Finally, we can apply Sturm's Theorem to show that $a_j$ (with regard to $n$) has no roots in $[k^2+1,\infty)$, hence $a_j< 0$, if $j\in \{2,3,4\}$, $k\geq 2$ and $n\geq k^2+1$.
For $a_5$, we need to discuss three cases.

(i) If $k=2$, then $a_5=-2(n-4)(n-44)$. Hence, if $n>44$, then $a_5< 0$.

(ii) If $k=3$, then $a_5=-6(n-9)(5n-72)$. Hence, if $n>15$, then $a_5< 0$.

(iii) If $k\ge4$, since $n\ge k^2+1$, we have
\begin{equation*}
\begin{aligned}
a_5 &\le(n-k^2)((k^2+1)(-5k^3+17k^2-18k+6)+4k^4+6k^3-6k^2)\\
&=-(n-k^2)(k(k-4)+2)(5k^3-k^2+3k-3)<0.
\end{aligned}
\end{equation*}

Therefore, we have proved that all the coefficients $a_0,\cdots,a_6$ are non-positive when $k=2, n>44$, or $k=3,~n>15$, or $k\geq 4, ~n\ge k^2+1$. Consequently, we obtain that
$$Q(x,k,n,\frac{1}{k}+\frac{k}{(k-1)n}) \le 0, \quad\forall ~ x>0,$$
when $k=2, n>44$, or $k=3,~n>15$, or $k\geq 4, ~n\ge k^2+1$.
This completes the proof of Proposition \ref{thmA4}.
\end{proof}

\end{document}